\documentclass{amsart}

\usepackage[english]{babel}

\usepackage{mathrsfs, mathtools, amssymb}

\usepackage{dsfont}

\usepackage[paper=a4paper, margin=3cm]{geometry}

\usepackage[breaklinks]{hyperref}       
\usepackage{url}            
\usepackage{booktabs}       
\usepackage{amsfonts}       
\usepackage{nicefrac}       
\usepackage{microtype}      
\usepackage{lipsum}

\usepackage{amsmath,amssymb,amsthm,amscd}
\usepackage{amsrefs}
\usepackage{tikz,tikz-cd}
\usetikzlibrary{arrows,arrows.meta}

\usepackage{graphicx}
\graphicspath{}
\DeclareGraphicsExtensions{.pdf,.png,.jpg,.jpeg}

\usepackage{yhmath}
\usepackage{enumitem}
\usepackage{breakurl}

\newcounter{stmcounter}[section]

\newcounter{thmMaincounter}

\newtheorem{formula}{}[section]
\newtheorem{proposition}[formula]{Proposition}
\newtheorem{corollary}[formula]{Corollary}
\newtheorem{lemma}[formula]{Lemma}
\newtheorem{theorem}[formula]{Theorem}
\newtheorem{theoremM}[thmMaincounter]{Theorem}

\theoremstyle{definition}

\newtheorem{definition}[formula]{Definition}
\newtheorem{example}[formula]{Example}

\theoremstyle{remark}

\newtheorem{remark}[formula]{Remark}

\newcommand{\Z}{\mathbb Z}
\newcommand{\R}{\mathbb R}

\newcommand{\Q}{\mathbb Q}
\newcommand{\N}{\mathbb N}
\newcommand{\F}{\mathbb F}

\newcommand{\lb}{\lbrace}
\newcommand{\rb}{\rbrace}
\newcommand{\la}{\langle}
\newcommand{\ra}{\rangle}

\DeclareMathOperator{\Tor}{Tor}

\DeclareMathOperator{\st}{star}

\DeclareMathOperator{\hoc}{hocolim}

\DeclareMathOperator{\Hm}{Hom}

\DeclareMathOperator{\Top}{Top}

\DeclareMathOperator{\Ext}{Ext}

\DeclareMathOperator{\srep}{srep}

\DeclareMathOperator{\chr}{char}
\DeclareMathOperator{\Tot}{Tot}

\renewcommand{\leq}{\leqslant}
\renewcommand{\geq}{\geqslant}
\renewcommand{\ge}{\geqslant}
\renewcommand{\le}{\leqslant}
\renewcommand{\phi}{\varphi}
\newcommand{\ot}{\leftarrow}

\newcommand{\Cc}{\mathbf{C}}
\newcommand{\Dd}{\mathbf{D}}
\newcommand{\Mm}{\mathbf{M}}

\newcommand{\Pp}{\mathbf{P}}
\newcommand{\Ab}{\mathbf{Ab}}
\newcommand{\sset}{\mathbf{sSet}}

\newcommand{\sab}{\mathbf{sAb}}
\renewcommand{\top}{\mathbf{Top}}

\def\k{\mathbf{k}}

\tikzcdset{arrow style=tikz, diagrams={>=stealth}}

\catcode`,\active

\catcode`\,12

\begin{document}
	
	\title[On formality of diagrams of Eilenberg-MacLane spaces]{On formality of diagrams of Eilenberg-MacLane spaces}
	
	\author{Grigory Solomadin}
	\address[G.\,Solomadin]{PU Marburg, Marburg, Germany}
	\email{grigory.solomadin@gmail.com}
	
	\author{Antoine Touz\'e}
	\address[A.\,Touz\'e]{Univ. Lille, CNRS, UMR 8524 - Laboratoire Paul Painlevé, F-59000 Lille, France}
	\email{antoine.touze@univ-lille.fr}
	
	\subjclass[2020]{55P62, 55U10, 18F50}
	\keywords{Eilenberg-MacLane space, formality, functor calculus, spectral sequence}
	
	\begin{abstract}
		In this paper, we establish formality (over $\Q$) for diagrams of Eilenberg-MacLane spaces of any height $n\geq 1$.
		This implies spectral sequence (over $\Q$) collapse at page $2$ for any diagram of EML spaces over any small category.
		We prove by functor calculus argument that formality does not hold over any fixed commutative ring $\k$ not containing $\Q$, where the category of diagrams is over the category generated by finite direct sums of a cyclic group.
	\end{abstract}
		
	\maketitle

\section{Introduction}

A space $X$ is called \emph{formal} over $\k$ if its singular chain complex $C_{*}(X;\k)$ with coefficients in $\k$ is connected by a zig-zag of quasi-isomorphisms to its singular homology $H_{*}(X;\k)$ (considered as a complex with zero differential). 
There are many variants of formality: one can consider singular chains or cochains, endow them with various additional structures (products, coproducts,\dots) and ask for a zig-zag of quasi-isomorphisms preserving the additional structures. 
More generally, one can consider \emph{formality of diagrams}. Namely, given a diagram of formal spaces, one can ask furthermore if the zig-zags of quasi-isomorphisms connecting the singular (co)chains of each object of the diagram to its (co)homology can be chosen in a way that is compatible with the maps of the diagram. If so, then one says that the diagram is formal (see definition~\ref{defi:formality} for a more precise formulation).

While formal spaces are not rare, formal diagrams of spaces are much more exceptional. For example, in rational homotopy theory, it is known that spheres, K\"ahler manifolds, Eilenberg-MacLane spaces, and many others are formal~\cite{FHT}. On the contrary, every map can be interpreted as a small diagram with one arrow and two vertices. By inspecting the Sullivan models, one can then see that the Hopf fibration $S^3\to S^2$ is not formal. More generally, it is known that a homologically trivial formal map must be nullhomotopic~\cite[Lemme~3.9]{le-si-81}.

In this article, we study the formality of diagrams of Eilenberg-MacLane spaces (EML spaces, for short). Every Eilenberg-MacLane space can be given the structure of a strict abelian group, and every diagram can be rigidified into a diagram in which the arrows preserve this abelian group structure (see proposition~\ref{pr:zigzagofdiagramsofEML}). This abelian group structure on spaces induces a commutative differential graded Hopf algebra structure on the level of singular chains, therefore we can ask about formality in this context. 
In the present note, we establish formality for any diagram of Eilenberg-MacLane spaces with fixed height $n\geq 1$ over any $\Q$-algebra $\k$ (see theorem~\ref{thm:formQ}).
\begin{theoremM}\label{thmm:formQ}
	For any $n\ge 1$ let $D\colon\Cc\to \sset_{*}$ be any diagram of EML spaces of height $n$.
	Let $\k$ be a $\Q$-algebra.
	Then the diagram $c\mapsto C_{*}(D(c);\k)$ is formal in the category of augmented commutative differential graded $\k$-algebras if $n=1$, and it is formal in the category of commutative differential graded Hopf $\k$-algebras if $n\ge 2$.     
\end{theoremM}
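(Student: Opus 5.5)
We first reduce to diagrams of strict simplicial abelian groups. By proposition~\ref{pr:zigzagofdiagramsofEML}, $D$ is connected by a zig-zag of objectwise weak equivalences to a diagram $c\mapsto K(A(c),n)$, where $A\colon \Cc\to\Ab$ is a functor and $K(-,n)$ is a functorial model, say $K(A,n)=\Gamma(A[n])$ via Dold--Kan. Singular chains take weak equivalences to quasi-isomorphisms of commutative dg Hopf algebras (or augmented cdgas). So it suffices to prove that the functor
\[ \Ab\to \mathrm{CDGH}_\k,\qquad A\mapsto C_*(K(A,n);\k) \]
is connected to $A\mapsto H_*(K(A,n);\k)$ by a zig-zag of \emph{natural} quasi-isomorphisms in the relevant category. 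Precomposing with $A$ then gives formality of every diagram at once.

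To build the natural zig-zag, we replace singular chains by normalized chains $N\k[K(A,n)]$ of the simplicial group algebra. This is a simplicial commutative Hopf algebra, and its normalization is a cdg Hopf algebra via the Eilenberg--Zilber shuffle map, naturally quasi-isomorphic to $C_*$. Since $\k\supseteq\Q$, we compare it with the functor $A\mapsto \Lambda_\k(A\otimes\k[n])$, the free graded-commutative algebra on $A\otimes\k$ placed in degree $n$, with primitive generators and zero differential. This is the classical rational homology of $K(A,n)$ for all abelian $A$, including torsion, where it is just $\k$.

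The natural comparison goes through the Dold--Kan linearization $\k[\Gamma(A[n])]\to \k\otimes\Gamma(A[n])$. Over $\Q$ the augmentation ideal $I$ of the group algebra has a natural filtration by powers, and the logarithm $\log(g)=\sum_{k\ge 1}(-1)^{k+1}(g-1)^k/k$ gives natural maps. After completing $I$-adically, Quillen's theorem identifies $\widehat{\k[G]}$ with the completed symmetric algebra $\widehat{\mathrm{Sym}}(G\otimes\k)$ as Hopf algebras, naturally in the abelian group $G$. Applied levelwise to $G=\Gamma(A[n])$, this gives a natural zig-zag
\[ \k[\Gamma(A[n])]\to \widehat{\k[\Gamma(A[n])]}\xleftarrow{\ \cong\ }\widehat{\mathrm{Sym}}(\k\otimes\Gamma(A[n]))\leftarrow \mathrm{Sym}(\k\otimes\Gamma(A[n])) \]
of simplicial commutative Hopf algebras. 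We then apply normalization and the shuffle map. The last term is quasi-isomorphic, naturally, to $\Lambda_\k(A\otimes\k[n])$: in characteristic zero, $N\mathrm{Sym}^k(V)\simeq \mathrm{Sym}^k(NV)$ via symmetrization, and $N(\k\otimes\Gamma(A[n]))\simeq A\otimes\k[n]$ by a natural map to the homology, because this complex is concentrated in degree $n$ up to homotopy. More precisely, $N\Gamma(A[n])=A[n]$ on the nose, so there is no choice involved.

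For $n=1$ the same chain of maps works, but the shuffle-map comparisons respect only the augmented cdga structure, compatibly with the paper's weaker conclusion there. Alternatively, if $\Gamma(A[1])$ is not handled well by completion, we use only the algebra structure.

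The main obstacle is the completion step. We must show that $\k[G_\bullet]\to\widehat{\k[G_\bullet]}$ and $\mathrm{Sym}\to\widehat{\mathrm{Sym}}$ are quasi-isomorphisms after normalization when $n\ge1$. We would argue via connectivity. For $n\ge 1$, the $k$-th filtration quotient $I^k/I^{k+1}\cong \mathrm{Sym}^k(G\otimes\k)$ is $(kn-1)$-connected, since by the Dold theorem $\mathrm{Sym}^k$ of an $(n-1)$-connected simplicial module is $(kn-1)$-connected. Hence $I^k$ is $(kn-1)$-connected, and in each homological degree the tower stabilizes, so completion does not change homology. The awkward point is torsion in $A$, where $G\otimes\k$ has torsion killed; the filtration argument is insensitive to this. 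If the Hopf compatibility of the shuffle-map zig-zag causes trouble, we would instead work throughout with simplicial Hopf algebras and apply normalization only at the end.
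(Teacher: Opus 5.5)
Your route (levelwise completion and $\log/\exp$, then symmetrization) differs from the paper's iterated bar constructions. After the repair in the second paragraph it gives a natural zig-zag in $\mathrm{ACDGAlg}_\k$ for every $n$, but it does not give formality in $\mathrm{CDGHopf}_\k$ for $n\ge 2$, which is what the statement claims. The levelwise completions $\widehat{\k[G_m]}\cong\widehat{\mathrm{Sym}}(G_m\otimes\k)$ are only \emph{complete} Hopf algebras: their coproduct lands in the completed tensor product. So $N\widehat{\k[G_\bullet]}$ and $N\widehat{\mathrm{Sym}}(V)$ are not objects of $\mathrm{CDGHopf}_\k$, and your zig-zag leaves the category. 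Normalization does not fix this. Take $A=\Z$ with generator $\sigma$ in simplicial degree $n$, and choose degeneracies $u,w$ of $\sigma$ in degree $2n$ whose front $n$-faces are $0,\sigma$ and whose back $n$-faces are $\sigma,0$. Then the Alexander--Whitney coproduct of $\sum_{a\ge 1}(uw)^a\in N_{2n}\widehat{\mathrm{Sym}}(V)$ has $N_n\otimes N_n$-component $\sum_{a\ge 1}\sigma^a\otimes\sigma^a$, which is not a finite sum of tensors.

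This also means your explanation of the weaker conclusion at $n=1$ is misplaced. Normalization with shuffle and Alexander--Whitney maps produces dg Hopf algebras at every height; what confines you to algebras is the completion, at every height. Your fallback of working with simplicial Hopf algebras until the end does not help either, since the completed objects are not simplicial Hopf algebras.

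The missing idea is the paper's: the bar construction turns quasi-isomorphisms of augmented cdgas into quasi-isomorphisms of dg Hopf algebras. So one only needs an algebra-level statement at height one, namely the explicit natural map $e\colon\overline{B}\k[G]\to\Lambda(G\otimes_\Z\k[1])$, which is not a coalgebra map. Then $\overline{B}$, the Hopf quasi-isomorphisms $i_\Lambda$, $i_\Gamma$ of lemma~\ref{lm:formalitySLambda}, and $\Gamma\cong S$ over $\Q$ propagate Hopf formality to all $n\ge 2$.

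Separately, your argument that $\k[G_\bullet]\to\widehat{\k[G_\bullet]}$ is a quasi-isomorphism fails as written. Connectivity of the graded pieces $I^k/I^{k+1}\cong\mathrm{Sym}^k(G\otimes\k)$ gives no control on $\bigcap_k I^k$, and the $I$-adic filtration is not separated in general. For torsion $A$, one has $I^k=I\neq 0$ levelwise, all graded pieces vanish, and $\widehat{\k[G_\bullet]}=\k$. In that case your claim is exactly the nontrivial vanishing of $\tilde H_*(K(A,n);\k)$, so the filtration argument is not insensitive to torsion; it is blind to it.

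This is repairable with the classical input you already quote. Your argument for $\mathrm{Sym}\to\widehat{\mathrm{Sym}}$ is valid, so $H_*(N\widehat{\k[G_\bullet]})\cong H_*(N\mathrm{Sym}(V))\cong\Lambda_\k(A\otimes\k[n])$. The classical computation says $H_*(K(A,n);\k)$ is the same free graded-commutative algebra. Your comparison is an algebra map, and on the degree $n$ generators it is the identity of $A\otimes\k$ via Hurewicz: $\log g\equiv g-1$ modulo $I^2$, and the higher-weight components are boundaries.

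Finally, singular chains turn only homomorphisms of simplicial abelian groups into algebra or Hopf maps. Your reduction must therefore start from a diagram of abelian EML spaces and use lemma~\ref{lm:2}, because the map $\alpha^n$ in proposition~\ref{pr:zigzagofdiagramsofEML} is not a homomorphism.
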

The proof of formality relies on simplicial models for EML spaces using iterated bar $\overline{B}$ and $\overline{W}$ constructions~\cite{gu-ma-74}. If $n=1$ or $2$, we actually extend theorem \ref{thmm:formQ} to more general rings $\k$ in theorem~\ref{thm:formsmalldim}, provided the homotopy groups of the EML-spaces of the diagram are ``not too large with respect to $\k$''. For example, formality holds with coefficients $\k=\Z[1/k!]$, if one restricts to diagrams of Eilenberg-MacLane spaces with $\pi_{n}$ of $\k$-dimension $\leq k$, see example~\ref{ex:kfact}.

The formality results given in theorems \ref{thm:formQ} and \ref{thm:formsmalldim} seem to be folklore albeit not published in this generality to the knowledge of the authors (the case of tori was addressed in~\cite{fr-21}, and the integral formality of certain diagrams of EML spaces was addressed in~\cite{no-ra-05}).

We review an application of formality to the study of homotopy colimits in section~\ref{sec:app}. Namely, for all diagrams of spaces $D$ with shape $\Cc$ there are two first quadrant spectral sequences with the second pages given by (co)homology of the category $\Cc$ with coefficients in the singular (co)homology of $D$, and converging to the singular (co)homology of the homotopy colimit of $D$:
\[
	E^{2}_{p,q}(D)=H_{p}(\Cc;H_{q}(D;\k))\Rightarrow H_{p+q}(\hoc D;\k),
\]
\[
	E_{2}^{p,q}(D)=H^{p}(\Cc;H^{q}(D;\k))\Rightarrow H^{p+q}(\hoc D;\k).
\]
As a consequence of theorem~\ref{thmm:formQ}, we prove the following result (and actually a bit more, by providing an explicit model for the chains of the homotopy colimit) in theorem~\ref{thm:rcollf}.	

\begin{theoremM}\label{thmm:rcollf}
    Assume that $\k=\Q$. Let $n$ be a positive integer, and let $D$ be a diagram of pointed spaces. If every space $D(c)$ of the diagram is a nilpotent space with an abelian fundamental group, such that $\pi_k(D(c))\otimes\Q=0$ for $k\ne n$, then both spectral sequences collapse at the second page.
\end{theoremM}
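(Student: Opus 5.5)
The plan is to reduce to Theorem~\ref{thmm:formQ} via rationalization, and then derive the collapse from formality of the chain-level diagram.

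\emph{Reduction to EML diagrams.} Since $\k=\Q$, we may replace $D$ by its objectwise rationalization. Each $D(c)$ is nilpotent, so there is a functorial rationalization $D(c)\to D(c)_{\Q}$; for instance, Bousfield--Kan $\Q$-completion is functorial and agrees with rationalization on nilpotent spaces. This gives a natural map of diagrams $D\to D_{\Q}$ which is an objectwise rational homology equivalence. Because hocolim preserves objectwise homology equivalences, both spectral sequences for $D$ and $D_{\Q}$ are isomorphic from $E^2$ on. By hypothesis $\pi_k(D(c))\otimes\Q=0$ for $k\neq n$, so each $D(c)_{\Q}$ is an EML space $K(\pi_n(D(c))\otimes\Q,n)$. (For $n=1$ the abelian fundamental group hypothesis is what makes this legitimate.) Proposition~\ref{pr:zigzagofdiagramsofEML} then lets us replace $D_{\Q}$ by a zigzag of objectwise weak equivalences of diagrams with a diagram of simplicial abelian groups that are EML spaces of height $n$. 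Since objectwise weak equivalences induce isomorphisms of both spectral sequences, we may assume $D$ is such a diagram.

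\emph{Formality implies collapse.} The chains of the homotopy colimit are computed by the simplicial replacement: $C_*(\hoc D;\Q)$ is the total complex of the double complex $\bigoplus_{c_0\to\cdots\to c_p}C_q(D(c_0);\Q)$, i.e.\ the derived coend $B(*,\Cc,C_*(D;\Q))$, and the homology spectral sequence is the one from filtering by $p$. This construction is functorial in the diagram of chain complexes and sends objectwise quasi-isomorphisms to quasi-isomorphisms, inducing isomorphisms from $E^2$ on (the $E^1$ pages are already isomorphic). Theorem~\ref{thmm:formQ} provides a zigzag of objectwise quasi-isomorphisms of diagrams, forgetting the algebra structure, between $C_*(D;\Q)$ and $H_*(D;\Q)$ with zero differential. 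For the diagram $H_*(D;\Q)$ with zero differential, the double complex splits as a direct sum over $q$ of the complexes $B(*,\Cc,H_q(D))$, each concentrated in a single row. Hence all differentials $d^r$, $r\ge2$, vanish, and by the zigzag so do those for $D$. This also yields the explicit model $C_*(\hoc D)\simeq\bigoplus_q B(*,\Cc,H_q(D))[q]$.

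\emph{Cohomology.} For cohomology, dualize: $C^*(\hoc D;\Q)$ is the total complex of the cosimplicial replacement $\prod_{c_0\to\cdots\to c_p}C^q(D(c_p))$, the homotopy limit of the cochains. Applying $\Hm(-,\Q)$ to the chain zigzag gives a zigzag of cochain quasi-isomorphisms, since we are over a field. The same splitting argument then gives collapse.

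The main obstacle is the first step: making the rationalization functorial and strictly compatible with the diagram, and ensuring that the objectwise $\Q$-homology equivalence yields an isomorphism of $E^2$ pages. The latter is clear from the spectral sequence comparison theorem. The former is handled by a functorial $\Q$-localization such as Bousfield--Kan, but this needs a check for $n=1$, where nilpotency with abelian $\pi_1$ is exactly what is required.
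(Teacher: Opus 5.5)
Your proposal is correct and follows essentially the same route as the paper: a functorial Bousfield--Kan $\Q$-localization turns $D$ into a diagram of EML spaces of height $n$, and rigidification together with Theorem~\ref{thm:formQ} then gives formality in $\mathrm{DGMod}_\Q$. Collapse follows from the simplicial replacement bicomplex (Proposition~\ref{pr:collapse}), and the cohomological case comes from dualizing over the field $\Q$ (Remark~\ref{rk:duality}); the only cosmetic difference is that the paper transfers formality back to $c\mapsto C_*(D(c);\Q)$ itself, whereas you compare the spectral sequences of $D$ and its rationalization directly, which is equally immediate since the comparison map is a columnwise quasi-isomorphism.
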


However, general  formality statements are quite limited outside the cases singled out in theorems~\ref{thm:formQ} and~\ref{thm:formsmalldim}. 
Indeed, compact tori are EML spaces of height $1$, and computations of toric varieties already show that diagrams of tori may not be formal over $\k=\Z$. Namely, assuming natural formality of diagrams of tori over $\k=\Z$ or $\Z/2\Z$, one deduces the algebra isomorphism~\cite{fr-06}
\[
	H^{*}(X_{\Sigma};\k)\cong \Tor_{H^*(BT;\k)}(\k[\Sigma],\k),
\]
for the toric variety $X_{\Sigma}$ with the $T$-action corresponding to any smooth rational polyhedral fan $\Sigma$ in $\R^{n}$.
On the right hand side, the product is defined canonically via Koszul resolution.
However, the products are in general different by~\cite[Example~1.1]{fr-21}. Thus the singular cochains of toric diagrams cannot be formal as commutative differential graded algebras over neither of $\Z,\Z/2\Z$.
We remark that this argument does not forbid formality over $\Z[1/2]$ (since the products then coincide by~\cite[Theorem~1.2]{fr-21}), for which one can use theorem~\ref{thmm:nonformality} below instead. 

In section~\ref{sec-nonformalityFp} we give another obstruction to formality relying on some $\Ext$-computations in functor categories. We show that formality of singular chains may fail for some diagrams of EML spaces as soon as $\k$ is not a $\Q$-algebra, whatever the height of EML spaces of the diagram is. To be more specific, if $C$ is a cyclic group, we denote by $\langle C\rangle$ the full subcategory of abelian groups containing all the finite direct sums of copies of $C$. We prove the following result in theorem~\ref{thm:nonformality}.
\begin{theoremM}\label{thmm:nonformality}
	Assume that $\k$ is not a $\Q$-algebra, and let $C$ be a cyclic group such that $C\otimes_{\Z}\k\ne 0$. Assume that $D$ is a diagram of EML spaces of height $n\geq 1$ such that $\pi_n D(G)$ is naturally isomorphic to $G$. Then the diagram $c\mapsto C_*(D(c);\k)$ is not formal in the category of chain complexes of $\k$-modules.
\end{theoremM}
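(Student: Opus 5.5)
Throughout, $D$ is a diagram over $\Cc=\langle C\rangle$, regarded as a functor $G\mapsto D(G)$. The plan is to argue by contradiction: if $c\mapsto C_*(D(c);\k)$ were formal, its homology functor $G\mapsto H_*(D(G);\k)$ would be recovered as a formal object in the derived category of functors $\langle C\rangle\to\k\text{-Mod}$. We then compare this with a functorial invariant that is forced to be nonzero.

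\textbf{Step 1: splitting.} By proposition~\ref{pr:zigzagofdiagramsofEML} we may replace $D$ by a diagram of simplicial abelian groups $G\mapsto K(G,n)$, functorial in $G$. Formality in chain complexes means $C_*(D;\k)$ is quasi-isomorphic, as a functor, to $\bigoplus_q H_q(D;\k)[q]$. In particular, every truncation splits in the derived category of functors. So the two-stage Postnikov-type piece of $C_*(D;\k)$ in degrees $n$ and $n+1$, or in the first relevant degree $m>n$ where extra homology appears, would split. We will get the contradiction by exhibiting a nonzero $k$-invariant, that is, a nonzero class in
\[
\Ext^{j}_{\mathcal F}\bigl(H_a(D;\k),H_b(D;\k)\bigr)\quad\text{with } j=a-b+1,
\]
which is the connecting class of the truncation triangle.

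\textbf{Step 2: identifying the obstruction.} Take $p$ a prime not invertible in $\k$ with $C\otimes\k\ne0$. Then $H_n(D(G);\k)=G\otimes\k$, and the lowest-degree homology above it comes from the Pontryagin/divided-power structure: for $n=1$ and cyclic $C$, $H_2$ contains a $\Lambda^2$ or $\Gamma^2$-type term, or a $\Tor(G,\k)$ term. For general $n$, the first new class in degree $n+1$ or $n+2$ is a functor like $\Tor_1^{\Z}(G,\k)$ or $G\otimes \Z/p$. The truncation of $C_*(K(G,n);\k)$ to these degrees is, functorially, the complex computing derived functors of $G\mapsto G\otimes\k$ composed with the bar construction. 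Its $k$-invariant is the Bockstein, respectively the class of the extension $0\to G\otimes p\k\to\cdots$, or, in the quadratic case, the class of the universal extension $0\to \Lambda^2\to\otimes^2\to S^2\to0$ or $\Gamma^2$. Concretely, I would compute that the truncation is non-split by evaluating naturality on the endomorphism $p\cdot\mathrm{id}$ or on the diagonal/sum maps $G\oplus G\to G$. Additive functors must be split by additive data, whereas quadratic functors such as $\Gamma^2(G)\otimes\k$ are not additive. So a splitting map would give a natural map whose restriction along the sum map contradicts the cross-effect. This is the functor calculus argument: formality forces a natural splitting, hence the cross-effect of the middle term must vanish, but it does not.

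\textbf{Step 3: the main obstacle.} The hard part is proving $\Ext^{j}_{\mathcal F(\langle C\rangle,\k)}$ between the relevant polynomial functors is nonzero, with the specific class being nontrivial, uniformly in $n$. I would first handle the smallest case: $n$ even, where $H_{2n}$ contains $\Gamma^2(G)\otimes\k$, against $G\otimes \k$ and the $2$-torsion. Alternatively, for $p$-torsion, I would use the map $H_n\to H_{n+1}$ given by the Bockstein. When $C=\Z$ or $\Z/p^r$ and $p\cdot 1_\k$ is not invertible, the natural extension of $\k$-modules $G\otimes\k\xrightarrow{p}G\otimes\k$ realizes $\Ext^1(G\otimes\k/p,\,G\otimes\k/p)\ne0$ in functors. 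I would show this class survives in the functor category by evaluating on $G=C$ and using that natural transformations of additive functors on $\langle C\rangle$ are determined by $\mathrm{End}(C)\otimes\k$. I expect this $\Ext$ computation, and matching the class with the actual $k$-invariant of $C_*(K(-,n);\k)$, to be the main work. It should be reduced to $n=1$ via suspension, since the $k$-invariant is stable under the $\overline{W}$ construction.
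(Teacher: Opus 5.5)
\textbf{The core step is missing.} Your framework is sound as far as it goes. Formality in the derived category of $\mathcal{F}(\la C\ra,\k)$ splits every truncation $\tau_{[b,a]}$ whose homology is concentrated in degrees $b<a$, so one nonzero connecting class would give the contradiction. (That class lies in $\Ext^{a-b+1}(H_b,H_a)$, from lower to higher homology, not in $\Ext^{a-b+1}(H_a,H_b)$.) But you never exhibit such a class, and the candidates you point to are not obstructions in general.

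Take $\k=\Z[1/2]$ and $C=\Z/3$, which the theorem covers:
\begin{enumerate}
\item $\Tor_1^\Z(G,\k)=0$ because $\k$ is flat, so there is no Tor or Bockstein term.
\item Every $\Ext$ group between $G\otimes_\Z\k$ and a quadratic functor ($\Lambda^2$, $\Gamma^2$, Whitehead's $\Gamma$) is killed by $2^2-2=2$, by naturality in $2\cdot\mathrm{id}_G$, hence vanishes. Indeed, for $n=1$ the map $e$ of Proposition~\ref{prop:level1formalityQ} only needs $1/2$ in degrees $\le 2$, so $\tau_{\le 2}\overline{B}\k[-]$ is actually formal.
\item For $n\ge 2$ one has $H_{n+1}=0$, and $H_{n+2}$ is quadratic for $n=2$ and equal to $G/2G=0$ for $n\ge 3$.
\end{enumerate}
So none of your candidates is nonzero here. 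The obstructions that do exist sit much higher: in degree $pn$ (via $\Gamma^p$ or $\Lambda^p$), or in degrees $2p-1$, $2p$, $2p+n-1$ (via transpotence and Bockstein). They generally lie above other nonzero homology, so no two-stage truncation isolates them.

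The other steps do not repair this:
\begin{enumerate}
\item The $\Ext^1$ class in Step 3 has the wrong degree to be a $k$-invariant, since adjacent degrees give $\Ext^2$.
\item The cross-effect argument proves too much: over $\Q$ the diagram is formal, although $\Lambda(G\otimes_\Z\Q[1])$ has nonzero cross-effects.
\item There is no reduction to $n=1$. A zig-zag in $\mathrm{DGMod}_\k$ carries no algebra structure to which $\overline{B}$ could be applied, and suspension is an isomorphism only below degree $2n$.
\end{enumerate}

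\textbf{The missing idea} is an invariant computable directly on chains, so that no individual $k$-invariant has to be identified. Fix a residue field $\F$ of $\k$ of characteristic $p$ with $C\otimes_\Z\F\cong\F$, and set $A(G)=G\otimes_\Z\F$. Termwise, $\overline{B}^n\k[-]$ is $\k$ in degree $0$, $\overline{P}$ in degree $n$, and a sum of $\overline{P}^{\otimes k}$ with $k\ge 2$ in higher degrees. Projectivity of $\overline{P}$, Yoneda and Pirashvili's vanishing lemma then give $\Ext^*_{\mathcal{F}(\la C\ra,\k)}(\overline{B}^n\k[-],A)\cong\F$. Formality would identify this with $\bigoplus_{i,j}\Ext^i(H_j(n,\k),A)$, which already contains $\Hm(H_n(n,\k),A)\ne 0$.

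It therefore suffices to produce one further nonzero class, and this is where the real input goes. When $C$ is infinite, the summand $U(G[n])\otimes_\Z\k$ of Lemma~\ref{lm:clecar0} supplies one: the Verschiebung gives a nonzero map $\Gamma^p\to A$, and the Koszul complex gives $\Ext^{p-1}(\Lambda^p,A)\neq 0$. When $C$ is finite, one uses nonzero natural maps to $A$ from $H_{2p-1}$ or $H_{2p}$ (for $n=1$) and from $H_{2p+n-1}$ (for $n\ge 2$), built from transpotence, Verschiebung and Bockstein.
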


Using models for singular chains given by iterated bar constructions $\overline{B}$, we reduce the proof of theorem~\ref{thmm:nonformality} to non-formality of the complex $\overline{B}^n\k[G]$ as a functor of the abelian group $G$. In order to prove this, we show that the total ranks (with respect to the usual bigrading) of the groups 
\begin{equation}\label{eq:twoext}
	\Ext_{\mathcal{F}(\la C\ra,\k)}(\overline{B}^n\k[-],-\otimes \mathbb{F}),\ 
	\Ext_{\mathcal{F}(\la C\ra,\k)}(H_{*}(\overline{B}^n\k[-]),- \otimes\mathbb{F}),
\end{equation}
are different in the functor category $\mathcal{F}(\la C\ra,\k)$ of functors from $\langle C\rangle$ to $\k$-modules.
Here, $\mathbb{F}$ denotes any quotient field, $\chr\mathbb{F}=p>0$, of $\k$ such that $C\otimes \mathbb{F}\simeq \mathbb{F}$ holds.
Namely, we prove that the first group in~\eqref{eq:twoext} is isomorphic to $\mathbb{F}$, whereas the second group in~\eqref{eq:twoext} has rank $>1$. (Formality property would imply that both total ext groups are isomorphic, leading to a contradiction.)

In conclusion, let us compare our work with some recent results obtained by Zakharov. In particular,~\cite[Theorem~1.1.2]{za-25} implies that the singular chain functors $A\mapsto C_*(K(A,n);\k)$ are isomorphic to the Dold-Puppe derived functors $A\mapsto L_*S(A\otimes_\Z\k,n)$, and the latter are known to be formal under the hypotheses of our theorems~\ref{thm:formQ} and theorem~\ref{thm:formsmalldim}. However all these isomorphisms are isomorphisms in the category $\mathcal{F}(\Ab,D(\k))$ of functors from abelian groups to the derived category of $\k$-modules. On the contrary, all our results can be reformulated as statements in the derived category $D(\mathcal{F}(\Ab,\k))$ of functors from abelian groups to $\k$-modules, which is more rigid, and which is the correct category to consider if one wants to obtain collapsing of spectral sequences as in theorem~\ref{thmm:rcollf}. 
    	
\section{Definitions and basic facts on formality}\label{sec:defform}
	
\subsection{Eilenberg-MacLane spaces}
Let $n$ be a positive integer.
An \emph{Eilenberg-MacLane space  of height $n$} (or \emph{EML space of height $n$}, for short) is a pointed simplicial set $E$ such that $\pi_kE=0$ for $k\ne n$. An \emph{abelian Eilenberg-MacLane space of height $n$} (or abelian EML space of height $n$ for short) is a simplicial abelian group $E$ such that $\pi_kE=0$ for $k\ne n$. 

For all pointed simplicial sets $X$ we denote by $\mathbb{Z}[X]$ the free simplicial abelian group on $X$ and we let $X^{(n)}=\mathrm{cosk}_{n+1}\mathbb{Z}[X]/\mathbb{Z}[*]$, where $\mathrm{cosk}_{n+1}$ refers to the $(n+1)$-st coskeleton of a simplicial set~\cite{dk-84}. We observe that coskeletons of simplicial abelian groups are simplicial abelian groups, hence this construction actually defines a functor
$-^{(n)}:\sset_*\to \sab$.
Let $\alpha_{X}^{n}\colon X\to X^{(n)}$ denote the following composition of natural morphisms of simplicial sets (we abuse notations and write $X^{(n)}$ both for the simplicial abelian group and its underlying simplicial set)
\[
X\xrightarrow[]{h}\mathbb{Z}[X]\twoheadrightarrow \mathbb{Z}[X]/\mathbb{Z}[*] \twoheadrightarrow \mathrm{cosk}_{n+1}\mathbb{Z}[X]/\mathbb{Z}[*] =X^{(n)},
\]
where $h$ is the Hurewicz morphism~\cite[Chapter~8, Application~8.3.9]{we-97}. The next lemma allows to translate questions about EML spaces into questions about abelian EML spaces.

\begin{lemma}\label{lm:1}
	If $E$ is an EML space of height $n$, then $\alpha_{E}^{n}$ is a weak homotopy equivalence.     
\end{lemma}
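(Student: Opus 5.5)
We need to show that $\alpha^n_E$ induces an isomorphism on all homotopy groups. We work with pointed simplicial sets; weak equivalences may be tested after geometric realization, or, after replacing $E$ by a Kan fibrant model, on combinatorial homotopy groups. Since $\alpha^n$ is natural and all the constructions involved preserve weak equivalences, we may assume $E$ is Kan. The plan is to compute the homotopy groups of $X^{(n)}$ and compare them with those of $E$ via the Hurewicz theorem.

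\textbf{Step 1: homotopy of the target.} The simplicial abelian group $\Z[X]/\Z[*]$ is $\widetilde{\Z}[X]$, and its homotopy groups are the reduced homology groups $\widetilde{H}_k(X;\Z)$; the map $X\to \widetilde{\Z}[X]$ induces the Hurewicz homomorphism on $\pi_k$. Simplicial abelian groups are Kan complexes. The coskeleton $\mathrm{cosk}_{n+1}$ of a Kan complex $K$ agrees with $K$ in degrees $\le n+1$ and kills higher homotopy. Hence $K\to \mathrm{cosk}_{n+1}K$ is an isomorphism on $\pi_k$ for $k\le n$, and $\pi_k\,\mathrm{cosk}_{n+1}K=0$ for $k>n$.

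There is an index subtlety here: depending on conventions, $\mathrm{cosk}_{n+1}$ kills $\pi_{k}$ for $k\ge n+1$ or for $k\geq n$, and it preserves $\pi_n$ exactly when its simplices up to dimension $n+1$ agree with those of $K$. I will check this against the convention of~\cite{dk-84}. With the convention where $\mathrm{cosk}_{m}$ preserves simplices in degrees $\le m$, we get $\pi_k(\mathrm{cosk}_{n+1}K)=\pi_kK$ for $k\le n$ and $0$ for $k\geq n+1$. This follows from the standard fact that for a Kan complex, $\pi_k$ only depends on the $(k+1)$-skeleton data.

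So $\pi_k X^{(n)}=\widetilde H_k(X)$ for $k\le n$ and $\pi_k X^{(n)}=0$ otherwise, and $\pi_k(\alpha^n_X)$ is the Hurewicz map for $k\le n$.

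\textbf{Step 2: comparison for EML spaces.} Let $E$ be an EML space of height $n$. For $k>n$ both $\pi_kE$ and $\pi_kE^{(n)}$ vanish. For $k<n$, $E$ is $(n-1)$-connected, so $\widetilde H_k(E)=0=\pi_k E$. (If $n=1$, there is only the case $k=0$ and connectedness.) For $k=n$ we need the Hurewicz map $\pi_nE\to H_n(E)$ to be an isomorphism. For $n\ge 2$ this is the Hurewicz theorem for the $(n-1)$-connected space $E$. For $n=1$ the Hurewicz map is abelianization $\pi_1E\to\pi_1E^{ab}$. This is the main obstacle: it requires $\pi_1E$ to be abelian, so I will verify that the definition intends abelian $\pi_1$ (as the later applications via abelian group structures suggest). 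If it does not, the lemma must be read with that hypothesis, and I would state it explicitly. With $\pi_1$ abelian, abelianization is an isomorphism.

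Thus $\alpha^n_E$ induces isomorphisms on all homotopy groups at the basepoint. Both spaces are connected, so $\alpha^n_E$ is a weak homotopy equivalence.
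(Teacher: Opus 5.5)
Your argument is correct and is essentially the paper's proof. The paper also uses that $\pi_k E^{(n)}=0$ for $k\neq n$ (coskeleton truncation plus $\pi_*\widetilde{\Z}[E]\cong\widetilde H_*(E)$) and concludes in degree $n$ by the Hurewicz theorem; you simply spell out the index check and the connectivity argument in more detail. Your caveat for $n=1$ is well founded: the paper's appeal to Hurewicz also tacitly requires $\pi_1E$ to be abelian (for $E=B\Sigma_3$ the map $\alpha^1_E$ induces the abelianization $\Sigma_3\to\Z/2$ on $\pi_1$), so the lemma should be read with that hypothesis, as the rest of the paper implicitly does.
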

\begin{proof}
	By construction, $\pi_k E^{(n)}=0$ if $k\ne n$. Therefore it suffices to check that $\pi_n(\alpha_{E}^{n})$ is an isomorphism, which follows from the Hurewicz theorem.
\end{proof}

Let $K:\mathbf{Ch}(\mathbb{Z})\to \sab$ denote the Dold-Kan functor~\cite[Chapter~8, \S8.4]{we-97}. For all abelian groups $A$, we let $A[n]$ be the chain complex equal to $A$ in degree $n$ and to zero in degrees different from $n$. Then the simplicial abelian group $K(A[n])$ is an abelian EML space with $\pi_n(A)\simeq A$. 
\begin{lemma}\label{lm:2}
	Every abelian EML space $E$ of height $n$ is connected to $K(\pi_nE[n])$ by a zig-zag of weak equivalences of simplicial abelian groups, which are natural with respect to all morphisms of the simplicial abelian group $E$.
\end{lemma}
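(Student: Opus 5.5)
We will pass through chain complexes by Dold-Kan and use truncation functors there. Let $N\colon \sab\to \mathbf{Ch}(\mathbb{Z})$ be the normalized chain functor, inverse (up to natural isomorphism) to $K$. Since $K$ and $N$ are mutually inverse equivalences that preserve and reflect weak equivalences (quasi-isomorphisms correspond to weak equivalences), it suffices to connect $NE$ to $(\pi_nE)[n]=H_n(NE)[n]$ by a zig-zag of quasi-isomorphisms of chain complexes. This zig-zag should be natural in all chain maps $NE\to NE'$, and therefore in all morphisms of simplicial abelian groups $E\to E'$. Applying $K$ then gives the zig-zag of the lemma, using the natural isomorphism $KN\cong \mathrm{id}$.

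For a chain complex $M$ we take the good truncations. Let $\tau_{\ge n}M$ be the complex
\[
\cdots\to M_{n+1}\to Z_nM\to 0,
\]
with $M_k$ in degrees $k>n$, the cycles $Z_nM$ in degree $n$, and zero below. It is a subcomplex of $M$, and the inclusion $\iota\colon\tau_{\ge n}M\hookrightarrow M$ is natural in chain maps, because chain maps send cycles to cycles. On homology, $\iota$ induces an isomorphism in degrees $\ge n$ and maps $0$ to $H_kM$ for $k<n$. Next, let $\tau_{\le n}$ denote the quotient of a complex $P$ with
\[
\cdots \to 0\to P_n/B_nP\to P_{n-1}\to\cdots,
\]
and zero above degree $n$. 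The projection $P\twoheadrightarrow \tau_{\le n}P$ is natural and induces an isomorphism on $H_k$ for $k\le n$.

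Now take $M=NE$ with $E$ an abelian EML space of height $n$, so $H_kM=0$ for $k\ne n$. The inclusion $\tau_{\ge n}M\to M$ is then a quasi-isomorphism. Put $P=\tau_{\ge n}M$. Its truncation $\tau_{\le n}P$ is the complex $Z_nM/B_nM=H_nM$ concentrated in degree $n$. The projection $P\to\tau_{\le n}P$ is an isomorphism on $H_n$ and is trivially one in every other degree, because $H_kP=0$ for $k>n$ and both sides vanish below $n$. This gives a natural zig-zag
\[
NE\hookleftarrow \tau_{\ge n}NE\twoheadrightarrow (\pi_nE)[n],
\]
where we use the natural isomorphism $H_n(NE)\cong\pi_nE$. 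Applying $K$ and composing with $E\cong KNE$ yields
\[
E\xleftarrow{\sim} K(\tau_{\ge n}NE)\xrightarrow{\sim} K(\pi_nE[n]),
\]
a zig-zag of weak equivalences of simplicial abelian groups.

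The main point to check is naturality with respect to every morphism $f\colon E\to E'$, including those that are not equivalences. This holds because each construction ($N$, $\tau_{\ge n}$, the quotient to homology, $K$) is a functor on all chain maps or simplicial homomorphisms, and each map in the zig-zag is a natural transformation between these functors. No choices are involved, so the naturality squares commute on the nose. The last thing to confirm is that the natural isomorphism $\pi_nE\cong H_n(NE)$ from Dold-Kan is compatible with $f$, which is standard.
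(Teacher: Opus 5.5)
Your proposal is correct and follows the paper's route. The paper reduces via Dold--Kan to the well-known fact that a chain complex $C$ with homology concentrated in degree $n$ is connected to $H_n(C)[n]$ by a natural zig-zag of quasi-isomorphisms. Your good-truncation zig-zag $NE\hookleftarrow\tau_{\ge n}NE\twoheadrightarrow H_n(NE)[n]$ is exactly the standard proof of that fact, which the paper leaves implicit.
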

\begin{proof}
	Using the Dold-Kan equivalence the lemma is equivalent to the well-known fact that every chain complex of abelian groups $C$ such that $H_k(C)=0$ for $k\ne n$ is connected to the chain complex $H_n(C)[n]$ by a zig-zag of quasi-isomorphisms natural with respect to $C$.
\end{proof}

We can apply the previous lemmas to prove a rigidification property for diagrams of EML spaces.
We first need to introduce a definition. We use the term diagram as a synonym of functor.

\begin{definition}
	A diagram $D:\Cc\to \sset_{*}$ is called a \emph{diagram of EML spaces (of height $n$)}
	if $D(c)$ is an EML space of height $n$ for all $c$.
	The diagram $D$ is called a \emph{diagram of abelian EML spaces (of height $n$)} if it factors as
	\[
	D\colon \Cc \xrightarrow[]{} \sab \xrightarrow[]{\mathrm{forget}}\sset_{*}. 
	\]  %
\end{definition}

Now applying lemmas~\ref{lm:1} and~\ref{lm:2}, we immediately obtain the following rigidification property. This property justifies that we only consider diagrams of abelian EML spaces in the remainder of section, as well as in section~\ref{sec-formality}.
\begin{proposition}\label{pr:zigzagofdiagramsofEML}
	Let $D:\Cc\to \sset_*$ be a diagram of EML spaces of height $n\ge 1$. Let $D':\Cc\to \sset_*$ denote the diagram of abelian EML spaces given by $D'(c)=K(\pi_nD(c)[n])$. Then $D$ is connected to $D'$ by a zigzag of natural transformations which are objectwise weak equivalences of simplicial sets.   
\end{proposition}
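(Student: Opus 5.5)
The plan is to build the zig-zag in two stages: first from $D$ to the diagram of abelian EML spaces $D^{(n)}\colon c\mapsto D(c)^{(n)}$, then from $D^{(n)}$ to $D'$. The main point is that each intermediate construction is functorial in the pointed simplicial set, or in the simplicial abelian group. Composing a functor with $D$ then gives a diagram, and the natural maps give natural transformations.

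\emph{Step 1.} Since $-^{(n)}\colon\sset_*\to\sab$ is a functor and $\alpha^n_X$ is natural in $X$, composing with $D$ gives a diagram $D^{(n)}\colon\Cc\to\sab$. We also get a natural transformation $\alpha^n_{D}\colon D\to \mathrm{forget}\circ D^{(n)}$. Naturality is clear, because the Hurewicz map $h$, the quotient map, and the coskeleton unit are all natural. By Lemma~\ref{lm:1}, each component $\alpha^n_{D(c)}$ is a weak equivalence, since $D(c)$ is an EML space of height $n$. So $D^{(n)}$ is a diagram of abelian EML spaces of height $n$, weakly equivalent to $D$ objectwise via a single natural map.

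\emph{Step 2.} Next we connect $D^{(n)}$ to $c\mapsto K(\pi_n D^{(n)}(c)[n])$. We apply Lemma~\ref{lm:2} objectwise, using that its zig-zag is natural in morphisms of simplicial abelian groups. Concretely, via the Dold-Kan normalization $N$ we use the standard natural zig-zag of chain complexes
\[
C \hookleftarrow \tau_{\ge n}C \twoheadrightarrow H_n(C)[n],
\]
where $\tau_{\ge n}C$ is the good truncation: $C_k$ for $k>n$, $Z_n C$ in degree $n$, and $0$ below. Both maps are functorial in $C$, and both are quasi-isomorphisms when $H_k(C)=0$ for $k\ne n$. Applying $K$ and using $KN\cong \mathrm{id}$ gives a zig-zag of natural transformations of $\sab$-valued diagrams. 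Composing with the forgetful functor turns these into objectwise weak equivalences of simplicial sets, since weak equivalences of simplicial abelian groups are by definition those of the underlying simplicial sets.

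\emph{Step 3.} Finally we identify the target with $D'$. The natural isomorphism $\pi_n(\alpha^n_{D(c)})\colon \pi_nD(c)\xrightarrow{\cong}\pi_n D^{(n)}(c)$ is natural in $c$, so it induces a natural isomorphism of diagrams $D'\cong K(\pi_nD^{(n)}(-)[n])$. Concatenating Steps 1–3 gives the required zig-zag.

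The only delicate point is keeping track of functoriality; no step is a genuine obstacle. Specifically, one must check that the zig-zag of Lemma~\ref{lm:2} is strictly functorial, which the good truncation provides, and that $\pi_n$ is well-defined as a functor on pointed simplicial sets. For $n=1$, $\pi_1$ might be non-abelian in general, but for an EML space of height $1$ with an abelian model, $\pi_1 D(c)\cong \pi_1 D^{(1)}(c)$ is abelian, because the Hurewicz map is an isomorphism there. Fibrancy issues are handled by computing $\pi_n$ after fibrant replacement, which is also functorial.
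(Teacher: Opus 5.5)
Your proof is correct and follows the paper's argument, which simply applies Lemma~\ref{lm:1} objectwise and then the natural zig-zag of Lemma~\ref{lm:2}; you additionally make explicit the good-truncation zig-zag and the identification $\pi_nD(c)\cong\pi_nD^{(n)}(c)$ via $\alpha^n$, both of which the paper leaves implicit. One small correction: your $n=1$ remark is circular, since Lemma~\ref{lm:1} already requires $\pi_1D(c)$ to be abelian (otherwise $H_1$ is only its abelianization, and $\alpha^1$ is not a weak equivalence), so abelianness should be stated as an implicit hypothesis, which is needed anyway for $K(\pi_1D(c)[1])$ to be defined.
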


\subsection{Formality}
Let $\k$ be a commutative ring, and let $\Dd$ denote one of the following categories: 
\begin{enumerate}
	\item[(1)] the category
	$\mathrm{DGMod}_\k$ of complexes of $\k$-modules, 
	\item[(2)] the category $\mathrm{ACDGAlg}_\k$ of augmented graded commutative differential graded $\k$-algebras, 
	\item[(3)] the category $\mathrm{CDGHopf}_\k$ of graded commutative differential graded Hopf $\k$-algebras.
\end{enumerate}
A \emph{quasi-isomorphism} in $\Dd$ is a morphism which induces an isomorphism in homology. 
If $M$ is an object of $\Dd$, then we consider its homology $H_*(M)$ as an object of $\Dd$ with trivial differential. When doing this for Hopf algebras, we implicitly assume a flatness assumption on $H_*(M)$ to ensure that  $H_*(M^{\otimes 2})$ is isomorphic to $H_*(M)^{\otimes 2}$, hence to ensure that the coproduct on $M$ induces a coproduct on the homology.
\begin{definition}\label{defi:formality} 
	A diagram $D:\Cc\to \Dd$ is \emph{formal}
	if there is a zig-zag of quasi-isomorphisms in $\Dd$, natural with respect to $c$ in $\Cc$, between $D(c)$ and $H_*(D(c))$.
\end{definition}

The previous definition can be applied to diagrams of simplicial sets. To be more specific, for all commutative rings $\k$, we denote by $\k[X]$ the free simplicial $\k$-module on $X$ and by $C_*(X;\k)=C_{\mathrm{norm}}\k[X]$ the associated normalized complex of $\k$-modules, that is the complex of normalized singular chains of $X$. The homology of this complex is the singular homology of $X$, and we denote it by $H_*(X;\k)$.
When $X$ is a simplicial abelian group, the addition $X\times X\to X$ and the Eilenberg-Zilber shuffle map 
make $C_*(X;\k)$ into a graded commutative differential $\k$-algebra with unit induced by the inclusion of the constant simplicial abelian group $0$ into $X$. Moreover, the diagonal map $X\to X\times X$  and the Alexander-Whitney map 
induce a coproduct on $C_*(X,\k)$ with counit induced by the unique morphism of simplicial groups $X\to 0$. Together with the previous product, it endows $C_*(X;\k)$ with the structure of a graded commutative differential graded Hopf algebra over $\k$~\cite[Corollary~A.7]{gu-ma-74}. 
	
Thus, given a diagram $D:\Cc\to \sset_*$, we can interpret the assignment $c\mapsto C_*(D(c);\k)$ as a functor from $\Cc$ to $\mathrm{DGMod}_\k$, or to $\mathrm{ACDGAlg}_\k$ or to $\mathrm{CDGHopf}_\k$ if the functor $D$ happens to factor through the category of simplicial abelian groups.

\begin{definition}
	A diagram $D:\Cc\to \sset_*$ is \emph{formal in the category $\Dd$} if there exists a zig-zag of quasi-isomorphisms of $\Dd$, natural with respect to $c$ in $\Cc$, between $C_*(D(c);\k)$ and $H_*(D(c);\k)$.
\end{definition}

\begin{remark}
In this article, we work with diagrams of pointed simplicial sets. One can alternatively consider pointed topological spaces, and replace $\sset_*$ by $\top_*$ in the previous definition. But then a diagram $D\colon\Cc\to \top_*$ is formal if and only if the diagram $\mathrm{Sing}\circ D\colon\Cc\to \sset_*$ is formal. Therefore all the results of the article apply to diagrams of pointed topological spaces as well.      
\end{remark}

\subsection{Models of Eilenberg-MacLane spaces via bar constructions} 

Our favorite model for EML spaces will not be the $K(A[n])$ appearing in proposition~\ref{pr:zigzagofdiagramsofEML}, but rather iterated bar constructions, which allow inductive arguments. We will take~\cite{gu-ma-74} as a reference for bar constructions.
We denote by $\overline{B}A$ the \emph{reduced normalized bar construction} of a graded commutative dg $\k$-algebra $A$ with augmentation $\epsilon\colon A\to \k$. Some explicit formulas for $\overline{B}$ will be recalled later, in sections~\ref{sec-formality} and~\ref{sec-nonformalityFp}. The reduced normalized bar construction yields a functor 
\[
	\overline{B}\colon\mathrm{ACDGAlg}_\k\to \mathrm{CDGHopf}_\k
\]
and since Hopf algebras are augmented algebras, this functor can be iterated.
If $G$ is an abelian group, we may consider the group $\k$-algebra $\k[G]$ as a graded commutative dg Hopf $\k$-algebra concentrated in degree zero, and apply $n$ times the reduced bar construction, to obtain a graded commutative dg Hopf $\k$-algebra $\overline{B}^n \k[G]$. The relevance of bar constructions for our formality problems comes from the following proposition and its corollary. 

\begin{proposition}\label{pr:zigzagforEML}
	Let $\k$ be a commutative ring. For all abelian EML spaces $E$ of height $n$, $C_*(E;\k)$ is connected to $\overline{B}^n \k[\pi_nE]$ by a zig-zag of quasi-isomorphisms of graded commutative dg-$\k$-Hopf algebras, natural with respect to the abelian EML space $E$.
\end{proposition}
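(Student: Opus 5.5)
We first reduce to the case $E=K(A[n])$ with $A=\pi_nE$. Lemma~\ref{lm:2} gives a zig-zag of weak equivalences of simplicial abelian groups between $E$ and $K(\pi_nE[n])$, natural in $E$. Applying $C_*(-;\k)$ produces maps of graded commutative dg Hopf algebras, since the Hopf structure is built from the group law, the diagonal and the Eilenberg--Zilber and Alexander--Whitney maps, all of which are natural for morphisms of simplicial abelian groups. These maps are quasi-isomorphisms because weak equivalences of simplicial sets induce homology isomorphisms. It therefore suffices to connect $C_*(K(A[n]);\k)$ to $\overline{B}^n\k[A]$ by Hopf quasi-isomorphisms natural in the abelian group $A$.

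For this we argue by induction on $n$ using the $\overline{W}$ construction, following~\cite{gu-ma-74}. For a simplicial abelian group $G$, $\overline{W}G$ is again a simplicial abelian group, and $\overline{W}K(A[n-1])$ is an abelian EML space of height $n$ with $\pi_n\cong A$, naturally in $A$. By lemma~\ref{lm:2}, it is therefore naturally weakly equivalent, as a simplicial abelian group, to $K(A[n])$. The key input is the comparison theorem of~\cite{gu-ma-74}: for a simplicial abelian group $G$ there is a natural quasi-isomorphism of commutative dg Hopf algebras
\[
\overline{B}\,C_*(G;\k)\longrightarrow C_*(\overline{W}G;\k),
\]
where $C_*(G;\k)$ carries the shuffle product. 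Moreover, $\overline{B}$ preserves quasi-isomorphisms between augmented commutative dg algebras that are $\k$-flat, for instance $\k$-free, which can be checked with the filtration by bar length. Starting from $C_*(K(A[0]);\k)=\k[A]$, the constant simplicial group, and iterating, we obtain a natural chain of Hopf quasi-isomorphisms
\[
\overline{B}^n\k[A]\to \overline{B}^{n-1}C_*(\overline{W}K(A[0]))\to\cdots\to C_*(\overline{W}^nK(A[0]);\k),
\]
each step being $\overline{B}^{j}$ applied to the comparison map at the next stage. We then connect $\overline{W}^nK(A[0])$ to $K(A[n])$ by the natural zig-zag of simplicial abelian groups from lemma~\ref{lm:2}. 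All the complexes involved are $\k$-free: the normalized chains of a simplicial set are free, and bar constructions on free modules are free. This justifies applying $\overline{B}$ to every quasi-isomorphism along the way.

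The main obstacle is to check that the Eilenberg--Zilber-type comparison $\overline{B}C_*(G)\to C_*(\overline{W}G)$ is a map of Hopf algebras, compatible with both the shuffle products and the coproducts, and not merely a chain map. If that cannot simply be cited from~\cite{gu-ma-74}, we would fall back on a weaker route. Compatibility with the coalgebra structure is formal from the explicit twisted shuffle formula. For the multiplicativity, we would deduce it from the fact that the map is natural in $G$ and commutes with the Eilenberg--Zilber maps for $G\times G$, using that the multiplication of $G$ is itself a homomorphism of simplicial abelian groups, since $G$ is abelian.
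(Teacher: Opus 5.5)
Your proposal follows the paper's route. You reduce by lemma~\ref{lm:2} to $\overline{W}^n\pi$ (passing through $K(\pi[n])$), then iterate the Gugenheim--May comparison $\overline{\lambda}\colon \overline{B}C_*(G;\k)\to C_*(\overline{W}G;\k)$ of \cite[Theorem~A.20]{gu-ma-74}, applying $\overline{B}$ to quasi-isomorphisms. Your remark that this last step uses $\k$-flatness, which holds here, is a precision the paper leaves implicit.

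However, the difficulty is not where you put it. That $\overline{\lambda}$ is a quasi-isomorphism of Hopf algebras is exactly what the cited theorem provides, so no fallback is needed there. What the reference does \emph{not} assert is that $\overline{\lambda}$ is natural in $G$. Naturality is precisely the content of the proposition, since the whole chain must be natural with respect to $E$. You take naturality as part of the citation, and that step is unjustified as written.

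The paper fills this with a uniqueness argument. Write $A=\k[G]$, and let $BC_{\mathrm{norm}}A$ and $WA$ be the unreduced bar and $W$-constructions, which are free right $C_{\mathrm{norm}}A$-modules over their reduced versions. Then $\overline{\lambda}$ is induced by a morphism $\lambda\colon BC_{\mathrm{norm}}A\to WA$ of augmented right $C_{\mathrm{norm}}A$-modules. This $\lambda$ is the only such morphism whose restriction to $\overline{B}C_{\mathrm{norm}}A$ satisfies $s\lambda d=\lambda$, where $s$ is the contracting homotopy of $WA$ and $d$ the bar differential. Given $f\colon A_1\to A_2$, both $(Wf)\circ\lambda$ and $\lambda\circ BC_{\mathrm{norm}}f$ are morphisms $BC_{\mathrm{norm}}A_1\to WA_2$ satisfying the same conditions, by the way $\lambda$ is constructed in the reference. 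Hence they coincide, which gives naturality of $\lambda$ and so of $\overline{\lambda}$.

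Your fallback argument for multiplicativity also presupposes this naturality. It additionally needs a compatibility of $\lambda$ with the Eilenberg--Zilber maps for $G\times G$, which you do not establish, so it could not replace the citation in any case.
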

\begin{proof}
	The result is essentially proved in~\cite[Appendix~A]{gu-ma-74}. Namely, if $G$ is an abelian group, viewed as a constant simplicial object, the iterated $\overline{W}$-construction  yields an abelian EML space $\overline{W}^nG$ of height $n$ with $n$-th homotopy group naturally isomorphic to $G$ by~\cite[Proposition~A.21]{gu-ma-74}. Then lemma~\ref{lm:2} yields a zig-zag of quasi-isomorphisms of Hopf algebras between $C_*(E;\k)$ and $C_*(\overline{W}^n \pi_n E;\k)$, with $C_*(K(\pi_nE[n]);\k)$ in the middle of the zigzag.
	Thus, it suffices to show that there is a zig-zag of quasi-isomorphisms of Hopf algebras between $\overline{B}^n \k[\pi_nE]$ and $C_*(\overline{W}^n \pi_n E;\k)$, natural with respect to $\pi_nE$. 
	
	Write $\pi:=\pi_nE$ for short. There is an isomorphism $\overline{B}\k[\pi]\simeq C_*(\overline{W}\pi;\k)$ natural with respect to $\pi$ by~\cite[Lemma~A.15]{gu-ma-74}, which finishes the proof 
	for $n=1$. We prove the general case by induction on $n$. Assume that the results holds for $n-1$, hence there is a zig-zag of quasi-isomorphisms of Hopf algebras between $\overline{B}^{n-1}\k[\pi]$ and $C_*(\overline{W}^{n-1}\pi;\k)$, natural with respect to $\pi$. The bar construction $\overline{B}$ preserves quasi-isomorphisms of Hopf algebras, hence by applying it to the latter zig-zag we obtain a new zig-zag of quasi-isomorphisms of Hopf algebras between $\overline{B}^{n}\k[\pi]$ and $\overline{B}C_*(\overline{W}^{n-1}\pi;\k)$. Thus, to finish the proof, it suffices to construct a quasi-isomorphism of Hopf algebras 
	\[
    \overline{\lambda}\colon \overline{B}C_*(\overline{W}^{n-1}\pi;\k)\to C_*(\overline{W}^{n}\pi;\k)\;,
    \]
	natural with respect to $\pi$. 
	Such a quasi-isomorphism is provided by~\cite[Theorem~A.20]{gu-ma-74} by substituting the simplicial Hopf $\k$-algebra $\k[\overline{W}^{n-1}\pi]$ into $A$, in the reference's notation. This finishes the proof for all $n$.
	
	Note that~\cite[Theorem~A.20]{gu-ma-74} does not explicitly assert that $\overline{\lambda}$ is natural with respect to $\pi$, but it is actually the case, and we will now briefly explain why. Let us change the notations and let us denote now by $\overline{W}A$ the normalized chains of the $\overline{W}$ construction of a simplicial augmented algebra $A$ as in~\cite[Definition~A.14]{gu-ma-74}. Let $BC_{\mathrm{norm}}A$ and $WA$ denote the unreduced versions of $\overline{B}C_{\mathrm{norm}}A$ and $\overline{W}A$. Thus $\overline{B}C_{\mathrm{norm}}A$ and $\overline{W}A$ are graded $\k$-submodules of $BC_{\mathrm{norm}}A$ and $WA$, and the latter are actually free right $C_{\mathrm{norm}}A$-modules over the former, and also augmented over $\k$. The statement of~\cite[Theorem~A.20]{gu-ma-74} asserts that there is a unique morphism of augmented right $C_{\mathrm{norm}}A$-modules 
	$$\lambda: BC_{\mathrm{norm}}A\to WA$$
	which preserves the reduced submodules, and $\overline{\lambda}$ is induced by $\lambda$. In order to justify that $\overline{\lambda}$ is natural with respect to $A$, it suffices to prove that $\lambda$ is natural with respect to $A$. Let $s$ denote the contracting homotopy of $WA$ and let $d$ denote the differential of $BC_{\mathrm{norm}}A$. Given a morphism of simplicial commutative Hopf algebras $f:A_1\to A_2$, there is at most one morphism of augmented right $C_{\mathrm{norm}}A_1$-modules $\lambda_{12}:BC_{\mathrm{norm}}A_1\to WA_2$ whose restriction to $\overline{B}C_{\mathrm{norm}}A$ satisfies $s\lambda_{12}d=\lambda_{12}$. Now by the construction of $\lambda$ given in the first line of the proof of~\cite[Theorem~A.20]{gu-ma-74} the compositions $(Wf)\circ \lambda$ and 
	$\lambda\circ BC_\mathrm{norm}f$ both satisfy these conditions, hence they are equal, which proves the naturality of $\lambda$.
\end{proof}

\begin{corollary}\label{cor:reductiontobar}
	Let $\Dd=\mathrm{DGMod}_\k$, $\mathrm{ACDGAlg}_\k$ or $\mathrm{CDGHopf}_\k$.
	A diagram of abelian EML spaces $D\colon\Cc\to \sset_{*}$ of height $n\geq 1$ is formal in $\Dd$ is and only if the following functor is formal:
	\[
	\begin{array}{ccc}
		\Cc & \to & \Dd \\
		c & \mapsto & \overline{B}^n \k[\pi_nD(c)]
	\end{array} \;.
	\]
\end{corollary}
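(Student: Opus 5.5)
The plan is to apply proposition~\ref{pr:zigzagforEML} objectwise and check that the resulting zig-zags fit together into zig-zags of diagrams, with homology correctly identified. Set $E(c)=D(c)$, an abelian EML space of height $n$ for every $c$, and $\pi(c)=\pi_nD(c)$. The assignment $c\mapsto\pi(c)$ is a functor $\Cc\to\Ab$. Proposition~\ref{pr:zigzagforEML} gives a zig-zag of quasi-isomorphisms of graded commutative dg Hopf $\k$-algebras
\[
C_*(E;\k)\simeq \cdots \simeq \overline{B}^n\k[\pi_nE],
\]
and it is natural with respect to morphisms of abelian EML spaces $E$. The middle terms are $C_*(K(\pi_nE[n]);\k)$, $C_*(\overline{W}^n\pi_nE;\k)$, $\overline{B}^{n-i}C_*(\overline{W}^i\pi_nE;\k)$, and the intermediate objects from lemma~\ref{lm:2}. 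Each of these is functorial in $E$ or in $\pi_nE$. Precomposing with $D$ therefore yields a zig-zag of natural transformations of functors $\Cc\to\mathrm{CDGHopf}_\k$,
\[
C_*(D(-);\k)\simeq\cdots\simeq\overline{B}^n\k[\pi(-)],
\]
which is objectwise a zig-zag of quasi-isomorphisms. Forgetting structure gives the same zig-zag in $\mathrm{ACDGAlg}_\k$ and in $\mathrm{DGMod}_\k$. This is the step I expect to need the most care: one has to make sure that naturality really holds for all morphisms of simplicial abelian groups. For lemma~\ref{lm:2} this comes from the natural truncation zig-zag $C\leftarrow \tau_{\ge n}C\to H_n(C)[n]$ after Dold--Kan. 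For the bar/$\overline{W}$ comparison $\overline{\lambda}$ it is the naturality explained at the end of the proof of proposition~\ref{pr:zigzagforEML}.

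Next I show that formality transfers along such a zig-zag in both directions. A natural quasi-isomorphism $f\colon M\to N$ of diagrams in $\Dd$ induces a natural isomorphism $H_*(f)\colon H_*(M)\to H_*(N)$ of diagrams in $\Dd$ with zero differentials. This holds for Hopf algebras too: under the standing flatness convention, $H_*$ is compatible with the coproduct, and then $H_*(f)$ is an isomorphism of Hopf algebras because it is a bijective morphism of Hopf algebras. Now suppose $\overline{B}^n\k[\pi(-)]$ is formal. Concatenating the zig-zag to $C_*(D(-);\k)$, the formality zig-zag, and the isomorphism between the homologies (viewed as a one-step zig-zag of quasi-isomorphisms) gives a natural zig-zag of quasi-isomorphisms from $C_*(D(-);\k)$ to $H_*(D(-);\k)$. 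The converse is the same argument read backwards, since zig-zags can be traversed in either direction.

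Finally I check two small points. First, $H_*(D(c);\k)$ with its structure in $\Dd$ agrees with $H_*(\overline{B}^n\k[\pi(c)])$ through this identification. Second, the flatness hypothesis needed to view homology as a Hopf algebra holds for one side exactly when it holds for the other, because the two homologies are isomorphic.
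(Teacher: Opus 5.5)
Your proposal is correct and follows the paper's route: the corollary is stated there without separate proof as an immediate consequence of Proposition~\ref{pr:zigzagforEML}. You precompose its zig-zag of Hopf-algebra quasi-isomorphisms, which is natural in the abelian EML space $E$, with $D$, forget the structure down to $\Dd$, and transfer formality in both directions along the induced isomorphism of homologies, exactly as you describe. For the Hopf case, the coproduct on the homology of the intermediate objects is covered by the paper's standing flatness convention. Your appeal to it is also legitimate concretely, since every object in the zig-zag is a nonnegatively graded complex of free $\k$-modules whose homology is isomorphic to $H_*(D(c);\k)$, so Künneth applies.
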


\section{Formality results for diagrams of EML spaces}\label{sec-formality}

The main result of this section is the following theorem.

\begin{theorem}\label{thm:formQ}
	Let $D\colon\Cc\to \sset_*$ be a diagram of abelian EML spaces of a fixed height $n\ge 1$. Let $\k$ be a $\Q$-algebra.
	Then $D$ is formal in $\mathrm{ACDGAlg}_\k$ if $n=1$, and it is formal in $\mathrm{CDGHopf}_\k$ if $n\ge 2$. 
\end{theorem}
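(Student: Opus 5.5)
By corollary~\ref{cor:reductiontobar}, it suffices to prove that the functor $G\mapsto \overline{B}^n\k[G]$, from abelian groups to $\mathrm{ACDGAlg}_\k$ (for $n=1$) or to $\mathrm{CDGHopf}_\k$ (for $n\ge 2$), is formal. Indeed, precomposing a natural zig-zag of quasi-isomorphisms with $c\mapsto \pi_nD(c)$ yields the required natural zig-zag for $D$. So the whole problem becomes constructing zig-zags that are natural in the abelian group $G$, and I will build them by induction on $n$ through the bar construction.

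\textbf{First step: replace $\k[G]$ by a formal model.} Since $\k\supseteq\Q$, the group algebra should be linked to a free commutative algebra on $G\otimes\k$ placed in degree $1$ after one bar construction. I will construct a natural morphism of augmented cdg algebras
\[
\Lambda_\k(G\otimes\k[1])\to \overline{B}\k[G],
\]
with source the exterior algebra on $G\otimes \k$ in degree $1$, zero differential, and the primitive coproduct. The map sends $g\otimes 1$ to the class of $[\log(g)]$. Here $\log(g)=\sum_{k\ge1}(-1)^{k+1}(g-1)^k/k$, and this series does not converge in $\k[G]$. The standard remedy is to pass through the $I$-adic completion or through the filtered colimit over finitely generated subgroups. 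Since $\overline{B}$ commutes with filtered colimits, it is enough to build compatible maps for finitely generated $G$.

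A cleaner route, which I would try first, is to map into homology-level representatives directly. Send $g\in G$ to the degree-$1$ cycle $[g]$, or rather to $[g]-[1]$ in reduced normalized form. In $\overline{B}\k[G]$ we have $d[g|h]=[g]\cdot[h]$-type relations, so $[gh]-[g]-[h]$ is a boundary. A map $G\otimes\k\to Z_1\overline{B}\k[G]$ is therefore additive only up to boundaries, and it does not strictly define an algebra map. This is the main obstacle: producing a \emph{strictly natural} algebra map. To overcome it, I will use the rational model $\k[G]\to \widehat{\k[G]}$, the completion at the augmentation ideal, which is canonically isomorphic to the completed symmetric algebra $\widehat{S}(G\otimes\k)$ via $\log$. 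This identification holds for finitely generated $G$ with torsion killed rationally, since torsion elements become idempotent-trivial after completion. I would check that $\overline{B}$ of the completion map is a quasi-isomorphism, which follows because $\mathrm{Tor}^{\k[G]}(\k,\k)=\Lambda(G\otimes\k)$ is unchanged by completion over $\Q$. We then get a natural zig-zag
\[
\overline{B}\k[G]\to \overline{B}\widehat{S}(G\otimes\k)\leftarrow \overline{B}S(G\otimes\k)\to \Lambda(G\otimes\k[1]),
\]
where the last map is the standard Koszul quasi-isomorphism, sending $[v]$ to $v$ and killing longer bar words. That last map is a morphism of Hopf algebras because $S(V)$ is a polynomial algebra with primitive generators.

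\textbf{Induction.} Write $F_n(G)$ for the free graded commutative algebra on $G\otimes\k$ in degree $n$, equipped with the primitive Hopf structure. Assume we have a natural zig-zag of quasi-isomorphisms of cdg Hopf algebras between $\overline{B}^{n}\k[G]$ and $F_n(G)$. Since $\overline{B}$ preserves quasi-isomorphisms, applying it gives a natural zig-zag between $\overline{B}^{n+1}\k[G]$ and $\overline{B}F_n(G)$. Over $\Q$, the Koszul projection $\overline{B}F_n(G)\to F_{n+1}(G)$, given by $[v]\mapsto sv$ with longer words sent to $0$, is a natural quasi-isomorphism of Hopf algebras. We check this because $F_n(G)$ is free graded commutative on a module $V$ that is flat over $\k$, and flatness reduces the check to filtered colimits of finite free $V$. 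In homology $F_{n+1}(G)\cong H_*(\overline{B}^{n+1}\k[G])$, which gives the formality statement.

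For $n=1$ the intermediate map through $\overline{B}\widehat{S}$ may only respect the algebra structure and not the coproduct, since completion interacts badly with the coproduct. This is why the statement only claims $\mathrm{ACDGAlg}_\k$ in that case. The delicate point throughout remains the first step, namely the naturality and quasi-isomorphism claims for the completion.
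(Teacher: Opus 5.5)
Your reduction to $\overline{B}^n\k[-]$ and the algebra-level part of your argument are workable. The gap is in the Hopf claim for $n\ge 2$: the ``Koszul projection'' $\overline{B}F_n(G)\to F_{n+1}(G)$ is not a morphism of coalgebras, so the zig-zag you build only lives in $\mathrm{ACDGAlg}_\k$.

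Read literally, killing all bar words of length $\ge 2$ is not even multiplicative, since $[v]*[w]$ has length $2$ while $sv\cdot sw\neq 0$. The multiplicative version is $[x_1|\cdots|x_p]\mapsto \pm\frac{1}{p!}\,s\pi(x_1)\cdots s\pi(x_p)$, which is the analogue of the paper's map $e$. This map does not commute with coproducts. Deconcatenation gives $\overline{\Delta}[v|w]=\pm[v]\otimes[w]$, whereas in $F_{n+1}$ the reduced coproduct of $sv\cdot sw$ is the graded-symmetrized tensor $sv\otimes sw\pm sw\otimes sv$.

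In fact no coalgebra quasi-isomorphism exists in that direction, already over $\k=\Q$:
\begin{itemize}
\item If $f\colon\overline{B}S(V)\to\Lambda(V[1])$ is a coalgebra map, then $f[x]\otimes f[x]=\overline\Delta f[x|x]$ must be both symmetric and antisymmetric, which forces $f[x]=0$.
\item If $f\colon\overline{B}\Lambda(V[1])\to S(V[2])$ is a coalgebra map, then every $f[a]\otimes f[b]$ must be symmetric, which forces $f$ to have rank at most one in degree $2$.
\end{itemize}
This is the usual phenomenon that $\overline{B}A$ is cofree as a coalgebra: coalgebra maps \emph{into} it are easy to produce, maps out of it are not. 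Your justification (``polynomial algebra with primitive generators'') does not address this. Your diagnosis at $n=1$ is also misplaced. The arrows $\overline{B}(\k[G]\to\widehat{\k[G]})$ and $\overline{B}(S\to\widehat{S})$ are Hopf maps, as is $\overline{B}$ of any augmented algebra map. The only non-Hopf arrow is the final Koszul map, which is exactly the one you reuse in the induction.

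The fix is to reverse that last arrow, as the paper does. The inclusions $i_\Gamma\colon\Gamma(V[2i+2])\to\overline{B}\Lambda(V[2i+1])$ and $i_\Lambda\colon\Lambda(V[2i+1])\to\overline{B}S(V[2i])$, of symmetric and antisymmetric tensors respectively, are Hopf quasi-isomorphisms (lemma~\ref{lm:formalitySLambda}), and $\Gamma\cong S$ over $\Q$. Your induction hypothesis should then be stated in $\mathrm{ACDGAlg}_\k$ at level $n$, since at $n=1$ you have no Hopf zig-zag. That suffices, because $\overline{B}$ turns algebra zig-zags into Hopf zig-zags.

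Finally, the $\log$/completion detour is unnecessary. Over a non-noetherian $\Q$-algebra its Tor argument would also need care; you would have to run it over $\Q$ and base change. The obstruction you met disappears if you map \emph{out of} the bar construction, as the paper does with
\[
e\colon\overline{B}_u\k[G]\to\Lambda(G\otimes_\Z\k[1]),\qquad [b_{g_1}|\cdots|b_{g_p}]\mapsto\tfrac{1}{p!}(g_1\otimes 1)\wedge\cdots\wedge(g_p\otimes 1).
\]
This is a chain map because $d[b_g|b_h]=[b_g]+[b_h]-[b_{g+h}]$ and $g\mapsto g\otimes 1$ is additive on the nose.
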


For low values of $n$, one can relax the hypothesis that $\k$ is a $\Q$-algebra. To be more specific, if $\k$ is an arbitrary commutative ring, we say that an abelian group $G$ is \emph{$\k$-torsion free} if $\Tor_1^\Z(\k,G)=0$, or equivalently if the order of every torsion element $g\in G$ is invertible in $\k$. We denote by $\Lambda_\k(V)$ the exterior algebra of a $\k$-module $V$, and we define the \emph{$\k$-dimension of $G$} by: 
\[
	\dim(G,\k):=\sup\{d \;|\; \Lambda_\k^d(G\otimes_\Z\k)\ne 0\} = \sup\{d \;|\; \Lambda_\Z^d(G)\otimes_\Z\k \ne 0\} \in \N\cup\{+\infty\}\;.
\] 
Then we have the following enhancement of theorem~\ref{thm:formQ} for $n\le 2$.

\begin{theorem}\label{thm:formsmalldim}
	Let $D:\Cc\to \sset_*$ be a diagram of abelian EML spaces of height $n$. Assume that for all objects $c$ of the indexing category the next two conditions are satisfied:
	\begin{enumerate}
		\item[(1)] the abelian group $\pi_nD(c)$ is $\k$-torsion free,
		\item[(2)] every positive integer less or equal to $\dim(\pi_nD(c),\k)$ is invertible in $\k$.
	\end{enumerate}
	Then $D$ is formal in $\mathrm{ACDGAlg}_\k$ if $n=1$, and it is formal in $\mathrm{CDGHopf}_\k$ if $n= 2$.   
\end{theorem}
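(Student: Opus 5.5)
By corollary~\ref{cor:reductiontobar}, it suffices to prove that the functor $c\mapsto \overline{B}^n\k[G_c]$, where $G_c=\pi_nD(c)$, is formal in $\mathrm{ACDGAlg}_\k$ when $n=1$ and in $\mathrm{CDGHopf}_\k$ when $n=2$. Since $D$ only enters through the composite $\Cc\to\Ab$, $c\mapsto G_c$, it is enough to build a zig-zag that is natural in the abelian group $G$, for $G$ ranging over groups satisfying (1) and (2).

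The plan is to construct a direct natural quasi-isomorphism
\[
\phi_G\colon \Lambda_\k(G\otimes_\Z\k)[1]\longrightarrow \overline{B}\k[G],
\]
where the exterior algebra has generators in degree $1$. It is defined on generators by $g\otimes 1\mapsto [\,[g]-[0]\,]$, the class of $g$ in $\overline{B}_1$ (or $[\log$-type$]$ variant if needed), and extended multiplicatively using the shuffle product. Degree-one bar elements $[x]$ satisfy $[x]*[x]=0$ up to sign, because the shuffle product of two length-one words is $[x|y]\pm[y|x]$. Hence $\phi_G$ is a morphism of commutative dg algebras. It is also a Hopf map if the generators go to primitives; the elements $[[g]-[0]]$ are indeed primitive in $\overline{B}$, since the bar coproduct deconcatenates.

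To see that $\phi_G$ is a quasi-isomorphism, I would first treat $G=\Z$ and $G=\Z/m$ with $m$ invertible, and then finitely generated groups using the Künneth isomorphism $\overline{B}\k[G\oplus H]\cong \overline{B}\k[G]\otimes\overline{B}\k[H]$ up to natural quasi-isomorphism. Then pass to filtered colimits, since both sides commute with them. The computation is $H_*(\overline{B}\k[G])=\Tor^{\k[G]}(\k,\k)$. Hypothesis (1) makes the torsion contribute only $\k$, and the $\Z$-free part gives an exterior algebra over $\k$, matching $\Lambda_\k(G\otimes\k)$. Hypothesis (2) is not needed here.

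For $n=2$, I apply $\overline{B}$ to $\phi_G$. Since $\overline{B}$ preserves quasi-isomorphisms of augmented cdgas (with flatness, which $\Lambda(G\otimes\k)$ has given torsion-freeness considerations, possibly via a filtered colimit argument), this gives a natural quasi-isomorphism of Hopf algebras $\overline{B}\Lambda_\k(G\otimes\k)[1]\to\overline{B}^2\k[G]$. It then remains to prove that the functor $V\mapsto \overline{B}\Lambda_\k(V[1])$ is naturally formal as a Hopf algebra, with homology the divided power algebra $\Gamma_\k(V[2])$. Here I use a natural quasi-isomorphism going the other way, $\overline{B}\Lambda(V[1])\to\Gamma(V[2])$: project onto the sub-quotient of bar words made of length-one degree-one letters, so that $[v_1|\dots|v_k]\mapsto$ the symmetrization-type product in $\Gamma^k$. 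This map kills words containing products. I must check that it is a morphism of Hopf algebras and a quasi-isomorphism.

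The main obstacle is the last check, and this is where (2) enters. Being a quasi-isomorphism needs the computation $\Tor^{\Lambda(V)}(\k,\k)\cong\Gamma(V)$, which holds for flat $V$. Naturality and compatibility with products, however, involve identifying shuffle products with divided power products, and denominators $k!$ appear when comparing with symmetric powers. Hypothesis (2) guarantees that $\Lambda^d(V)=0$ beyond the invertible range, so $\Gamma^k$ and $S^k$ agree where it matters. If the direct map fails to be multiplicative, I would instead compare both sides to $S_\k(V[2])$ via the norm maps, using that $k!$ is invertible for every relevant $k$.
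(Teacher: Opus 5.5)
Your reduction via corollary~\ref{cor:reductiontobar} is fine. The proof fails at the next step, because there is no natural quasi-isomorphism $\phi_G\colon\Lambda(G\otimes_\Z\k[1])\to\overline{B}\k[G]$.

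\textbf{The map $\phi_G$ cannot exist.} Your assignment $g\otimes 1\mapsto[b_g-b_0]$ is not additive. Indeed $[b_{g+h}-1]-[b_g-1]-[b_h-1]=[(b_g-1)(b_h-1)]$, which is nonzero in $\overline{B}_1$; it is only the boundary of $-[b_g-1|b_h-1]$. So the assignment gives a map on homology, but not a $\k$-linear map on $G\otimes_\Z\k$. A logarithm would need all denominators and convergence in $\k[G]$, and both fail already for $G=\Z$. No repair is possible either. Suppose $x\in\overline{\k[t^{\pm1}]}=\overline{B}_1\k[\Z]$ is the image of $1\otimes 1$ under a natural $\k$-linear map. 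Naturality under multiplication by $2$ gives $x(t^2)=2x(t)$, and comparing coefficients of extreme degree forces $x=0$. The paper records this obstruction after proposition~\ref{prop:level1formalityQ}, via lemma~\ref{lm:pira}.

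The diagram $\Z\xrightarrow{\cdot 2}\Z$ satisfies (1) and (2) for every $\k$. So any natural chain map in your direction is zero on $H_1=G\otimes_\Z\k$, and your $n=2$ argument, which starts from $\overline{B}(\phi_G)$, falls with it. This also explains why (2) looked unnecessary to you for $n=1$. Hypothesis (2) is exactly what makes a natural map possible in the \emph{opposite} direction. The paper uses $e\colon\overline{B}\k[G]\to\Lambda(G\otimes_\Z\k[1])$, with $[b_{g_1}|\cdots|b_{g_p}]\mapsto\frac{1}{p!}(g_1\otimes1)\wedge\cdots\wedge(g_p\otimes1)$ for $p\le\dim(G,\k)$ and $0$ otherwise. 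The factor $1/p!$ makes $e$ multiplicative for the shuffle product, $e$ kills boundaries, and it is a quasi-isomorphism because both homologies are exterior algebras on $H_1$.

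\textbf{The $n=2$ step is also reversed.} Your symmetrization map $\overline{B}\Lambda(V[1])\to\Gamma(V[2])$ is a chain map but not a coalgebra map. Test it on $[v|w]$ with $v,w$ independent: deconcatenation gives a single term in bidegree $(1,1)$, while the coproduct of $vw\in\Gamma^2(V)$ has the two terms $v\otimes w$ and $w\otimes v$.

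Your fallback through $S(V[2])$ cannot work either. Hypothesis (2) only bounds the exterior powers of $V=G\otimes_\Z\k$. It says nothing about $\Gamma^k(V)$ versus $S^k(V)$, which are nonzero in every weight $k$ (already for $V=\k$). Concretely, for $\k=\Z$ and $G=\Z$, which satisfy (1) and (2), the homology of $\overline{B}^2\Z[\Z]$ is $\Gamma(\Z[2])$, which is not isomorphic to $S(\Z[2])$ as an algebra.

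What works, with no denominators at all, is the natural inclusion $i_\Gamma\colon\Gamma(V[2])\to\overline{B}\Lambda(V[1])$. This is a Hopf quasi-isomorphism for flat $V$ by lemma~\ref{lm:formalitySLambda}. Take $V=G\otimes_\Z\k$, which is flat by (1). You then get the zig-zag of Hopf quasi-isomorphisms $\overline{B}^2\k[G]\xrightarrow{\overline{B}(e)}\overline{B}\Lambda(G\otimes_\Z\k[1])\xleftarrow{i_\Gamma}\Gamma(G\otimes_\Z\k[2])$.
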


\begin{example}\label{ex:kfact}
	If $\k=\Z[1/k!]$ then every abelian group is $\k$-torsion free. In this case, theorem shows that every diagram $D:\Cc\to \sset_*$ of abelian EML spaces of height $n$ such that $\dim(\pi_nD(c),\k)\le k$ for all $c$ is formal in $\mathrm{ACDGAlg}_\k$ if $n=1$, and it is formal in $\mathrm{CDGHopf}_\k$ if $n= 2$.   
\end{example}

By corollary~\ref{cor:reductiontobar}, in order to prove theorems~\ref{thm:formQ} and~\ref{thm:formsmalldim}, we can restrict our attention to formality of reduced bar constructions. So we start by recalling some explicit formulas for reduced bar constructions, following~\cite{gu-ma-74}.

\subsection{Some explicit formulas for bar constructions}
We first recall the \emph{un-normalized reduced bar construction} $\overline{B}_u A$ of an augmented graded commutative dg-algebra $A$. Let $A[1]$ denote the graded $\k$-module such that $A[1]_{i+1}=A_i$. As a graded $\k$-module 
$$\overline{B}_u A=\bigoplus_{p\ge 0} A[1]^{\otimes p}\;.$$
Given homogeneous elements $x_i\in A$, we denote by $[x_1|\cdots|x_p]$ the tensor product $x_1\otimes\cdots\otimes x_p$ viewed as an element of $A[1]^{\otimes p}$. Thus, $[x_1|\dots|x_p]$ has degree $p+\sum_{1\le i\le p}\deg x_i$. For all $i$, we set $\sigma_i=\deg x_1+\dots+\deg x_i$.
Then the product of $\overline{B}_u A$ is the \emph{shuffle product} $*$ defined by 
\[
[x_1|\cdots|x_p]*[x_{p+1}|\cdots|x_{p+q}]=\sum_{t\in \Sigma_{p,q}} (-1)^{q\sigma_p}\epsilon_t\epsilon'_t [x_{t^{-1}(1)}|\cdots|x_{t^{-1}(p+q)}]\]
where $\Sigma_{p,q}$ denotes the set of $(p,q)$-shuffles, $\epsilon_t$ is the signature of a shuffle, $\epsilon'_t$ is the sign such that $x_1\cdots x_{p+q}$ is equal to $ \epsilon'_t x_{t^{-1}(1)}\cdots x_{t^{-1}(p+q)}$ in the free graded commutative algebra on $x_1,\dots,x_n$. The unit is the element $[\;]=1\in\k=A[1]^{\otimes 0}$. The augmentation $\epsilon$ is uniquely determined by $\epsilon(1)=1$. The coproduct $\Delta$ is the \emph{deconcatenation coproduct} defined by
$$\Delta[x_1|\cdots|x_p]=\sum_{0\le i\le p}(-1)^{(p-i)\sigma_i}[x_1|\cdots|x_i]\otimes [x_{i+1}|\cdots|x_p]\;.$$
Finally, the differential is defined by:
\begin{align*}
	d[x_1|\cdots|x_p]=& 
	\epsilon(x_1)[x_2|\cdots|x_p] + (-1)^{p}\epsilon(x_p)[x_1|\cdots|x_{p-1}]
	+\sum_{i=1}^{p-1}(-1)^i[x_1|\cdots|x_ix_{i+1}|\cdots |x_p]\\
	&+\sum_{i=1}^p (-1)^{p+\sigma_{i-1}}[x_1|\cdots|dx_i|\cdots|x_p].
\end{align*}

\begin{remark}\label{rem:signs}
	Our signs follow the conventions of~\cite{gu-ma-74}, which differ from the usual sign conventions, found for example in~\cite{ml-63}, but which coincide with them if $A$ is concentrated in even degrees. 
\end{remark}

The \emph{reduced normalized bar construction} $\overline{B}A$ is the quotient of $\overline{B}_uA$ by the graded $\k$-submodule generated by the elements $[x_1|\cdots|x_p]$ in which at least one of the $x_i$ is equal to $1_A$. The quotient $\overline{B}_uA\to \overline{B}A$ is a quasi-isomorphism of graded commutative dg-Hopf $\k$-algebras.

\begin{remark}\label{rem:reducednormalized}
	Let $\overline{A}=\ker \epsilon$ be the augmentation ideal of $A$.
	Then $\overline{B}'A=\bigoplus_{p\ge 0} \overline{A}[1]^{\otimes p}$ is actually a sub-dg Hopf algebra of $\overline{B}_uA$. Restriction to $\overline{B}'A$ of the quotient $\overline{B}_uA\to \overline{B}A$ yields a natural isomorphism $\overline{B}'A\simeq \overline{B}A$, hence the inclusion $\overline{B}'A\hookrightarrow \overline{B}_uA$ is a quasi-isomorphism.
	In particular, we can work indifferently with $\overline{B}'A$, $\overline{B}A$ or $\overline{B}_uA$ to study formality.
\end{remark}

Next we recall the explicit computation of the homology of reduced bar constructions of symmetric and exterior algebras. 
Let $V$ be a flat $\k$-module. 
Let $S(V[2i])$ and $\Lambda(V[2i+1])$ denote the symmetric and exterior algebras on $V$, where $V$ is viewed as a graded module concentrated in degree $2i$ and $2i+1$ respectively (to lighten the notation, we drop the index $\k$ on $\Lambda$ and $S$).
The canonical inclusion $\Lambda^p(V[2i+1])\subset V[2i+1]^{\otimes p}\subset S(V[2i])[1]^{\otimes p}$ yields an inclusion of dg-Hopf algebras 
$$i_{\Lambda}:\Lambda(V[2i+1])\to \overline{B}S(V[2i])\;.$$

Similarly, let $\Gamma(V[2i+2])$ be the free divided power $\k$-algebra on $V$ viewed as a graded module concentrated in degree $2i+2$. If $\k$ is a $\Q$-algebra, then $\Gamma(V[2i+2])$ is isomorphic to $S(V[2i+2])$ as a graded Hopf algebra. In general, $\Gamma(V[2i+2])$ is the Hopf subalgebra of the shuffle tensor algebra on $V[2i+2]$ which is equal in degree $p(2i+2)$ to the subspace $\Gamma^p(V)$ of the invariants of $V^{\otimes p}$ under the action of $\Sigma_p$.  Multiplying by $(-1)^p$ the canonical inclusion $\Gamma^p(V[2i+2])\subset V[2i+2]^{\otimes p}\subset \Lambda(V[2i+1])[1]^{\otimes p}$ we obtain an inclusion of dg-Hopf algebras:
$$i_{\Gamma}:\Gamma(V[2i+2])\to \overline{B}\Lambda(V[2i+1])\;.$$

\begin{lemma}\label{lm:formalitySLambda}
	For all nonnegative integers $i$ and for all flat $\k$-modules $V$, the maps $i_\Lambda$ and $i_\Gamma$ are quasi-isomorphisms.
\end{lemma}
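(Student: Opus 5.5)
Both claims are classical computations (Cartan, Eilenberg--MacLane), and I would prove them by filtering and reducing to the rank-one free case. The plan has four steps.

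\textbf{Step 1: these are chain maps of Hopf algebras.} This is a direct check from the explicit formulas. For $i_\Lambda$: an element $v_1\otimes\cdots\otimes v_p$ with all $v_j\in V[2i]$ lies in the augmentation ideal. The products $v_jv_{j+1}$ lie in $S^2$, and their alternating images cancel on the antisymmetrized tensor. For $i_\Gamma$: the products $v_jv_{j+1}\in\Lambda^2$ cancel on symmetric invariants, because of the Koszul signs. Compatibility with the shuffle product and the deconcatenation coproduct holds because the image is spanned by (anti)symmetric tensors. The sign $(-1)^p$ in $i_\Gamma$ is what makes this match Remark~\ref{rem:signs}. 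This step is routine.

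\textbf{Step 2: reduce to free $V$.} Both sides commute with filtered colimits in $V$, and $\overline{B}$ and $\Lambda,S,\Gamma$ are compatible with this. By Lazard's theorem, a flat $V$ is a filtered colimit of finitely generated free modules. Homology commutes with filtered colimits. So it suffices to treat $V=\k^r$.

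\textbf{Step 3: reduce to $r=1$.} For $V=V_1\oplus V_2$ we have $S(V)=S(V_1)\otimes S(V_2)$, and the same holds for $\Lambda$ and $\Gamma$. There is a natural quasi-isomorphism $\overline{B}(A_1)\otimes\overline{B}(A_2)\to\overline{B}(A_1\otimes A_2)$, given by the shuffle product of the two inclusions; this uses that $\overline{B}$ lands in commutative Hopf algebras. Both $i_\Lambda$ and $i_\Gamma$ are multiplicative, so they are compatible with this decomposition. This requires a Künneth argument, which is available because everything is $\k$-free.

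\textbf{Step 4: the rank-one case, where the main work lies.}

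For $i_\Gamma$, take $A=\Lambda(x)$ with $x$ odd. Here $\overline{A}=\k x$, so $\overline{B}'A$ has basis $[x|\cdots|x]$. The differential vanishes, since $x^2=0$ and $dx=0$. By Remark~\ref{rem:reducednormalized}, $\overline{B}A$ is therefore $\k$ in each degree $p(2i+2)$, with zero differential. The map $i_\Gamma$ sends $\gamma_p(x)$ to $\pm[x|\cdots|x]$, so it is an isomorphism.

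For $i_\Lambda$, take $A=\k[y]$ with $y$ of even degree. Here $\overline{B}'A$ has basis $[y^{a_1}|\cdots|y^{a_p}]$ with $a_j\ge1$. I would compare with the Koszul resolution: the twisted tensor product $\k[y]\otimes\Lambda(sy)$ is acyclic, as is $\k[y]\otimes B\k[y]$. This gives a comparison of resolutions, hence $\mathrm{Tor}^{\k[y]}(\k,\k)=\Lambda(sy)$. Alternatively, one can write an explicit contracting homotopy on the part of the weight-$m$ piece not spanned by $[y]$. I would use a grading by total polynomial weight $\sum a_j$ and check exactness weight by weight. I expect the bookkeeping of signs in this step to be the main obstacle.
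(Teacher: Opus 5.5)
Your proposal is correct and is essentially the paper's argument: Lazard's theorem to reduce to finite free $V$, then the shuffle-product homotopy equivalence $\overline{B}A_1\otimes\overline{B}A_2\to\overline{B}(A_1\otimes A_2)$ to reduce to rank one (the paper cites Gugenheim--May, Proposition~A.3, for what you call the K\"unneth argument), then an elementary rank-one computation; the only difference is that the paper runs this for $i_\Gamma$ alone and cites the classical result for $i_\Lambda$, because $\overline{B}S(V[2i])$ is built on an evenly graded algebra, where the Gugenheim--May signs are the usual ones. One caveat on your Step~1: the displayed shuffle formula gives $[v]*[w]=-[v|w]-[w|v]$ for $v,w\in V[2i+1]$, so a weight-$p$ sign making $i_\Gamma$ multiplicative is $(-1)^{\binom p2}$, not $(-1)^p$; this is harmless, since the two maps differ by an automorphism of the underlying graded $\k$-module of $\Gamma(V[2i+2])$, so you can run Step~3 with the multiplicative version and the quasi-isomorphism conclusion transfers.
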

\begin{proof}
	The result is well-known, see e.g.~\cite[Chapter~3, Ex.~3.2.5]{lv-12}, but our signs on bar constructions differ from the usual ones for $\overline{B}\Lambda(V[2i+1])$, see remark~\ref{rem:signs}. So we must explain why the result still holds with the sign conventions of~\cite{gu-ma-74} that we follow. Observe first that both the source and the target of $i_{\Gamma}$ preserve filtered colimits of the variable $V$. Lazard's theorem asserts that every flat $\k$-module $V$ is a filtered colimit of free modules of finite rank, hence it suffices to prove that $i_\Gamma$ is a quasi-isomorphism when $V$ is free of finite rank. Naturality and compatibility of $i_{\Gamma}$ with products yields a commutative square for all $U$ and $V$ (in which the vertical arrows are induced by the products)
	\[
	\begin{tikzcd}
		\Gamma(U[2i+2])\otimes \Gamma(V[2i+2])\ar{d}{\simeq}\ar{r}{i_{\Gamma}\otimes i_{\Gamma}}   &\overline{B}\Lambda(U[2i+1])\otimes \overline{B}\Lambda(V[2i+1])\ar{d}\\
		\Gamma((U\oplus V)[2i+2])\ar{r}{i_{\Gamma}}& \overline{B}\Lambda((U\oplus V)[2i+1])
	\end{tikzcd}\;.
	\]
	The vertical morphism on the right is a homotopy equivalence by~\cite[Proposition~A.3]{gu-ma-74} and the definition of the shuffle product. Hence we have reduced ourselves to proving the result when $V$ is free of rank one, which is an elementary computation.
\end{proof}
	
\subsection{Proof of theorems~\ref{thm:formQ} and~\ref{thm:formsmalldim}}
Let $\k$ be a commutative ring. By corollary~\ref{cor:reductiontobar}, in order to establish theorems~\ref{thm:formQ} and~\ref{thm:formsmalldim}, it suffices to show that the iterated bar construction $\overline{B}^n\k[G]$ viewed as a functor of the abelian group $G$, is formal.
This will be done in propositions~\ref{prop:level1formalityQ},~\ref{prop:level2formalityQ} and~\ref{prop:higherlevelformalityQ} below.

For all abelian groups $G$, let $G\otimes_\Z\k[1]$ denote a copy of the $\k$-module $G\otimes_\Z\k$ placed in homological degree $1$. For all $\k$-torsion free abelian groups, there is a natural isomorphism of graded $\k$-algebras~\cite[V~Theorem~6.4~(ii)]{br-94}  (to lighten notations, we drop the index $\k$ on exterior algebras):
\[\psi:\Lambda(G\otimes_\Z\k[1])\xrightarrow[]{\simeq} H_*(\overline{B}\k [G])\;.\]
For all $d\in \mathbb{N}\cup\{+\infty\}$ we denote by 
$\Ab_{\le d,\k}$ the full subcategory of abelian groups whose objects are the abelian groups $G$ satisfying $\Tor_1^\Z(\k,G)=0$ and $\dim(G,\k)\le d$. In particular if $\k$ is a flat $\Z$-module then $\Ab_{\le +\infty,\k}=\Ab$. The following proposition proves the case $n=1$ of theorems~\ref{thm:formQ} and~\ref{thm:formsmalldim}.

\begin{proposition}
	\label{prop:level1formalityQ}
	Assume that all the positive integers less or equal to $d$ are invertible in $\k$. 
	Consider $\Lambda(G\otimes_\Z\k[1])$ as an augmented dg $\k$-algebra with zero differential. 
	Then for all $G$ in $\Ab_{\le d,\k}$ there is a quasi-isomorphism $e:\overline{B}\k[G]\to \Lambda(G\otimes_\Z\k[1])$ in $\mathrm{ACDGAlg}_\k$, natural with respect to $G$.
\end{proposition}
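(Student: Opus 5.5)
We construct $e$ explicitly on $\overline{B}'\k[G]\simeq\overline{B}\k[G]$ (remark~\ref{rem:reducednormalized}), using a logarithm map. Since $\k[G]$ sits in degree $0$, $\overline{B}\k[G]$ is the normalized bar complex, with $[g_1|\cdots|g_p]$ in degree $p$. The target $\Lambda(G\otimes_\Z\k[1])$ vanishes in degrees $>d$, because $\dim(G,\k)\le d$. We define
\[
e[g_1|\cdots|g_p]=\frac{1}{p!}\,(g_1\otimes 1)\wedge\cdots\wedge(g_p\otimes 1)\qquad(g_i\in G),
\]
extended $\k$-linearly on the un-normalized bar construction $\overline{B}_u\k[G]$. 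This makes sense because the right-hand side is zero when $p>d$, and for $p\le d$ the integer $p!$ is invertible in $\k$. It is also zero whenever some $g_i=0$, i.e.\ some $x_i=1_{\k[G]}$, so $e$ factors through the normalized quotient $\overline{B}\k[G]$. Naturality in $G$ is clear, and $e$ preserves the augmentation.

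\emph{Chain map.} The target has zero differential, so we must show $e\circ d=0$. From the formula for $d$ with $\epsilon(g)=1$ and $dg=0$, the term $d[g_1|\cdots|g_{p+1}]$ equals
\[
[g_2|\cdots]+\sum_{i=1}^{p}(-1)^i[\cdots|g_i+g_{i+1}|\cdots]+(-1)^{p+1}[g_1|\cdots|g_p].
\]
Applying $p!\,e$ and expanding $(g_i+g_{i+1})\otimes1$ by multilinearity, the alternating sum telescopes to $0$. This is the standard fact that $[g_1|\cdots|g_p]\mapsto g_1\wedge\cdots\wedge g_p$ is a cycle-killing map; it is a finite check.

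\emph{Algebra map.} We must check that $e(x*y)=e(x)\wedge e(y)$ for $x=[g_1|\cdots|g_p]$ and $y=[g_{p+1}|\cdots|g_{p+q}]$. Everything is in even internal degree $0$, so the signs in the shuffle product reduce to $\epsilon_t$ (up to the global sign $(-1)^{q\sigma_p}=1$). Each shuffle term maps to $\frac1{(p+q)!}\epsilon_t\,g_{t^{-1}(1)}\wedge\cdots=\frac1{(p+q)!}g_1\wedge\cdots\wedge g_{p+q}$. Summing over the $\binom{p+q}{p}$ shuffles gives $\frac{1}{p!q!}g_1\wedge\cdots\wedge g_{p+q}=e(x)\wedge e(y)$. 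When $p+q>d$ both sides vanish, and in that case the right-hand side is only formally written with $1/p!q!$, which is legitimate since $p,q\le d$ whenever the factors are nonzero. Sign conventions must be matched with remark~\ref{rem:signs}, but in degree $0$ they agree with the usual ones.

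\emph{Quasi-isomorphism.} This is the main point. We use the isomorphism $\psi:\Lambda(G\otimes_\Z\k[1])\to H_*(\overline{B}\k[G])$ of \cite{br-94}, which is valid because $G$ is $\k$-torsion free. It is multiplicative and sends $g\otimes1$ to the class of $[g]$, hence sends $g_1\wedge\cdots\wedge g_p$ to the class of $[g_1]*\cdots*[g_p]$.

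Since $e$ is an algebra map and $e[g]=g\otimes 1$, the composite $H_*(e)\circ\psi$ is an algebra endomorphism of $\Lambda(G\otimes_\Z\k[1])$ that is the identity on the generators $G\otimes\k$. Hence it is the identity, and $H_*(e)=\psi^{-1}$ is an isomorphism.

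The subtle point I expect is verifying that $\psi$ really is given on generators by $g\otimes1\mapsto[g]$ (or a unit multiple of it) in the cited reference. If only an abstract isomorphism were available, I would instead reduce, via filtered colimits and the Künneth/shuffle argument of lemma~\ref{lm:formalitySLambda}, to the cyclic case and compute there directly.
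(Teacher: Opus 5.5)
Your proposal is correct and follows essentially the same route as the paper. Both use the same explicit map $e[b_{g_1}|\cdots|b_{g_p}]=\frac{1}{p!}(g_1\otimes 1)\wedge\cdots\wedge(g_p\otimes 1)$, check directly that $ed=0$ and that $e$ is multiplicative, and reduce the quasi-isomorphism property to degree $1$ using Brown's theorem that $H_*(\overline{B}\k[G])$ is an exterior algebra on its degree-one part.

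The point you flag is harmless. Brown's $\psi$ is by definition the Pontryagin-product extension of the standard identification $G\otimes_\Z\k\cong H_1(\overline{B}\k[G])$, $g\otimes 1\mapsto [g]$. That identification is exactly the exactness of $\k[G]^{\otimes 2}\to\k[G]\to G\otimes_\Z\k\to 0$, which the paper checks to conclude that $H_1(e)$ is an isomorphism.
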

\begin{proof}
	By remark~\ref{rem:reducednormalized} it suffices to construct a quasi-isomorphism $e:\overline{B}_u\k[G]\to \Lambda(G\otimes_\Z\k [1])$.
	Let $b_x$, $x\in X$, denote the canonical basis elements of the free $\k$-module $\k[X]$ on a set $X$.
	The object of degree $p$ of $\overline{B}_u\k[G]$ is a free $\k$-module with basis the elements $[b_{g_1}|\dots|b_{g_p}]$ with $g_i\in G$. For all integers $p$ we denote by $e$ the $\k$-linear map defined for $p\le \dim(G,\k)$ by:
	\[
	\begin{array}[t]{cccc}
		e:  & \k[G]^{\otimes p} & \to & \Lambda^p(G\otimes_\Z\k [1]) \\
		& [b_{g_1}|\dots|b_{g_p}] & \mapsto & \dfrac{1}{p!}(g_1\otimes 1)\wedge\cdots\wedge (g_p\otimes 1)
	\end{array}\;,
	\]
	and which is zero if $p>\dim(G,\k)$.
	This $\k$-linear map is obviously natural with respect to $G$. Moreover $e$ is a morphism of complexes: a direct computation shows that $ed[b_{g_1}|\dots|b_{g_p}]=0$ (for this computation, note that the product in the $\k$-algebra $\k[G]$ is given by $b_gb_h=b_{g+h}$.) Another direct computation shows that $e$ is a morphism of $\k$-algebras, and since the domain and the codomain of $e$ are equal to $\k$ in degree zero, $e$ automatically preserves the augmentation, hence it is a morphism in $\mathrm{ACDGAlg}_\k$.
	
	It remains to check that $e$ is a quasi-isomorphism. Since the homology algebras of both the source and the target of $e$ are abstractly isomorphic to exterior algebras generated in degree $1$, it suffices to check that $H_1(e):H_1(\overline{B}\k[G])\to G\otimes_\Z\k$ is an isomorphism, i.e. that the sequence
	\[\k[G]\otimes \k[G]\xrightarrow[]{f} \k[G]\xrightarrow[]{e} G\otimes_\Z\k\to 0\,,\]
	where $f[b_g|b_h]=b_h-b_{gh}+b_g$, is exact at $\k[G]$ and $G\otimes_\Z\k$. The latter holds for all abelian groups $G$, by the usual computation of the degree $1$ homology of an abelian group $G$.
\end{proof}

\begin{remark}
	Let $C(R,M)$ denote the Hochschild complex of a $\k$-algebra $R$ with coefficients in a bimodule $M$. The $\k[G]$-bimodule morphism $\epsilon:\k[G]\to \k$ induces a surjective morphism of complexes $C(\k[G],\k[G])\twoheadrightarrow C(\k[G],\k)=\overline{B}(\k[G])$. The map $e$ of proposition~\ref{prop:level1formalityQ} is the passage to the quotient of the retract of the Hochschild-Kostant-Rosenberg map in characteristic zero from~\cite[Corollary~9.4.4]{we-97}. 
\end{remark}

\begin{remark}
	The isomorphism of algebras $H_*(e):H_*(\overline{B}\k[G])\to \Lambda(G\otimes_\Z\k[1])$
	must actually be an isomorphism of Hopf algebras, since there is a unique graded Hopf algebra structure on its target. However, $e$ is not a morphism of coalgebras, in general.
\end{remark}
\begin{remark}
	The isomorphism $\psi$ lifts to a quasi-isomorphism of dg-algebras $\psi\colon\Lambda(G\otimes_{\mathbb{Z}}\k[1])\to \overline{B}\k[G]$. Although this quasi-isomorphism is natural in $G$ \emph{after taking homology}, it cannot be natural in $G$. Indeed, if it was, then $\psi_{1}\colon G\otimes_{\Z}\k\to \overline{\k[G]}$ would be an injective natural transformation. Now the reduced coproduct of the Hopf algebra $\k[G]$ yields an injective natural transformation $\overline{\Delta}\colon\overline{\k[G]}\to \overline{\k[G]}^{\otimes 2}$, hence $\overline{\Delta}\circ\psi_{1}\colon G\otimes_{\Z}\k\to \overline{\k[G]}^{\otimes 2}$ would be a nonzero natural transformation. Such a nonzero natural transformation cannot exist by Pirashvili's vanishing lemma, see lemma~\ref{lm:pira} below.
\end{remark}
The next proposition proves the case $n=2$ of theorems~\ref{thm:formQ} and~\ref{thm:formsmalldim}.
\begin{proposition}
	\label{prop:level2formalityQ}
	Assume that all the positive integers less or equal to $d$ are invertible in $\k$. 
	Then the functor $\overline{B}^2\k[-]:\Ab_{\le d,\k}\to \mathrm{CDGHopf}_\k$ is formal.
\end{proposition}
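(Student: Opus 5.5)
The plan is to chain two natural zig-zags of quasi-isomorphisms of Hopf algebras, one coming from proposition~\ref{prop:level1formalityQ} and one from lemma~\ref{lm:formalitySLambda}. Write $V=G\otimes_\Z\k$. For $G$ in $\Ab_{\le d,\k}$ the module $V$ is flat over $\k$: it is $\k$-torsion free in the relevant sense, and when $\k$ is a $\Q$-algebra flatness is automatic. The exact flatness input should be checked from $\Tor_1^\Z(\k,G)=0$ by writing $G$ as a filtered colimit of finitely generated groups.

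\textbf{Step 1.} Proposition~\ref{prop:level1formalityQ} gives a natural quasi-isomorphism $e\colon \overline{B}\k[G]\to \Lambda(V[1])$ in $\mathrm{ACDGAlg}_\k$. Since $\overline{B}$ is a functor $\mathrm{ACDGAlg}_\k\to\mathrm{CDGHopf}_\k$, applying it yields a natural morphism of Hopf algebras
\[
\overline{B}(e)\colon \overline{B}^2\k[G]\to \overline{B}\Lambda(V[1]).
\]
I claim $\overline{B}(e)$ is a quasi-isomorphism. The proof of proposition~\ref{pr:zigzagforEML} already uses that $\overline{B}$ preserves quasi-isomorphisms, but there it was for Hopf algebras; here I only need the statement for augmented cdgas. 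I would justify it by the standard filtration of the bar construction by the number of bars. On $E^1$ the map induced by $e$ is $H(\overline{A}[1]^{\otimes p})\to H(\overline{\Lambda}[1]^{\otimes p})$. By Künneth this is an isomorphism, because both homologies are flat: $\Lambda(V[1])$ is flat, and $H_*(\overline{B}\k[G])\cong\Lambda(V[1])$ by $\psi$. Both complexes are bounded below in each degree, so the spectral sequence converges and $\overline{B}(e)$ is a quasi-isomorphism.

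\textbf{Step 2.} Lemma~\ref{lm:formalitySLambda} with $i=0$ gives the quasi-isomorphism of dg-Hopf algebras $i_\Gamma\colon\Gamma(V[2])\to\overline{B}\Lambda(V[1])$. It is natural in $V$, hence in $G$. Composing gives the natural zig-zag
\[
\overline{B}^2\k[G]\xrightarrow{\overline{B}(e)}\overline{B}\Lambda(V[1])\xleftarrow{i_\Gamma}\Gamma(V[2]).
\]
Both arrows are quasi-isomorphisms in $\mathrm{CDGHopf}_\k$. The right end has zero differential, and it identifies with $H_*(\overline{B}^2\k[G])$ with its induced Hopf structure, since $H_*(i_\Gamma)^{-1}\circ H_*(\overline{B}(e))$ is a Hopf isomorphism. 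It is also flat, as required in the definition of formality. Hence the functor is formal.

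The main obstacle I expect is in Step 1, specifically the input that $e$ is genuinely a map of augmented cdgas. This is needed for $\overline{B}(e)$ to be a Hopf map, and it is supplied by proposition~\ref{prop:level1formalityQ}. The other delicate point is the flatness and Künneth bookkeeping that makes $\overline{B}$ preserve this particular quasi-isomorphism without assuming $\k$ is a field. If flatness of $V$ fails in some edge case, the fallback is to reduce to finitely generated $G$ by filtered colimits, as in the proof of lemma~\ref{lm:formalitySLambda}. Notably, no divisions beyond those already used in $e$ are needed, since $\Gamma$ rather than $S$ appears at height $2$.
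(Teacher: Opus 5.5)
Your zig-zag $\overline{B}^2\k[G]\xrightarrow{\overline{B}(e)}\overline{B}\Lambda(G\otimes_\Z\k[1])\xleftarrow{i_\Gamma}\Gamma(G\otimes_\Z\k[2])$ is exactly the paper's. Your bar-length filtration plus K\"unneth argument correctly expands the paper's bare assertion that bar constructions preserve quasi-isomorphisms.

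The real soft spot is the flatness of $V=G\otimes_\Z\k$, which you leave as something that ``should be checked''. Every step rests on it: your K\"unneth step, lemma~\ref{lm:formalitySLambda}, and the flatness convention in the definition of formality for Hopf algebras. The route you suggest, deducing flatness from $\Tor_1^\Z(\k,G)=0$ alone, does not work. For $\k=\Z$ and $G=\Z/2$ one has $\Tor_1^\Z(\k,G)=0$, yet $G\otimes_\Z\k=\Z/2$ is not flat.

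What you need is that every torsion element of $G$ has order invertible in $\k$; this is the form of $\k$-torsion freeness the paper's proof actually uses. Then the torsion subgroup $T$ satisfies $T\otimes_\Z\k=0$. So $G\otimes_\Z\k\simeq (G/T)\otimes_\Z\k$ is the base change of a torsion-free, hence flat, $\Z$-module, and is therefore flat over $\k$. With that input your filtered-colimit idea also works, since each finitely generated subgroup then tensors to a free $\k$-module.

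Your fallback of reducing to finitely generated $G$ would not rescue anything, since the failure already occurs for finite $G$. With the correct reading of the hypothesis, there is simply no edge case to fall back on.
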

\begin{proof}
	If $G$ is a $\k$-torsion free abelian group, then $G\otimes_\Z\k$ is $\k$-flat. (Indeed, if $T$ denotes the torsion subgroup of $G$, then $T\otimes_\Z\k=0$ because all elements of $T$ have order invertible in $\k$, hence $G\otimes_\Z\k\simeq G/T\otimes_\Z\k$ is the $\k$-extension of a flat $\Z$-module, hence it is flat.) 
	Since bar constructions preserve quasi-isomorphisms, proposition~\ref{prop:level1formalityQ} and lemma~\ref{lm:formalitySLambda} yield a zig-zag of natural quasi-isomorphisms of Hopf algebras: 
	\[
    \begin{tikzcd}
    \overline{B}^2\k[G] \arrow{r}{\overline{B}(e)} &
    \overline{B}\Lambda(G\otimes_\Z\k[1]) &
    \Gamma(G\otimes_\Z\k[2]) \arrow[swap]{l}{i_\Gamma}.
    \end{tikzcd}
    \]
\end{proof}

The next proposition finishes the proof of theorem~\ref{thm:formQ}.

\begin{proposition}\label{prop:higherlevelformalityQ}
	Assume that $\k$ is a $\Q$-algebra. For all $n\ge 2$, the functor $\overline{B}^n\k[-]:\Ab\to \mathrm{CDGHopf}_\k$ is formal.   
\end{proposition}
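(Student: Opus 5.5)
We argue by induction on $n\ge 2$, extending the zig-zag of proposition~\ref{prop:level2formalityQ} one step at a time by applying $\overline{B}$ and then lemma~\ref{lm:formalitySLambda}. Since $\k$ is a $\Q$-algebra, $\Gamma(V[2i+2])\simeq S(V[2i+2])$ as graded Hopf algebras, naturally in the $\k$-module $V$ (the isomorphism $v_1\cdots v_p\mapsto \sum_{\sigma}v_{\sigma(1)}\otimes\cdots\otimes v_{\sigma(p)}$ onto the invariants is natural and invertible because $p!$ is invertible). Moreover $G\otimes_\Z\k$ is $\k$-flat for every abelian group $G$, since every group is $\k$-torsion free when $\k\supset\Q$ (see the proof of proposition~\ref{prop:level2formalityQ}). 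So the target homology at each stage is a free graded commutative Hopf algebra on the flat module $V=G\otimes_\Z\k$ placed in degree $n$: $S(V[n])$ if $n$ is even and $\Lambda(V[n])$ if $n$ is odd. We prove this stronger statement, with zig-zags natural in $G$.

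The base case $n=2$ is proposition~\ref{prop:level2formalityQ}, composed with the natural isomorphism $\Gamma(V[2])\simeq S(V[2])$. For the inductive step, assume we have a natural zig-zag of quasi-isomorphisms of Hopf algebras between $\overline{B}^{n}\k[G]$ and $H_n:=S(V[n])$ or $\Lambda(V[n])$. Apply $\overline{B}$. By the fact already used in the proof of proposition~\ref{pr:zigzagforEML}, it sends quasi-isomorphisms of (augmented) Hopf algebras to quasi-isomorphisms, and it is functorial, so naturality in $G$ is kept. This gives a natural zig-zag from $\overline{B}^{n+1}\k[G]$ to $\overline{B}H_n$. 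We then close the zig-zag using lemma~\ref{lm:formalitySLambda}, where the parity of $n$ decides which map is used:
\begin{itemize}
\item if $n=2i$ is even, $i_\Lambda\colon \Lambda(V[2i+1])\to \overline{B}S(V[2i])$ is a natural quasi-isomorphism of dg-Hopf algebras;
\item if $n=2i+1$ is odd, $i_\Gamma\colon\Gamma(V[2i+2])\to\overline{B}\Lambda(V[2i+1])$ is one, and we compose it with $\Gamma\simeq S$.
\end{itemize}
Both maps are natural in $V$, hence in $G$ through $V=G\otimes_\Z\k$. In each case the end of the zig-zag is the free graded commutative Hopf algebra $H_{n+1}$ on $V[n+1]$ with zero differential, so it is the homology of $\overline{B}^{n+1}\k[G]$ with the induced Hopf structure. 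This closes the induction.

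The main point to check, rather than a real obstacle, is twofold. First, $i_\Lambda$ and $i_\Gamma$ must be morphisms in $\mathrm{CDGHopf}_\k$; the paper already states they are inclusions of dg-Hopf algebras. Second, the identification $\Gamma\simeq S$ must be an isomorphism of Hopf algebras natural in $V$; this is the step where the $\Q$-algebra hypothesis is used for $n\ge 3$. Flatness of $V$ is needed for lemma~\ref{lm:formalitySLambda} and for the homology to inherit a coproduct, and it holds here as noted above.
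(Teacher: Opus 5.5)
Your proposal is correct and follows the same route as the paper. Both arguments start from the natural zig-zag of proposition~\ref{prop:level2formalityQ}, iterate $\overline{B}$ (which preserves quasi-isomorphisms), close each step with $i_\Lambda$ or $i_\Gamma$ from lemma~\ref{lm:formalitySLambda}, and use the natural Hopf isomorphism $\Gamma\simeq S$ available over a $\Q$-algebra. You just spell out the induction, the naturality of $\Gamma\simeq S$, and the flatness of $G\otimes_\Z\k$, which the paper leaves implicit.
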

\begin{proof}
	The case $n=2$ is already proved in proposition~\ref{prop:level2formalityQ}. In order to prove the case of $n>2$, one uses that for all $i$, the divided power algebra $\Gamma(G\otimes_\Z\k[2i])$ is naturally isomorphic to the symmetric algebra $S(G\otimes_\Z\k[2i])$ because $\k$ is a $\Q$-algebra. Therefore, by multiple uses of lemma~\ref{lm:formalitySLambda} and of the fact that bar constructions preserve quasi-isomorphisms, we obtain that $\overline{B}^n\k[G]$ is naturally quasi-isomorphic to $\Gamma(G\otimes_\Z\k[2i])$ if $n=2i$ and to $\Lambda(G\otimes_\Z\k[2i+1])$ if $n=2i+1$.
\end{proof}
	
\section{A non-formality result}\label{sec-nonformalityFp}

If $C$ is a cyclic group, we denote by $\langle C\rangle$ the full subcategory of the category of abelian groups $\Ab$, whose objects are all finite direct sums of copies of $C$.
The main result of this section is the following theorem. 
\begin{theorem}\label{thm:nonformality1}
	Assume that $\k$ is not a $\Q$-algebra. Let $C$ be a cyclic group such that $C\otimes_{\Z}\k\ne 0$ and let $n$ be a positive integer. 
    If $D:\langle C\rangle \to \sset_{*}$ is a diagram of EML spaces of height $n$ such that $\pi_nD(G)$ is naturally isomorphic to $G$, then $D$ is not formal in $\mathrm{DGMod}_\k$.    
\end{theorem}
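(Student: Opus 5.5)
We first reduce to a statement about the iterated bar construction. By proposition~\ref{pr:zigzagofdiagramsofEML}, $D$ is connected by a zig-zag of objectwise weak equivalences to the diagram of abelian EML spaces $G\mapsto K(G[n])$. Weak equivalences induce quasi-isomorphisms on $C_*(-;\k)$, so formality in $\mathrm{DGMod}_\k$ is unchanged along such a zig-zag. By corollary~\ref{cor:reductiontobar} it therefore suffices to show that the functor $G\mapsto \overline{B}^n\k[G]$ from $\la C\ra$ to $\mathrm{DGMod}_\k$ is not formal.

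Next we choose the test coefficients.
\begin{itemize}
\item If $C=\Z/m$, the hypothesis $C\otimes_\Z\k\neq0$ means $m$ is not invertible in $\k$, so some prime $p\mid m$ is not invertible.
\item If $C=\Z$, we use that $\k$ is not a $\Q$-algebra to find a prime $p$ that is not invertible in $\k$.
\end{itemize}
In either case we take a maximal ideal $\mathfrak m\ni p$ and set $\mathbb F=\k/\mathfrak m$. Then $\chr\mathbb F=p$ and $C\otimes\mathbb F\simeq\mathbb F$. Let $T=-\otimes_\Z\mathbb F$, an additive functor in $\mathcal F(\la C\ra,\k)$. If the functor $\overline{B}^n\k[-]$ were formal, it would be isomorphic in $D(\mathcal F(\la C\ra,\k))$ to its homology. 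The two total groups in \eqref{eq:twoext} would then be isomorphic, so it suffices to show they have different total $\mathbb F$-dimension.

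For the first group, we use that $\k[G]=\k[\Hm(C,G)]$ is the standard projective functor, so the Yoneda lemma gives $\Hm(\k[-],F)=F(C)$. Likewise $\k[G]^{\otimes N}=\k[\Hm(C^{N},G)]$ is projective, and the reduced pieces $\overline{\k[G]}^{\otimes N}$ are direct summands of it (split by the augmentation), hence also projective. Using the model $\overline{B}'$ of remark~\ref{rem:reducednormalized} iteratively, $\overline{B}^n\k[G]$ is a bounded-below complex whose terms are finite direct sums of functors $\overline{\k[G]}^{\otimes N}$, hence a complex of projectives. Its $\Ext$ into $T$ is therefore computed by the cochain complex $\Hm(\overline{B}^n\k[-],T)$. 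By Pirashvili's vanishing lemma (lemma~\ref{lm:pira}), $\Hm(\overline{\k[-]}^{\otimes N},T)=0$ for $N\ge2$, because $T$ is additive and such a tensor product of reduced functors has vanishing linear part. For $N=1$ the same group is $\ker(T(C)\to T(0))=\mathbb F$. The only summand with $N=1$ is the single copy of $\overline{\k[G]}$ sitting in degree $n$ (the word consisting of one letter at each bar level). Hence the cochain complex is $\mathbb F$ concentrated in degree $n$, and the first total group is $\mathbb F$.

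For the second group, $H_*(\overline{B}^n\k[-])$ has zero differential, so its total $\Ext$ is the direct sum over $q$ of $\Ext^*(H_q,T)$. It already contains $\Ext^*(H_n,T)$ with $H_n(\overline{B}^n\k[G])\simeq G\otimes_\Z\k$, whose $\Ext^0$ is $\Hm(-\otimes\k,-\otimes\mathbb F)\ni$ the reduction map, and this is nonzero. It therefore suffices to produce one more nonzero class. This can be either in $\Ext^{>0}(-\otimes\k,T)$ or in $\Ext^*(H_q,T)$ for some $q\neq n$.

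\textbf{Main obstacle.} The delicate step is producing that second class. My first attempt would be to resolve $-\otimes_\Z\k$ by standard projectives, starting with $\k[G\oplus G]\to\k[G]\to G\otimes\k\to0$ via $[b_g|b_h]\mapsto b_g+b_h-b_{g+h}$. I would then detect a nonzero class in low degree, either a Mac~Lane-cohomology type class in degree $2p-1$ or $2p$, or, when $C=\Z/p^r$, a nonsplit Bockstein-type extension in $\Ext^1$. I expect a Mac~Lane-type class coming from the $p$-th power/Frobenius behaviour, which is nontrivial precisely because $\chr\mathbb F=p$.

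A fallback is to use the other homology groups. For $n=1$, $H_2=\Lambda^2(G\otimes\k)$ is a nonzero quadratic functor, because $\Lambda^2(C\oplus C)\otimes\mathbb F\neq0$. I would try to show that some $\Ext^i(\Lambda^2,T)$ is nonzero by comparing with $\Ext$ groups of strict polynomial functors over $\mathbb F$. For $n\ge2$ the analogous fallback would use the divided power and Frobenius-twisted pieces of $H_*$.
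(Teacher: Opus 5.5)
\paragraph{The missing step.} Your reduction to $\overline{B}^n\k[-]$ is correct and matches the paper. So is your computation that the total $\Ext$ of $\overline{B}^n\k[-]$ into $A=-\otimes_\Z\F$ is $\F$; you use projectivity of the terms $\overline{P}^{\otimes N}$ where the paper uses an injective resolution of $A$, which is equivalent. The gap is the other half. You never prove that $\bigoplus_{i,j}\Ext^i(H_j(n,\k),A)$ has dimension $>1$, i.e.\ proposition~\ref{prop:Extcomputation2}, and this is where essentially all the work lies.

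\paragraph{Why the proposed routes do not close it.} Your ``main obstacle'' paragraph only lists hopes, and several of them cannot work.
\begin{itemize}
\item There is no Bockstein class in $\Ext^1(H_n(n,\k),A)$. Both functors are additive, and an extension of additive functors is again additive, since cross-effects are exact. Additive functors on $\langle C\rangle$ are modules over $\mathrm{End}(C)\otimes_\Z\k$, and under this equivalence $H_n(n,\k)=-\otimes_\Z\k$ is free of rank one. So this $\Ext^1$ vanishes.
\item The $\Lambda^2$ fallback for $n=1$ fails for every odd $p$. Multiplication by $2$ on $\Lambda^2$ factors through $\otimes^2$, so by lemma~\ref{lm:pira} it kills $\Ext^*(\Lambda^2(-\otimes_\Z\k),A)$. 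That group is an $\F$-vector space in which $2$ is invertible, so it is zero. Moreover, for finite $C$, universal coefficients show that $H_2(\overline{B}\k[G])=H_2(G;\k)$ has $\Tor_1^\Z(G,\k)$ as a quotient, so it is not $\Lambda^2(G\otimes_\Z\k)$ in general.
\item The Mac Lane cohomology idea can be made to work for $C=\Z$. Since $G\mapsto G$ is $\Z$-free valued, base change gives $\Ext^{2p-1}(-\otimes_\Z\k,A)\simeq\mathrm{HML}^{2p-1}(\Z,\F)\neq 0$. But this requires importing the computation of $\mathrm{HML}^*(\Z)$, and you offer nothing for finite $C$.
\end{itemize}

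\paragraph{What the paper uses instead.} What is missing is a concrete source for a second class, and the paper finds it in the other homology functors.
\begin{itemize}
\item For $C$ infinite, it uses Cartan's theorem that $U(G[n])\otimes_\Z\k$ is a natural direct summand of $H_*(\overline{B}^n\k[G])$. For $n$ even, the Verschiebung gives $\mathrm{Hom}(\Gamma^p(-)\otimes_\Z\k,A)\neq0$. For $n$ odd, the Koszul complex from $\Gamma^p$ to $\Lambda^p$, Pirashvili's lemma and dimension shifting give $\Ext^{p-1}(\Lambda^p(-)\otimes_\Z\k,A)\neq0$. Note that this is $\Lambda^p$, not $\Lambda^2$.
\item For $C=\Z/q$, it builds a nonzero natural map $H_j(n,\k)\to A$ with $j\neq n$. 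For $n=1$ it takes $j\in\{2p-1,2p\}$, using the explicit homology of $\Z/q$, universal coefficients and $\Gamma^p({}_pG)\to{}_pG\simeq G/pG$. For $n\ge 2$ it takes $j=2p+n-1$, using Cartan's transpotence, the Bockstein and stable homology.
\end{itemize}
Without an input of this kind, your argument only shows that the first total group is $\F$, which by itself yields no contradiction.
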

\begin{remark}
Theorem~\ref{thm:nonformality1} shows that theorem~\ref{thm:formsmalldim} does not hold if we drop the hypothesis on the $\k$-dimension of the abelian groups $G$: for example, if $\k=\Z[1/k!]$ and if $C$ is a cyclic group of prime cardinal $p>k$, then the category $\langle C\rangle$ contains groups of arbitrary large $\k$-dimension, and the diagram $G\mapsto K(G,n)$ is not formal in $\mathrm{DGMod}_\k$. (Compare with example \ref{ex:kfact}.)   
\end{remark}
By the rigidification property given in proposition~\ref{pr:zigzagofdiagramsofEML}, it suffices to prove theorem~\ref{thm:nonformality1} when $D$ is a diagram of abelian EML spaces. Then by corollary~\ref{cor:reductiontobar}, theorem~\ref{thm:nonformality1} is equivalent to the following statement on iterated bar constructions.

\begin{theorem}\label{thm:nonformality}
	Assume that $\k$ is not a $\Q$-algebra, and let $C$ be a cyclic group such that $C\otimes_{\Z}\k\ne 0$. Then for all positive $n$ the functor $\overline{B}^n\k[-]:\langle C\rangle\to \mathrm{DGMod}_\k$ is not formal.
\end{theorem}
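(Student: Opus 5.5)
Following the strategy announced in the introduction, I would argue by contradiction, working in the abelian category $\mathcal{F}(\la C\ra,\k)$ of functors from $\la C\ra$ to $\k$-modules. The key point is that a zig-zag of natural quasi-isomorphisms between $\overline{B}^n\k[-]$ and its homology $H_*(\overline{B}^n\k[-])$ is exactly an isomorphism in the derived category $D(\mathcal{F}(\la C\ra,\k))$. It would therefore induce an isomorphism of bigraded Ext groups against any test functor. Choose a prime $p$ which is not invertible in $\k$ and such that $C\otimes\F\simeq\F$ for some quotient field $\F$ of $\k$ of characteristic $p$. Such a $p$ exists: $C\otimes\k\ne0$ means $C$ is infinite or has order not invertible in $\k$, and $\k$ not a $\Q$-algebra provides some non-invertible prime. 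The test functor is $T=-\otimes\F$. It then suffices to show that the total rank of $\Ext^*_{\mathcal{F}}(\overline{B}^n\k[-],T)$ is $1$, while that of $\Ext^*_{\mathcal{F}}(H_*(\overline{B}^n\k[-]),T)$ is strictly greater than $1$.

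For the first group I would use the hyperext spectral sequence with respect to the bar filtration. In each degree, $\overline{B}^n\k[G]$ is a finite sum of tensor products of copies of $\overline{\k[G]}$ (the reduced form of remark~\ref{rem:reducednormalized}), tensored with $\k$ in the unit degree. The functors $\k[\Hm(C,-)]=\k[-]$ are projective by Yoneda, since $G\simeq\Hm_{\la C\ra}(C,G)$ when $C$ is cyclic. Hence $\overline{\k[-]}^{\otimes m}$ with $m\ge1$ is a direct summand of $\k[-^{\oplus m}]=\k[\Hm(C^{\oplus m},-)]$, and is therefore projective. Moreover $\Hm(\overline{\k[-]}^{\otimes m},T)$ can be computed from Pirashvili's vanishing lemma (lemma~\ref{lm:pira}): a natural transformation out of a tensor product of $m\ge2$ reduced factors into an additive functor vanishes. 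So $\overline{B}^n\k[-]$ is, termwise, a complex of projectives. Its Hom into $T$ reduces to the degree $n$ part consisting of $[\dots[b_g]\dots]$ with a single reduced factor; this is the part where the iterated bar degree is one at each level. The only other contribution is the constant part $\k$ in degree $0$, and $\Hm(\k,T)=T(0)=0$. The result is $\Hm(\overline{\k[-]},-\otimes\F)=\F$, by Yoneda together with the vanishing on constants. So the total Ext is $\F$, concentrated in one degree.

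For the second group I need enough information about $H_*(\overline{B}^n\k[G])$ to exhibit at least two nonzero Ext classes. $H_n$ is $G\otimes\k$ (Hurewicz), and $\Ext^0(G\otimes\k,T)\ni$ the reduction map is nonzero. A second class should come from the non-additive part. For example, one could use the composition with the natural injection $\Hm(G\otimes \k,T)$ on $H_n$ and a nonzero $\Ext^{j}(G\otimes\k,T)$ with $j>0$. Since $H_*$ is a direct sum over degrees, the total Ext contains $\Ext^*_{\mathcal{F}}(-\otimes\k,-\otimes\F)$, which is the Mac Lane-type cohomology. This is where $p$ not invertible enters, because then $\Ext^{>0}_{\mathcal{F}(\la C\ra,\k)}(-\otimes\k,-\otimes\F)\ne0$ (e.g.\ a class detected by the Frobenius/Bockstein, or the classical computation of $\Ext$ between additive functors over $\F_p$, degree $2p-2$ or $1$ depending on whether $C$ is finite). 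Thus the rank is at least $2$, which contradicts the derived isomorphism.

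The main obstacle is the last step. It requires a rigorous nonvanishing of a higher Ext between additive functors in $\mathcal{F}(\la C\ra,\k)$ for arbitrary non-$\Q$ $\k$ and arbitrary cyclic $C$. I would first handle the cases $\k=\F$, $C=\Z/p$ and $C=\Z$, using known computations (Franjou–Lannes–Schwartz, Pirashvili). I would then reduce the general case by base change along $\k\to\F$ and restriction along $\la C\ra\to\la C\otimes\F\ra$.
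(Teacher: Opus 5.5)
\textbf{Verdict: genuine gap.} You never prove the nonvanishing your second half depends on, and for finite $C$ your proposed reduction cannot supply it.

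\textbf{First half.} Your computation $\Ext^*(\overline{B}^n\k[-],-\otimes\F)\simeq\F$ is fine and is essentially the paper's: the $\overline{P}^{\otimes m}$ are projective, and Pirashvili's lemma kills $\Hm(\overline{P}^{\otimes m},A)$ for $m\ge2$.

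\textbf{Second half: what you need.} You want the second class to come from the single summand $H_n(n,\k)\cong-\otimes_\Z\k$. That requires $\Ext^{>0}_{\mathcal{F}(\la C\ra,\k)}(-\otimes\k,-\otimes\F)\neq0$, which you assert but do not prove.

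\textbf{$C$ infinite: this can be rescued.} The identity functor $\mathrm{Id}$ of free abelian groups is $\Z$-free valued. Tensoring a projective resolution of it with $\k$ gives
$\Ext^*_{\mathcal{F}(\la\Z\ra,\k)}(-\otimes\k,-\otimes\F)\cong\Ext^*_{\mathcal{F}(\la\Z\ra,\Z)}(\mathrm{Id},-\otimes\F)=\mathrm{HML}^*(\Z;\F)$.
This is nonzero in degrees $2p-1$ and $2p$ by the classical computation $\mathrm{HML}^{2k}(\Z,\Z)\cong\Z/k\Z$ for $k\ge1$, zero in odd degrees (Bökstedt, Franjou--Lannes--Schwartz, Franjou--Pirashvili). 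This is a genuine alternative to the paper's $\Gamma^p$/$\Lambda^p$ argument, at the price of importing MacLane cohomology.

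\textbf{$C$ finite: the classes you point to need not exist, and the reduction fails.}
\begin{enumerate}
\item $\Ext^1$ always vanishes. An extension of additive functors is additive, and additive functors on $\la C\ra$ are modules over $C\otimes_\Z\k$, in which $-\otimes\k$ is the free module of rank one.
\item For $\k=\Z$ and $C=\Z/p$, one checks that $\Ext^2_{\mathcal{F}(\la C\ra,\Z)}(-\otimes\Z,-\otimes\F_p)=0$ as well. Yet $\Ext^2_{\mathcal{F}(\la C\ra,\F_p)}(I,I)$ contains the Frobenius class $I\hookrightarrow S^p\to\Gamma^p\twoheadrightarrow I$, where $I$ is the identity of $\F_p$-vector spaces.
\item This is exactly why base change along $\k\to\F$ is not a reduction. $\Ext_{\mathcal{F}(\la C\ra,\k)}(-\otimes\k,A)$ is the hyperext of $(-\otimes\k)\otimes^{L}_\k\F$, which has extra homology (here $\Tor_1^\Z(-,\F_p)$). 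The resulting connecting map, Yoneda product with a class in $\Ext^2$, kills the Frobenius class.
\item Restriction along functors such as $\la\Z\ra\to\la\Z/p\ra$ is not injective on $\Ext$ either. The same degree-$2$ class dies there, since $\mathrm{HML}^2(\Z;\F_p)=0$.
\end{enumerate}
So in the finite case a positive-degree class must come from much deeper input. For $\k=\Z$, $C=\Z/p$, the first such class sits in degree $2p-1$, and seeing it requires the multiplicative structure of $\Ext^*_{\mathcal{F}(\la\Z/p\ra,\F_p)}(I,I)$. None of this is in your sketch.

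\textbf{What the paper does instead.} It avoids higher $\Ext$ between additive functors entirely. The second class comes from a \emph{different} homology functor $H_j(n,\k)$ with $j\neq n$, using explicit knowledge of the homology of Eilenberg--MacLane spaces.
\begin{itemize}
\item For $C$ infinite, it uses Cartan's natural summand $\Gamma^p(G)\otimes\k$ or $\Lambda^p(G)\otimes\k$ of $H_*(\overline{B}^n\k[G])$. This gives a nonzero $\Ext^0$ via the Verschiebung, respectively a nonzero $\Ext^{p-1}$ via the Koszul complex and Pirashvili vanishing.
\item For $C=\Z/q$, it builds a nonzero natural map $H_{2p}(1,\k)\to A$ or $H_{2p-1}(1,\k)\to A$ when $n=1$, and $H_{2p+n-1}(n,\k)\to A$ when $n\ge2$. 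The ingredients are Cartan's operations (transpotence, $\gamma_p$, suspension), Bocksteins and universal coefficient sequences.
\end{itemize}
This input on higher homology functors is what your proposal is missing for finite $C$.
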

	
In order to prove theorem~\ref{thm:nonformality}, we will find an obstruction to formality  relying on homological algebra computations in functor categories. Thus we start by recalling some basic facts regarding functor categories.

\subsection{Recollections of functor categories}\label{subsec:recollectF}
If $\Cc$ is an essentially small category, we denote by $\mathcal{F}(\Cc,\k)$ the category whose objects are the functors from $\Cc$ to $\k$-modules, and whose morphisms are the natural transformations. 
We take~\cite{mi-72} as a reference for the basic structural facts on this category.
The category $\mathcal{F}(\Cc,\k)$ is abelian and bicomplete. Kernels, cokernels, direct sums or products are computed objectwise, for example the kernel of a natural transformation $f\colon F\to G$ is such that $(\mathrm{Ker} f)(C)=\mathrm{Ker}(f_C\colon F(C)\to G(C)$. There is a tensor product of functors defined from the tensor product of $\k$-modules by $(F\otimes G)(C)=F(C)\otimes_\k G(C)$, and which is a monoidal structure on $\mathcal{F}(\Cc,\k)$. The category $\mathcal{F}(\Cc,\k)$ also has enough injectives and projectives. The functors $P^X\colon C\mapsto \k[\Hm_\Cc(X,C)]$, where $X$ belongs to $\Cc$, yield a projective generator, and the Yoneda lemma yields a natural isomorphism:
\[
	\Hm_{\mathcal{F}(\Cc,\k)}(P^X,F)\simeq F(X)\;.
\]

When $\Cc$ is an additive category, the category $\mathcal{F}(\Cc,\k)$ has nice extra features. First of all, a functor $F$ is called \emph{reduced} if $F(0)=0$. We denote by $\mathcal{F}_{\mathrm{red}}(\Cc,\k)$ the category of reduced functors and natural transformations. Using the fact that $0$ is initial and terminal in $\Cc$, one sees that every functor $F$ has a canonical decomposition $F=\overline{F}\oplus F(0)$, where $\overline{F}$ is a reduced functor and $F(0)$ denotes the constant functor with value $F(0)$. This decomposition yields a direct sum decomposition 
\[
	\mathcal{F}(\Cc,\k)=\mathcal{F}_{\mathrm{red}}(\Cc,\k)\oplus \mathrm{Mod}_\k\;.
\] 
In particular, for all functors $F$ and $G$ we have
\[
	\Ext^*_{\mathcal{F}(\Cc,\k)}(F,G)=\Ext^*_{\mathcal{F}(\Cc,\k)}(\overline{F},\overline{G})\oplus \Ext^*_\k(F(0),G(0))\;.
\]
Another fundamental result that we will need is Pirashvili's vanishing lemma (see e.g.~\cite[\S2.4]{pi-03} where the proof given works without change when $\Cc$ is an arbitrary essentially small category and $\k$ is an arbitrary commutative ring).
\begin{lemma}[{Pirashvili~\cite{pi-03}}]\label{lm:pira}
	Let $A$, $F_1$ and $F_2$ be three objects of $\mathcal{F}(\Cc,\k)$. Assume that $A$ is additive, and that $F_1$ and $F_2$ are reduced. Then
	$\Ext^*_{\mathcal{F}(\Cc,\k)}(A,F_1\otimes F_2)=0=\Ext^*_{\mathcal{F}(\Cc,\k)}(F_1\otimes F_2,A)$.
\end{lemma}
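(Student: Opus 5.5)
The plan is to prove the lemma with the classical sum–diagonal adjunction, transporting both Ext groups to the functor category $\mathcal{F}(\Cc\times\Cc,\k)$. I assume here that $\Cc$ is additive, as in the setting where the lemma is stated. Additivity of $A$ is used only through the natural isomorphism $A(X\oplus Y)\simeq A(X)\oplus A(Y)$.

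\emph{Step 1: adjunctions.} Let $\delta\colon\Cc\to\Cc\times\Cc$ be the diagonal and $\oplus\colon\Cc\times\Cc\to\Cc$ the direct sum. Since $X\oplus Y$ is both a product and a coproduct, $\oplus$ is both left and right adjoint to $\delta$. Precomposition then gives exact functors $\delta^*\colon\mathcal{F}(\Cc\times\Cc,\k)\to\mathcal{F}(\Cc,\k)$ and $\oplus^*$, and $\oplus^*$ is both left and right adjoint to $\delta^*$.

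An exact functor with an exact left (resp.\ right) adjoint preserves injectives (resp.\ projectives). Hence the adjunction isomorphisms extend to Ext groups:
\[
\Ext^*_{\mathcal{F}(\Cc,\k)}(\delta^*B,A)\simeq\Ext^*_{\mathcal{F}(\Cc\times\Cc,\k)}(B,A\circ\oplus),
\]
\[
\Ext^*_{\mathcal{F}(\Cc,\k)}(A,\delta^*B)\simeq\Ext^*_{\mathcal{F}(\Cc\times\Cc,\k)}(A\circ\oplus,B).
\]
We apply these to the external tensor product $B=F_1\boxtimes F_2$, given by $(X,Y)\mapsto F_1(X)\otimes F_2(Y)$, so that $\delta^*B=F_1\otimes F_2$. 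Since $A$ is additive, $A\circ\oplus\simeq p_1^*A\oplus p_2^*A$, where $p_1,p_2$ are the two projections $\Cc\times\Cc\to\Cc$.

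\emph{Step 2: vanishing.} By symmetry it suffices to show that
\[
\Ext^*(F_1\boxtimes F_2,\,p_1^*A)=0=\Ext^*(p_1^*A,\,F_1\boxtimes F_2).
\]
Both adjoints of $p_1^*$ are Kan extensions along $p_1$, computed fiberwise as a colimit or a limit over the second variable $Y$. Since $0$ is both terminal and initial in $\Cc$, both of them are evaluation at $Y=0$, that is, $G\mapsto G(-,0)$. This functor is exact.

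So $p_1^*$ is exact with exact adjoints on both sides, and as in Step~1 we obtain
\[
\Ext^*(F_1\boxtimes F_2,p_1^*A)\simeq\Ext^*_{\mathcal{F}(\Cc,\k)}\bigl(F_1(-)\otimes F_2(0),A\bigr)=0,
\]
because $F_2(0)=0$. The other direction is identical.

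\emph{Main obstacle.} The only delicate point is the passage from adjunctions to Ext isomorphisms. This needs all the adjoints involved to be exact, which is why I use that $\oplus$ is two-sided adjoint to $\delta$ and that $0$ is a zero object. Otherwise one would have to work with derived Kan extensions. Nothing in the argument uses a hypothesis on $\k$ beyond commutativity.
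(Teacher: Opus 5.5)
Your proof is correct and is the standard sum--diagonal adjunction argument for Pirashvili's lemma. The paper gives no argument of its own: it cites Pirashvili's proof and asserts that it works for arbitrary $\k$ and any essentially small (additive) $\Cc$. Your version confirms that generality, because every adjoint you use ($\delta^*$, $\oplus^*$, and evaluation at $0$) is exact, so no Künneth or finiteness hypothesis is needed.
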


\subsection{Proof of theorem~\ref{thm:nonformality}}
We fix a commutative ring $\k$ which is not a $\Q$-algebra, and a cyclic group $C$ such that $C\otimes_{\mathbb{Z}}\k\ne 0$. 
\begin{lemma}\label{lm:field}
	The ring $\k$ has a quotient field $\F$ of characteristic $p>0$ such that $C\otimes_{\Z}\F\simeq \F$.
\end{lemma}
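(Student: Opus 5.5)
We must show that $\k$ has a quotient field $\F$ of characteristic $p>0$ with $C\otimes_\Z\F\simeq\F$. We split according to whether $C$ is infinite or finite.

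\emph{Case $C\simeq\Z$.} Here $C\otimes_\Z\F\simeq\F$ holds for every field $\F$, so it suffices to find some prime $p$ with $p$ not invertible in $\k$. Such a $p$ exists because $\k$ is not a $\Q$-algebra: otherwise every prime, hence every nonzero integer, would be invertible in $\k$. For this $p$, the ideal $p\k$ is proper, so it is contained in a maximal ideal $\mathfrak m$. The field $\F=\k/\mathfrak m$ is then a quotient field of $\k$ with $p=0$ in $\F$, so $\chr\F=p$.

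\emph{Case $C\simeq\Z/m$ with $m\ge 2$.} The hypothesis gives $C\otimes_\Z\k\simeq\k/m\k\neq 0$, so $m\k$ is a proper ideal. Choose a maximal ideal $\mathfrak m\supseteq m\k$ and set $\F=\k/\mathfrak m$. Then $m=0$ in $\F$, so $\chr\F=p$ for some prime $p$ dividing $m$. This also confirms that $p$ is not invertible in $\k$, as it must be. Finally, $C\otimes_\Z\F\simeq\F/m\F=\F$, since $m\F=0$.

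The only substantive points are the existence of maximal ideals containing a proper ideal (Zorn's lemma) and the elementary observation that a ring in which every prime is invertible is a $\Q$-algebra. No step is a real obstacle; the main care needed is to handle the infinite cyclic case separately, since there the hypothesis $C\otimes_\Z\k\ne0$ carries no information and we must use instead that $\k$ is not a $\Q$-algebra.
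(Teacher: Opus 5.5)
Your proof is correct and follows essentially the same route as the paper: in the infinite case you use that $\k$ is not a $\Q$-algebra to find a non-invertible prime $p$, in the finite case you use $C\otimes_\Z\k\ne 0$, and in both cases you take $\F$ to be a quotient of $\k$ by a maximal ideal. The only cosmetic difference is that in the finite case you take a maximal ideal containing $m\k$ directly, rather than first choosing a prime $p\mid q$ that is not invertible in $\k$ and then passing to a maximal ideal containing $p\k$.
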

\begin{proof}
	If $C$ is finite of cardinal $q$, then $q$ is not invertible in $\k$ since $C\otimes_{\Z}\k\ne 0$. Hence there is a prime $p$ dividing this cardinal which is not invertible in $\k$. If $C$ is infinite, the fact that $\k$ is not a $\Q$-algebra guarantees that there is a prime $p$ which is not invertible in $\k$. In both cases, one may take $\F$ to be the quotient of $\k/p\k$ by some maximal ideal. 
\end{proof}

We fix a field $\F$ of characteristic $p$ as in lemma~\ref{lm:field}. 
For instance, if $\k=\Z$ or $\k=\F_{p}[x]$ then $\F=\F_{p}$.
We introduce the following objects of $\mathcal{F}(\langle C\rangle,\k)$. 
\begin{itemize}
	\item We denote by $A$ the nonzero additive functor $A(G):=G\otimes_{\Z}\F$.
	\item We denote by $P$ the functor $P(G):=\k[G]$, and by $\overline{P}$ its reduced part. Thus $P(G)$ is the $\k$-algebra of the group $G$, and $\overline{P}(G)$ is its augmentation ideal. Note that $P$ (and hence its summand $\overline{P}$) is projective since $P\simeq P^{C}=\k[\Hm_{\Z}(C,-)]$. 
	\item We denote by $H_i(n,\k)$ the functor $G\mapsto H_i(\overline{B}^n\k[G])$.
\end{itemize}
Our proof of theorem~\ref{thm:nonformality} will rely on two preliminary $\Ext$-computations. The first one is a fairly easy consequence of the basic facts recalled in section~\ref{subsec:recollectF}.
\begin{proposition}\label{prop:Extcomputation1}
	Let $i$ and $k$ be two nonnegative integers. We have 
	\[
		\Ext^i_{\mathcal{F}(\langle C\rangle,\k)}(\overline{P}^{\otimes k},A)=
		\begin{cases}
			\F & \text{if $k=1$ and $i=0$,}\\
			0  & \text{otherwise.}
		\end{cases}
	\]
\end{proposition}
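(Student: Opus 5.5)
We split into the three cases $k=0$, $k=1$, $k\ge 2$.

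For $k=0$, $\overline{P}^{\otimes 0}$ is the constant functor $\k$, which lies in the summand $\mathrm{Mod}_\k$ of the decomposition $\mathcal{F}(\langle C\rangle,\k)=\mathcal{F}_{\mathrm{red}}(\langle C\rangle,\k)\oplus\mathrm{Mod}_\k$. The functor $A$ is additive, hence reduced, so it lies in the other summand. The displayed formula for $\Ext$ then gives
\[\Ext^*(\k,A)=\Ext^*_\k(\k,A(0))=\Ext^*_\k(\k,0)=0.\]

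For $k\ge 2$, write $\overline{P}^{\otimes k}=\overline{P}\otimes\overline{P}^{\otimes (k-1)}$, a tensor product of two reduced functors. Indeed $\overline{P}$ is reduced, and so is $\overline{P}^{\otimes(k-1)}$ since $\overline{P}(0)=0$. As $A$ is additive, Pirashvili's lemma~\ref{lm:pira} gives
\[\Ext^*_{\mathcal{F}(\langle C\rangle,\k)}(\overline{P}^{\otimes k},A)=0.\]

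For $k=1$, $\overline{P}$ is a direct summand of the projective $P\simeq P^C$, hence projective, and so $\Ext^i(\overline{P},A)=0$ for $i>0$. In degree $0$, by the decomposition
\[\Hm(P,A)=\Hm(\overline{P},A)\oplus\Hm_\k(\k,A(0))=\Hm(\overline{P},A),\]
since $A(0)=0$. The Yoneda lemma then gives
\[\Hm(P^C,A)\simeq A(C)=C\otimes_\Z\F\simeq\F\]
by the choice of $\F$ in lemma~\ref{lm:field}. Here one should make sure the isomorphism $P\simeq P^C$ is the right one: $\k[G]\simeq\k[\Hm_\Z(C,G)]$ holds because every object $G$ is a direct sum of copies of the cyclic group $C$, so that $\Hm(C,G)\cong G$ naturally, via evaluation at a generator, on $\langle C\rangle$.

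The only mildly delicate points are checking that the reduced/constant splitting behaves as claimed and the naturality of $\Hm(C,G)\cong G$. Everything else follows directly from lemma~\ref{lm:pira} and Yoneda, so no step is a serious obstacle.
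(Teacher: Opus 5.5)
Your proposal is correct and follows the paper's proof: the reduced/constant splitting for $k=0$, projectivity of $\overline{P}$ plus the Yoneda lemma for $k=1$, and Pirashvili's vanishing lemma for $k\ge 2$. You also verify that evaluation at a generator gives a natural isomorphism $\Hm_\Z(C,G)\cong G$ on $\langle C\rangle$, hence $P\simeq P^C$; this is correct and is a detail the paper leaves implicit.
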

\begin{proof}
	If $k=0$ then $\overline{P}^{\otimes 0}$ is the constant functor with value $\k$ and $A$ is reduced, hence there is no nonzero $\Ext$ between them.
	If $k=1$ the $\Ext$ of positive degree vanish because $\overline{P}$ is projective, and $\Ext^0_{\mathcal{F}(\langle C\rangle,\k)}(\overline{P},A)=A(C)\simeq \F$ by the Yoneda lemma and lemma~\ref{lm:field}. Finally, the $\Ext$ vanishing for $k>1$ comes from Pirashvili's vanishing lemma~\ref{lm:pira}. 
\end{proof}

\begin{proposition}\label{prop:Extcomputation2}
	For all positive $n$, the $\F$-vector space $\bigoplus_{i,j\ge 0}\Ext^i_{\mathcal{F}(\langle C\rangle,\k)}(H_j(n,\k),A)$ has dimension greater than one.
\end{proposition}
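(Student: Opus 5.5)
The plan is to exhibit two independent nonzero classes in $\bigoplus_{i,j}\Ext^i_{\mathcal{F}(\langle C\rangle,\k)}(H_j(n,\k),A)$. The first is easy and sits in bidegree $(i,j)=(0,n)$. By the Hurewicz theorem together with proposition~\ref{pr:zigzagforEML}, there is a natural isomorphism $H_n(n,\k)\simeq H_n(K(G,n);\k)\simeq G\otimes_\Z\k$ (naturality in $G$ of this identification should be checked on the bar model, for instance on the classes $[b_{g}]$ iterated $n$ times). Hence
\[
\Hm_{\mathcal{F}(\langle C\rangle,\k)}(H_n(n,\k),A)\supseteq \Hm(G\otimes_\Z\k,\,G\otimes_\Z\F)\ni \pi,
\]
where $\pi$ is the canonical projection induced by $\k\twoheadrightarrow\F$. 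It is nonzero because $A(C)=C\otimes\F\simeq\F$ by lemma~\ref{lm:field}. So it suffices to produce one further nonzero group $\Ext^i(H_j(n,\k),A)$ with $(i,j)\neq(0,n)$.

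For the second class I would stay with the same functor $H_n(n,\k)\simeq G\otimes_\Z\k$ and show that $\Ext^{i}_{\mathcal{F}(\langle C\rangle,\k)}(G\otimes_\Z\k,A)\neq0$ for some $i>0$. The first step is to reduce to coefficients in $\F$. Since $A$ is an $\F$-linear functor, adjunction gives
\[
\Ext^*_{\mathcal{F}(\langle C\rangle,\k)}(F,A)\simeq\Ext^*_{\mathcal{F}(\langle C\rangle,\F)}(F\otimes^{\mathbf L}_\k\F,A).
\]
The derived tensor product $(G\otimes_\Z\k)\otimes^{\mathbf L}_\k\F$ has $G\otimes_\Z\F=A$ in degree $0$, possibly together with a $\Tor$-term that is again additive. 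One therefore lands in Ext groups between additive functors on $\langle C\rangle$ with values in $\F$-vector spaces. These are MacLane cohomology groups, in the form computed by Franjou--Lannes--Schwartz and Franjou--Pirashvili. For $C=\Z$ they are $HML^*(\Z,\F)$, which is nonzero in degree $2$. For $C=\Z/q$ with $p\mid q$, the group $\Ext^*(A,A)$ is nonzero in some positive degree: degree $1$ when $p^2\mid q$, coming from the nonsplit extension $G\otimes\Z/p^2$, and otherwise degree $2p$ for $\F_p$-vector spaces. If the $\Tor$-term is present, I would run the long exact sequence for the two-stage complex and check that the low-degree class survives.

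To keep the argument inside the tools of the paper as far as possible, I would first try the bar complex directly for $n=1$. The exact sequence $\overline P^{\otimes2}\to\overline P\to G\otimes\k\to0$ from the proof of proposition~\ref{prop:level1formalityQ} gives $0\to K\to\overline P\to G\otimes\k\to0$ with $\overline P$ projective. Combining this with proposition~\ref{prop:Extcomputation1} and lemma~\ref{lm:pira} yields $\Ext^1(G\otimes\k,A)=\Hm(K,A)=0$, since $K$ is a quotient of $\overline P^{\otimes2}$. It also gives $\Ext^{i+1}(G\otimes\k,A)\simeq\Ext^i(K,A)$. One then continues resolving $K$ using the higher terms of $\overline B\k[G]$, which are the tensor powers $\overline P^{\otimes k}$. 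The nonvanishing has to come from the fact that $\overline B\k[G]$ is \emph{not} exact beyond degree $1$: the higher homology functors $H_j(1,\k)$ intervene, and bookkeeping via the hypercohomology spectral sequence is exactly what the theorem exploits. For this reason I would not derive the positive-degree class this way, but import it from the MacLane cohomology computation above.

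The main obstacle is therefore this second class: proving rigorously that $\Ext^{>0}_{\mathcal{F}(\langle C\rangle,\k)}(G\otimes_\Z\k,A)\neq0$ for an arbitrary ring $\k$ and cyclic group $C$ as in lemma~\ref{lm:field}. The subtle points are the change of rings from $\k$ to $\F$ and the restriction from all abelian groups to the subcategory $\langle C\rangle$. If the citation route is too delicate, the fallback is to find instead a second natural additive quotient of some $H_j(n,\k)$ with $j\neq n$. Candidates are the classes coming from Bockstein or transpotence operations in $H_*(K(G,n);\F)$, which give a nonzero map in $\Hm(H_j(n,\k),A)$ and thus the required second dimension directly.
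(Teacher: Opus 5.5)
Your first class is fine and coincides with the paper's: $H_n(n,\k)\simeq G\otimes_\Z\k$ surjects naturally onto $A$. The gap is the second class. It rests entirely on the claim $\Ext^{>0}_{\mathcal{F}(\langle C\rangle,\k)}(G\otimes_\Z\k,A)\neq 0$, which you leave unproved, and some of the facts you quote in support are wrong.

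For $C=\Z$ the base change is harmless, since $G\otimes_\Z\k$ is objectwise $\k$-free, and the group is $HML^*(\Z,\F)$. But $HML^2(\Z,\F)=0$: the first positive degrees where it is nonzero are $2p-1$ and $2p$ (B\"okstedt; Franjou--Pirashvili). So this case can be rescued, but only with a correct citation.

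For $C=\Z/q$ the degree-one class you propose cannot exist. Your own bar-complex argument shows $\Ext^1_{\mathcal{F}(\langle C\rangle,\k)}(G\otimes_\Z\k,A)=0$. Moreover $G\otimes\Z/p^2$ is not $\F$-valued, so it defines no class in $\Ext^1_{\mathcal{F}(\langle C\rangle,\F)}(A,A)$. In fact that group injects into $\Ext^1_{\mathcal{F}(\langle C\rangle,\k)}(G\otimes_\Z\k,A)$, as the corner term $\Ext^1(H_0,A)$ of the hypercohomology spectral sequence, and therefore vanishes.

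More fundamentally, the transfer from $\Ext^*(A,A)$ to $\Ext^*(G\otimes_\Z\k,A)$ is the whole difficulty, and you defer it as ``check that the low-degree class survives''. It fails in general. Take $C=\Z/p$ and $\k=\Z$, so that $G\otimes^{\mathbf L}_\Z\F_p$ is two-stage with $H_0=H_1=A$. Because $\Ext^1_{\mathcal{F}(\langle C\rangle,\Z)}(G,A)=0$, the class $\Hm(H_1,A)\simeq\F_p$ in total degree one cannot survive. The only possibility is that $d_2\colon \Hm(H_1,A)\to\Ext^2_{\mathcal{F}(\langle C\rangle,\F_p)}(A,A)$ is injective, which kills a nonzero positive-degree class of $\Ext^*(A,A)$.

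For general $\k$ the complex $(G\otimes_\Z\k)\otimes^{\mathbf L}_\k\F$ need not even be two-stage. For $\k=\Z/p^2$ and $C=\Z/p$ it has a copy of $A$ in every nonnegative degree. So you would have to control the full spectral sequence for every $\k$, and nothing in the proposal does this.

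The paper avoids this by finding the second class on a \emph{different} homology functor $H_j(n,\k)$ with $j\neq n$:
\begin{itemize}
\item For $C=\Z$ it uses Cartan's natural summand $U(G[n])\otimes\k$. The Verschiebung gives $\Hm(\Gamma^p\otimes\k,A)\neq 0$, and for $\Lambda^p\otimes\k$ the Koszul complex together with Pirashvili's lemma gives $\Ext^{p-1}\neq0$.
\item For $C$ finite and $n=1$ it builds an explicit nonzero natural map $H_{2p-1}(1,\k)\to A$ or $H_{2p}(1,\k)\to A$, according as $\Tor_1^\Z(\Z/q,\k)$ vanishes or not.
\item For $C$ finite and $n\ge 2$ it builds a nonzero natural map $H_{2p+n-1}(n,\k)\to A$ from Cartan's operations, lifted to an integral class through the Bockstein.
\end{itemize}
Your closing fallback points in this direction but only names it. The naturality and integrality arguments that make it work are what is missing.
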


The proof of proposition~\ref{prop:Extcomputation2} is significantly more technical than that of proposition~\ref{prop:Extcomputation1}. The reason is that no workable description of the functors $H_i(n,\k)$ is known when $\k$ is an arbitrary commutative ring. We postpone the proof of proposition~\ref{prop:Extcomputation2} to sections~\ref{subsec:proof1Z},~\ref{subsec:proof1ZkZ} and~\ref{subsec:proofn}.
We first finish the proof of theorem~\ref{thm:nonformality} assuming that proposition~\ref{prop:Extcomputation2} holds.
\begin{proof}[Proof of theorem~\ref{thm:nonformality}]
	We proceed by contradiction, so we assume that $\overline{B}^n\k[-]$ is formal. If we regard $\overline{B}^n\k[-]$ as a complex in $\mathcal{F}(\langle C\rangle,\k)$, then formality means that it is connected to the complex $H(n,\k)=\bigoplus_{i\ge 0}H_i(n,\k)$ (with zero differential) by a zig-zag of quasi-isomorphisms of complexes in $\mathcal{F}(\langle C\rangle,\k)$. Thus, if $J^A$ denotes an injective resolution of $A$ in $\mathcal{F}(\langle C\rangle,\k)$, then the totalization of the bicomplex of $\k$-modules
	\[
		T:=\Hm_{\mathcal{F}(\langle C\rangle,\k)}(\overline{B}^n\k[-],J^A)
	\]
	is connected by a zig-zag of quasi-isomorphisms in $\mathrm{DGMod}_\k$ to the total complex of $\k$-modules
	\[
		T':=\Hm_{\mathcal{F}(\langle C\rangle,\k)}(H(n,\k),J^A)\;.
	\]
	
	Let us compute the homology of $T$. Let $\overline{B}^n\k[-]_s$ denote the object of degree $s$ of the complex  $\overline{B}^n\k[-]$. There is a first quadrant spectral sequence:
	\[
		E_1^{st}=\Ext^t_{\mathcal{F}(\langle C\rangle,\k)}(\overline{B}^n\k[-]_s,A)\Rightarrow H^{s+t}(T)\;.
	\]
	As a graded object, the reduced normalized bar complex of an augmented dg-algebra $A$ is isomorphic to $\bigoplus_{k\ge 0}\overline{A}[1]^{\otimes k}$ with the convention that $\overline{A}[1]^{\otimes 0}:=\k$, 
	thus we see by induction on $n$ that:
	\[ 
		\overline{B}^n\k[-]_s = 
		\begin{cases}
			\k & \text{if $s=0$,}\\
			0 & \text{if $0<s<n$,}\\
			\overline{P} & \text{if $s=n$,}
		\end{cases}
	\]
	and for $s>n$, $\overline{B}^n\k[-]_s$ is a direct sum of copies of functors $\overline{P}^{\otimes k}$ with $k>1$. Therefore by proposition~\ref{prop:Extcomputation1}, we have $E_1^{st}\simeq\F$ if $(s,t)=(n,0)$ and zero otherwise. As a consequence, the spectral sequence collapses and the total homology of $T$ is isomorphic to $\F$.
	
	Now the total homology of $T'$ is $\bigoplus_{i,j\ge 0}\Ext^i_{\mathcal{F}(\langle C\rangle,\k)}(H_j(n,\k),A)$, which is an $\F$-vector space of dimension greater than one by proposition~\ref{prop:Extcomputation2}. This contradicts the fact that $T$ and $T'$ have isomorphic homology. Hence $\overline{B}^n\k[-]$ cannot be formal.
\end{proof}

\subsection{Proof of proposition~\ref{prop:Extcomputation2} when $C$ is infinite}\label{subsec:proof1Z}
We first recall a (very) partial description of the homology of iterated bar constructions, contained in~\cite{ca-54}.
For all free abelian groups $G$, let us denote by $U(G[n])$ the free divided power ring on a copy of $G$ viewed as a $\Z$-module placed in degree $n$. Thus $U(G[n])=\Lambda(G[n])$ if $n$ is odd and $U(G[n])=\Gamma(G[n])$ if $n$ is even. 
\begin{lemma}\label{lm:clecar0}
	There is a decomposition, natural with respect to the free abelian group $G$
	\[
		H_*(\overline{B}^n\Z[G])=U(G[n])\oplus T(G,n),
	\]
	where $T(G,n)$ is a functor with values in torsion abelian groups. 
	As a consequence, for all commutative rings $\k$, the graded $\k$-module $U(G[n])\otimes_\Z\k$ is a natural direct summand of $H_*(\overline{B}^n\k[G])$.
\end{lemma}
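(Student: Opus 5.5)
The plan is to proceed by induction on $n$, constructing a natural map of Hopf algebras from $U(G[n])$ into $\overline{B}^n\Z[G]$ (or a natural zig-zag) and a natural retraction in homology modulo torsion.

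\textbf{Base case $n=1$.} For $G$ free abelian, Proposition~\ref{prop:level1formalityQ} does not apply over $\Z$. Instead we use the classical fact that $H_*(\overline{B}\Z[G])=H_*(G;\Z)\simeq\Lambda(G[1])$ for $G$ free of finite rank, naturally in $G$, and pass to arbitrary free $G$ through filtered colimits. Here $T(G,1)=0$. The naturality of this isomorphism holds at the level of homology: the Pontryagin product gives a natural algebra map $\Lambda(H_1)\to H_*$, using that odd-degree elements square to zero because the homology is torsion free. Since $H_1=G$, this natural map is an isomorphism by the Künneth formula for tori.

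\textbf{Inductive step.} Suppose the decomposition holds for $n-1$, with $U(G[n-1])$ a natural sub-Hopf algebra of $H_*(\overline{B}^{n-1}\Z[G])$ generated by $G$ in degree $n-1$.

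1. Tensor with $\Q$. Proposition~\ref{prop:higherlevelformalityQ} and Lemma~\ref{lm:formalitySLambda} give $H_*(\overline{B}^n\Q[G])\simeq U(G[n])\otimes\Q$ naturally, which is a free graded commutative algebra on $G\otimes\Q$ in degree $n$. By universal coefficients, the free part of $H_*(\overline{B}^n\Z[G])$ has the rank of $U(G[n])$ in each degree.

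2. Construct natural maps. Degree $n$ homology is $G$, naturally; this is Hurewicz in the EML picture. The inclusion $G[n]\to H_n$ extends to a natural map $U(G[n])\to H_*(\overline{B}^n\Z[G])$ of divided power algebras, using Cartan's divided power structure on the homology of iterated bar constructions of commutative algebras, which is natural. This map becomes an isomorphism after $\otimes\Q$, so it is injective, since $U(G[n])$ is torsion free.

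3. Split it. Define $T(G,n)$ as the torsion subgroup, which is natural. We then need $H_*=U\oplus T$ naturally. Since $H_*$ is finitely generated in each degree for finite-rank $G$, $H_*/T$ is free of the same rank as $U$. It remains to show that the composite $U\to H_*/T$ is an isomorphism, not just of finite index; then $U$ is a natural complement of $T$. This is the main obstacle. The plan is to use Cartan's computation in \cite{ca-54}: the integral homology of $K(\Z^r,n)$ modulo torsion is exactly the divided power algebra on the fundamental class. This can be checked on rank one and then extended via the Künneth and shuffle product compatibility, as in the proof of Lemma~\ref{lm:formalitySLambda}. For $G=\Z$ it is the classical statement that $H_*(K(\Z,n))/\mathrm{tors}$ is $\Gamma$ or $\Lambda$ on the fundamental class. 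This follows by comparing with the Serre spectral sequence, or with Cartan's explicit generators $\gamma_k$ of the divided powers.

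\textbf{Consequence.} By universal coefficients, $H_*(\overline{B}^n\Z[G])\otimes\k$ is a natural summand of $H_*(\overline{B}^n\k[G])$, since $\overline{B}^n\k[G]=\overline{B}^n\Z[G]\otimes\k$ is a complex of free modules. Composing gives $U(G[n])\otimes\k$ as a natural direct summand.
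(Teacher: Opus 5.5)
Your integral argument matches the paper's. The paper also gets the natural algebra map $U(G[n])\to H_*(\overline{B}^n\Z[G])$ from the Hurewicz isomorphism in degree $n$, and then cites Cartan (Exp.~11, Thm.~1) for injectivity and for the image being a complement of the torsion. Your rationalization argument for injectivity and your K\"unneth reduction to rank one only unpack that citation. Two small points: the induction on $n$ is never actually used, and the passage to infinite rank by filtered colimits is needed in step 3 as well, not only in the base case.

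The genuine gap is in your final paragraph. The universal coefficient sequence
\[
0\to H_j(\overline{B}^n\Z[G])\otimes_\Z\k\to H_j(\overline{B}^n\k[G])\to \Tor_1^\Z(H_{j-1}(\overline{B}^n\Z[G]),\k)\to 0
\]
is natural, but its splitting is not. So it does not make $H_*(\overline{B}^n\Z[G])\otimes_\Z\k$, or $U(G[n])\otimes_\Z\k$, a natural direct summand of $H_*(\overline{B}^n\k[G])$, and in fact no natural splitting exists in general.

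Take $n=3$, $\k=\F_2$, $G$ free of finite rank, and $V=G/2G$.
\begin{itemize}
\item By K\"unneth from $H_*(K(\Z,3);\Z)$, one gets $H_6(\overline{B}^3\Z[G])\cong\Lambda^2(G)=U_6$ and $H_5(\overline{B}^3\Z[G])\cong V$.
\item Serre's computation gives $H^6(K(G,3);\F_2)=S^2(H^3)$, so $H_6(\overline{B}^3\F_2[G])\cong\Gamma^2(V)$.
\item The degree-$3$ cohomology classes are primitive, so the Pontryagin product of $x,y\in H_3$ is the symmetric tensor $x\otimes y+y\otimes x$.
\end{itemize}
Hence $U_6\otimes\F_2=\Lambda^2(V)$ embeds as the kernel of the Verschiebung $\Gamma^2(V)\to V$. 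A natural complement would therefore give a nonzero natural map $V\to\Gamma^2(V)$. Naturality with respect to maps $\Z\to G$ forces such a map to be $\bar g\mapsto c\,\gamma_2(\bar g)$ for a constant $c\in\F_2$. Additivity then forces $c=0$, because $\gamma_2(v+w)-\gamma_2(v)-\gamma_2(w)=v\otimes w+w\otimes v\neq 0$.

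So the second assertion of the lemma is false in this generality. The paper's own proof, which also just invokes the universal coefficient theorem, has the same hole. What your argument does prove is the case $\Tor_1^\Z(H_*(\overline{B}^n\Z[G]),\k)=0$, where the universal coefficient map is a natural isomorphism. This covers, for example, $\k$ flat over $\Z$, or $n\le 2$, where $T(G,n)=0$ and the homology is free. In general you only get a natural monomorphism $U(G[n])\otimes_\Z\k\hookrightarrow H_*(\overline{B}^n\k[G])$. Any natural retraction would have to be built by other means, and as shown above it need not exist.
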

\begin{proof}
	Assume first that $\k=\Z$. Let $G$ denote a free abelian group and
	let $K(G,n)$ denote an Eilenberg-MacLane space such that $\pi_nK(G,n)\simeq G$ naturally with respect to $G$. The Hurewicz theorem and proposition~\ref{pr:zigzagforEML} yield a graded isomorphism $G[n]\simeq H_n(K(G,n);\Z)\simeq H_n(\overline{B}^n\Z[G])$. This induces an algebra morphism $U(G[n])\to H_n(\overline{B}^n\Z[G])$, natural with respect to $G$. Then~\cite[Exp.~11,~Thm.~1]{ca-54} shows that this algebra morphism is injective and in direct sum with the torsion part of the homology. 
	The statement for an arbitrary ground ring $\k$ now follows from the universal coefficient theorem.
\end{proof}
	
By lemma~\ref{lm:clecar0}, $H(\overline{B}^n\k[G])$ admits $\bigoplus_{d\ge 0}\Gamma^d(G)\otimes_\Z\k$ as a natural direct summand if $n$ is even and $\bigoplus_{d\ge 0}\Lambda^d(G)\otimes_\Z\k$  if $n$ is odd (we don't specify the homological degrees here, which are not relevant to prove proposition~\ref{prop:Extcomputation2}). Hence, proposition~\ref{prop:Extcomputation2} follows from the next lemma.
\begin{lemma}
	Let $\k$ be a commutative ring having a quotient field $\F$ of positive characteristic $p$, let $\langle C\rangle$ denote the category of free abelian groups of finite rank, and let $A:\langle C\rangle\to \mathrm{Mod}_\k$ denote the functor $G\mapsto G/pG\otimes_\Z\k$. 
	If $F\colon\langle C\rangle\to \mathrm{Mod}_\k$ is the functor $F(G)=\Lambda^1(G)\otimes_\Z\k=\Gamma^1(G)\otimes_\Z\k=G\otimes_\Z\k$ or $F(G)=\Gamma^p(G)\otimes_\Z\k$, then
	\[
		\Ext^0_{\mathcal{F}(\langle C\rangle,\k)}(F,A)\ne 0\;.
	\]
	Furthermore, if $F(G)=\Lambda^p(G)\otimes_\Z\k$ then 
	\[
		\Ext^{p-1}_{\mathcal{F}(\langle C\rangle,\k)}(F,A)\ne 0\;.
	\]
\end{lemma}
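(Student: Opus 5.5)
The plan is to treat the three functors separately. The $\Lambda^p$ case will be reduced to the $\Gamma^p$ case by dimension shifting along a Koszul complex, using Pirashvili's vanishing lemma~\ref{lm:pira} to kill the middle terms.

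\emph{The case $F(G)=G\otimes_\Z\k$.} The natural surjection $G\otimes_\Z\k\to G\otimes_\Z\F$ is induced by the quotient $\k\to\F$. For $G=\Z$ it is the surjection $\k\to\F\neq 0$. Hence it is a nonzero element of $\Hm(F,A)$.

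\emph{The case $F(G)=\Gamma^p(G)\otimes_\Z\k$.} I would construct the Verschiebung-type natural map $v\colon\Gamma^p(G)\to G/pG$. For free $G$, $\Gamma^p(G)$ is spanned by products $\gamma_{a_1}(x_1)\cdots\gamma_{a_r}(x_r)$ with $\sum a_i=p$. Define $v$ by sending $\gamma_p(x)\mapsto x \bmod p$ and every product with all $a_i<p$ to $0$.

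Well-definedness and naturality need a cleaner description. One option is to view $v$ as the $\Z$-dual of the $p$-th power map. Reduction mod $p$ gives $\Gamma^p(G)\otimes\F_p=\Gamma^p_{\F_p}(G/pG)$. The Frobenius $S^1(W)\to S^p(W)$, $w\mapsto w^p$, is additive and natural on $\F_p$-vector spaces; here $W=\Hm(G/pG,\F_p)$ is the dual, and Frobenius twists are trivial over $\F_p$. Dualizing it gives a natural linear map $\Gamma^p_{\F_p}(V)\to V$ with $\gamma_p(x)\mapsto x$.

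Composing gives
\[
\Gamma^p(G)\otimes_\Z\k\to\Gamma^p(G)\otimes_\Z\F\to G\otimes_\Z\F .
\]
This composite is nonzero, already at $G=\Z$, since $\gamma_p(1)\mapsto 1$. So $\Ext^0(F,A)\neq 0$.

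\emph{The case $F(G)=\Lambda^p(G)\otimes_\Z\k$.} I would use the Koszul complex mixing divided and exterior powers:
\[
0\to\Gamma^p(G)\to\Gamma^{p-1}(G)\otimes\Lambda^1(G)\to\cdots\to\Gamma^1(G)\otimes\Lambda^{p-1}(G)\to\Lambda^p(G)\to 0 .
\]
It is natural in $G$ and exact for free $G$ of finite rank, since it is the $\Z$-dual of the usual exact Koszul complex $S^{p-i}\otimes\Lambda^i$ on the dual lattice, for $p\ge1$. Its terms are free abelian groups, so it is split exact as a complex of abelian groups and remains exact after applying $-\otimes_\Z\k$.

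The middle terms $K_i=(\Gamma^{p-i}\otimes\Lambda^i)\otimes\k$ with $0<i<p$ are tensor products of two reduced functors. Since $A$ is additive, lemma~\ref{lm:pira} gives $\Ext^*(K_i,A)=0$.

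Splitting the sequence into short exact sequences $0\to Z_{i}\to K_i\to Z_{i+1}\to0$ and using the long exact $\Ext(-,A)$ sequences, each step shifts degree by one. The key step is $\Ext^j(Z_{i+1},A)\cong\Ext^{j-1}(Z_i,A)$, with the right-hand end $Z_p=\Lambda^p\otimes\k$ and the left-hand end $Z_1=\Gamma^p\otimes\k$. After $p-1$ steps this gives
\[
\Ext^{p-1}(\Lambda^p\otimes\k,A)\cong\Ext^0(\Gamma^p\otimes\k,A)\neq0
\]
by the previous case.

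The main obstacle I expect is making the Verschiebung map rigorous and natural on the category of free abelian groups of finite rank; I would handle it through the duality argument above. The exactness of the $\Gamma\otimes\Lambda$ Koszul complex over $\Z$ is standard and can be obtained by dualizing.
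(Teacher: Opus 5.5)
Your proposal is correct and follows essentially the same route as the paper. The paper uses the quotient $\k\to\F$ for $G\otimes_\Z\k$ and the Verschiebung $\Gamma^p(G)\otimes\F_p\to G/pG$ for $\Gamma^p$. For $\Lambda^p$ it dimension-shifts along the exact Koszul complex $\Gamma^{p-i}\otimes\Lambda^i$, killing the middle terms with Pirashvili's vanishing lemma. Your duality constructions of the Verschiebung (dual to Frobenius) and of the Koszul complex's exactness (dual to the exact $\Lambda\otimes S$ Koszul complex) simply supply details the paper takes from the literature.
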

\begin{proof}
	Assume first that $F(G)=G\otimes_Z\k$. Then tensoring the quotient map $\k\to\F$ by $G$ gives a non-zero element in $\Ext^0_{\mathcal{F}(\langle C\rangle,\k)}(F,A)$. 
	Assume now that $F(G)=\Gamma^p(G)\otimes_\Z\k$. Then the quotient map $\k\to\F$ together with the Verschiebung map $v\colon\Gamma^p(G)\otimes_\Z\F_p=\Gamma_{\F_p}(G/pG)\to G/pG$ induce a surjective natural transformation
	\[
		F(G)=\Gamma^p(G)\otimes_\Z\k\to 	\Gamma^p(G)\otimes_\Z\F=\Gamma^p(G)\otimes_\Z\F_p\otimes_{\F_p}\F\xrightarrow[]{v\otimes\F} G/pG\otimes_{\F_p}\F=A(G),
	\]
	hence a nonzero element in $\Ext^0_{\mathcal{F}(\langle C\rangle,\k)}(F,A)$. Finally, assume that $F(G)=\Lambda^p(G)\otimes_\Z\k$. 
	Let $K^i(G)$ denote the tensor product $(\Gamma^{p-i}(G)\otimes_\Z\k)\otimes_\k (\Lambda^i(G)\otimes_\Z\k)$, in particular $K^p(G)\otimes_\Z\k=F(G)$ and $K^0(G)=\Gamma^p(G)\otimes_\Z\k$.
	There is an exact Koszul complex, see e.g.~\cite[sec.~6.2.1]{br-mi-to-16}:
	\[
		0\to K^0(G)\to K^1(G)\to \cdots\to K^p(G)\to 0\;.
	\] 
	Each $K^i(G)$ is a tensor product of two reduced functors of $G$ if $0<i<p$, hence by Pirashvili's vanishing lemma the functors $K^i$ are acyclic for $\Ext^0_{\mathcal{F}(\langle C\rangle,\k)}(-,A)$. Moreover, we have already shown above that $\Ext^0_{\mathcal{F}(\langle C\rangle,\k)}(K^0,A)\ne 0$. Thus dimension shifting (see e.g.~\cite[Exercise~2.4.3,~p.47]{we-97} or~\cite[III~Section~7]{br-94}) gives an isomorphism
	\[
		\Ext^{p-1}_{\mathcal{F}(\langle 	C\rangle,\k)}(F,A)=\Ext^{p-1}_{\mathcal{F}(\langle C\rangle,\k)}(K^p,A)\simeq \Ext^0_{\mathcal{F}(\langle C\rangle,\k)}(K^0,A)\ne 0\;.
	\]
\end{proof}

\subsection{Proof of proposition~\ref{prop:Extcomputation2} when $n=1$ and $C$ is finite}\label{subsec:proof1ZkZ}
Let us assume that $C=\Z/q\Z$. 
\begin{lemma}\label{lm:tenstor}
	Let $A$ be an abelian group and let $_qA$ denote its subgroup of $q$-torsion.
	For all free $\Z/q\Z$-modules $G$, there is an isomorphism $\Tor^{\Z}_1(G,A)\simeq G\otimes_\Z({}_qA)$ natural with respect to $G$.
\end{lemma}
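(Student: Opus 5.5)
The plan is to compute $\Tor^{\Z}_1(G,A)$ using the standard free resolution of the $\Z$-module $G$ and to check that every identification involved is natural in $G$. Because $G$ is a free $\Z/q\Z$-module, it is in particular a $\Z/q\Z$-module, so the multiplication-by-$q$ sequence gives a resolution of $G$ that depends functorially on $G$. More precisely, for any abelian group $M$ with $qM=0$, and especially for $M=G$ free over $\Z/q\Z$, I would use the short exact sequence
\[
0\to G\otimes_\Z \Z \xrightarrow{\;q\;} G\otimes_\Z\Z \to G\otimes_\Z \Z/q\Z\to 0 .
\]
This sequence is not the right object to start from, however, because $G$ itself is not free over $\Z$. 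The cleaner route is to use the symmetry of $\Tor$ and resolve $\Z/q\Z$ instead.

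\textbf{Step 1.} Since $G$ is a free $\Z/q\Z$-module, we have $G\simeq G\otimes_{\Z/q\Z}\Z/q\Z$, and $\Z/q\Z$ has the free $\Z$-resolution $0\to\Z\xrightarrow{q}\Z\to\Z/q\Z\to 0$. I would first compute
\[
\Tor_1^\Z(\Z/q\Z,A)=\ker(q\colon A\to A)={}_qA .
\]

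\textbf{Step 2.} Next I would write $\Tor_1^\Z(G,A)$ using the fact that $G$ is a direct sum of copies of $\Z/q\Z$. To keep everything natural in $G$, the resolution should be taken in the form
\[
0\to G\otimes_{\Z/q\Z}(\Z\xrightarrow{q}\Z)\ ?
\]
That expression does not make sense as it stands, so instead I would take a genuinely functorial resolution. Let $\tilde G$ be the free $\Z$-module $\Z\otimes$(basis), or more functorially $\tilde G=\Z[G]$ modulo the relations needed. This is awkward.

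The simplest natural approach avoids resolving $G$ altogether. For any $\Z/q\Z$-module $G$, one has the base-change isomorphism
\[
\Tor^\Z_1(G,A)\simeq \Tor^\Z_1\bigl(G\otimes_{\Z/q\Z}\Z/q\Z,A\bigr)\simeq G\otimes_{\Z/q\Z}\Tor_1^\Z(\Z/q\Z,A),
\]
which holds because $\Tor^\Z_*(-,A)$ on $\Z/q\Z$-modules is computed by the complex $G\otimes_{\Z/q\Z}\bigl(\Z/q\Z\otimes^{\mathbf L}_\Z A\bigr)$. Concretely, this complex is $G\otimes_{\Z/q\Z}(A\xrightarrow{q}A)$, where $\Z/q\Z\otimes^{\mathbf L}_\Z A$ is represented by $A\xrightarrow{q}A$, or more precisely by $\Z/q\Z\otimes_\Z(\Z\xrightarrow{q}\Z)\otimes A$. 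This step uses flatness of $G$ over $\Z/q\Z$.

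The key point, then, is the natural isomorphism of complexes
\[
G\otimes_\Z(\Z\xrightarrow{q}\Z)\otimes_\Z A\;\simeq\;G\otimes_{\Z/q\Z}\bigl(\Z/q\Z\otimes_\Z(\Z\xrightarrow{q}\Z)\otimes_\Z A\bigr),
\]
which is natural in $G$. Taking $H_1$ and using flatness of $G$ over $\Z/q\Z$ (exactness of $G\otimes_{\Z/q\Z}-$) gives
\[
G\otimes_{\Z/q\Z}{}_qA=G\otimes_\Z{}_qA .
\]
For this last identification to apply, the left-hand complex must compute $\Tor^\Z_*(G,A)$. I would justify that using a change-of-rings spectral sequence $\Tor^{\Z/q\Z}_s(G,\Tor^\Z_t(\Z/q\Z,A))\Rightarrow\Tor^\Z_{s+t}(G,A)$, which is natural in $G$. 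Because $G$ is flat over $\Z/q\Z$, this spectral sequence degenerates to edge isomorphisms $G\otimes_{\Z/q\Z}\Tor_t^\Z(\Z/q\Z,A)\simeq\Tor^\Z_t(G,A)$. For $t=1$ this yields the claim.

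\textbf{Main obstacle.} The delicate point is naturality. A direct-sum decomposition of $G$ is not natural, which is why I would rely on the change-of-rings spectral sequence, or equivalently on the associativity of derived tensor products, whose edge maps are natural in $G$.
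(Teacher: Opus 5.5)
Your closing argument is correct. The base-change spectral sequence $\Tor^{\Z/q\Z}_s(G,\Tor^\Z_t(\Z/q\Z,A))\Rightarrow\Tor^\Z_{s+t}(G,A)$ is natural in the $\Z/q\Z$-module $G$ and is concentrated in the column $s=0$ when $G$ is flat over $\Z/q\Z$. Its edge isomorphism in degree $t=1$, combined with $\Tor^\Z_1(\Z/q\Z,A)={}_qA$ and $G\otimes_{\Z/q\Z}{}_qA=G\otimes_\Z{}_qA$, proves the lemma.

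However, the explicit complexes you write before it are wrong and should be deleted:
\begin{enumerate}
\item $A\xrightarrow{q}A$ is not a complex of $\Z/q\Z$-modules unless $qA=0$, so $G\otimes_{\Z/q\Z}(A\xrightarrow{q}A)$ is meaningless.
\item $\Z/q\Z\otimes_\Z(\Z\xrightarrow{q}\Z)\otimes_\Z A$ is $A/qA\xrightarrow{0}A/qA$. Its $H_1$ is $A/qA$, not ${}_qA$, so it does not represent $\Z/q\Z\otimes^{\mathbf L}_\Z A$.
\item Your ``key'' complex $G\otimes_\Z(\Z\xrightarrow{q}\Z)\otimes_\Z A$ is $G\otimes_\Z A\xrightarrow{0}G\otimes_\Z A$, whose $H_1$ is $G\otimes_\Z A$. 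For $q>1$, $G=\Z/q\Z$, $A=\Z$ this is $\Z/q\Z$, while $\Tor^\Z_1(\Z/q\Z,\Z)=0$. So this complex does not compute $\Tor^\Z_*(G,A)$, and no spectral sequence can justify that it does.
\end{enumerate}

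The error is that you resolved the wrong variable. Fix instead a free resolution $F_\bullet\to A$ over $\Z$. Then $G\otimes_\Z F_\bullet\cong G\otimes_{\Z/q\Z}(F_\bullet/qF_\bullet)$ naturally in $G$, and the left side computes $\Tor^\Z_*(G,A)$. Exactness of $G\otimes_{\Z/q\Z}-$ gives $\Tor^\Z_1(G,A)\cong G\otimes_{\Z/q\Z}H_1(F_\bullet/qF_\bullet)=G\otimes_{\Z/q\Z}{}_qA$. This is the collapse of your spectral sequence made elementary, and naturality is evident because $F_\bullet$ does not depend on $G$.

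The paper argues differently. Both $\Tor^\Z_1(-,A)$ and $-\otimes_\Z{}_qA$ are functors on free $\Z/q\Z$-modules preserving arbitrary direct sums. By an Eilenberg--Watts type result, such a functor is determined by its value on $\Z/q\Z$, viewed as a module over $\mathrm{End}(\Z/q\Z)=\Z/q\Z$, and both functors give ${}_qA$ there.

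That route needs no resolution or spectral sequence, at the cost of citing the classification. It settles the naturality issue you worried about in one stroke: it is exactly what turns the non-natural splitting of $G$ into copies of $\Z/q\Z$ into a natural isomorphism. Your route instead produces a canonical comparison map, treats all $\Tor_t$ at once, and applies to every flat (not only free) $\Z/q\Z$-module $G$.
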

\begin{proof}
	The result follows from the fact that every functor $F$ from free $\Z/q\Z$-modules to $\Ab$ preserving all direct sums is completely determined by the abelian group $F(\Z/q\Z)$, see e.g.~\cite{ei-60}.    
\end{proof}
The next lemma  relies on the known explicit functorial description of the homology of $H_*(\overline{B}\k[G])$ when $\k$ is a field.
\begin{lemma}\label{lm:pfn1Fp}
	Let $p$ be a prime dividing $q$. For all free $\Z/q\Z$-modules $G$, there is a non-zero morphism $H_{2p}(\overline{B}\F_p[G])\to G/pG$, natural with respect to $G$.
\end{lemma}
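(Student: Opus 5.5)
The plan is to build the natural morphism as the dual of a natural cohomology operation on group cohomology.

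\emph{Identification with group homology.} For an abelian group $G$, the reduced normalized bar construction $\overline{B}\F_p[G]$ is the normalized bar complex computing $\Tor^{\F_p[G]}_*(\F_p,\F_p)$. Hence there is an isomorphism $H_*(\overline{B}\F_p[G])\simeq H_*(G;\F_p)$, natural in $G$. Since $\F_p$ is a field and $G$ is finite, the universal coefficient theorem gives a natural isomorphism $H_m(G;\F_p)\simeq \Hm_{\F_p}(H^m(G;\F_p),\F_p)$. We also have natural isomorphisms
\[
G/pG\simeq \Hm_{\F_p}(\Hm_\Z(G,\F_p),\F_p)\quad\text{and}\quad \Hm_\Z(G,\F_p)=H^1(G;\F_p),
\]
valid because $G/pG$ is finite dimensional. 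It therefore suffices to construct a nonzero $\F_p$-linear map $\theta_G\colon \Hm_\Z(G,\F_p)\to H^{2p}(G;\F_p)$ that is natural in $G$ (contravariantly), and then take its $\F_p$-dual.

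\emph{Construction of $\theta$.} Let $\delta\colon H^1(G;\Z/q)\to H^2(G;\Z)$ be the connecting morphism of the short exact sequence $0\to\Z\xrightarrow{q}\Z\to\Z/q\to 0$, and let $\rho\colon H^2(G;\Z)\to H^2(G;\F_p)$ be reduction mod $p$; this uses $p\mid q$. Both maps are natural in $G$. Consider
\[
\Hm_\Z(G,\Z/q)=H^1(G;\Z/q)\longrightarrow H^{2p}(G;\F_p),\qquad u\mapsto (\rho\delta u)^p .
\]
This map is additive: cup products of even-degree classes commute, and the $p$-th power map is additive in characteristic $p$. It also vanishes on $p\cdot\Hm_\Z(G,\Z/q)$, because $\rho\delta(pu)=p\,\rho\delta(u)=0$. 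Since $G$ is a free $\Z/q$-module, the natural map $\Hm_\Z(G,\Z/q)/p\to\Hm_\Z(G,\F_p)$ is an isomorphism; this can be checked on $G=\Z/q$ and extended by additivity, in the spirit of lemma~\ref{lm:tenstor}. So the map factors through a natural $\F_p$-linear map $\theta_G\colon\Hm_\Z(G,\F_p)\to H^{2p}(G;\F_p)$.

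\emph{Nonvanishing.} It is enough to test $G=\Z/q$ and $u=\mathrm{id}$. Here $\delta(\mathrm{id})$ generates $H^2(\Z/q;\Z)\simeq\Z/q$, which is the periodicity class, and its reduction $y=\rho\delta(\mathrm{id})$ generates $H^2(\Z/q;\F_p)\simeq\F_p$. Cup product with $y$ induces the periodicity isomorphisms $H^{k}(\Z/q;\F_p)\simeq H^{k+2}(\Z/q;\F_p)$. In particular $y^p\neq0$ in $H^{2p}(\Z/q;\F_p)\simeq\F_p$.

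The main point to check is this last step: that $y$ is not nilpotent. This holds even in the exceptional case $p=2$, $q\equiv 2 \bmod 4$, where the cohomology ring is $\F_2[x]$ with $y=x^2$. It follows from the general fact that cup product with a generator of $H^2(\Z/q;\Z)$ is the periodicity isomorphism in Tate cohomology. Dualizing $\theta_G$ then yields the required nonzero natural morphism $H_{2p}(\overline{B}\F_p[G])\to G/pG$.
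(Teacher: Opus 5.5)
Your proof is correct, but it takes a genuinely different route from the paper's: you construct a cohomology operation and dualize it, rather than working directly in homology.

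\textbf{The paper's route.} The paper stays in homology. It uses the known functorial computation of $H_*(\overline{B}\F_p[G])=H_*(G;\F_p)$, taken from Brown's book for $p$ odd and from a separate reference for $p=2$. This gives a natural surjection $H_{2p}(\overline{B}\F_p[G])\to\Gamma^p({}_pG)$. The paper then composes with the Verschiebung $\Gamma^p({}_pG)\to{}_pG$ and with the isomorphism ${}_pG\simeq G/pG$ of lemma~\ref{lm:tenstor}.

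\textbf{How your route corresponds to it.} Your map $u\mapsto(\rho\delta u)^p$ is the cohomological counterpart of that composite. Frobenius on the degree-$2$ Bockstein classes is dual to the Verschiebung on the divided-power part. Your isomorphism $\Hm_\Z(G,\Z/q)/p\simeq\Hm_\Z(G,\F_p)$ plays the role that lemma~\ref{lm:tenstor} plays in the paper. Taking the Bockstein of $0\to\Z\to\Z\to\Z/q\to0$, rather than the mod-$p$ Bockstein, is the right choice: it still produces the nonzero degree-$2$ generator when $p^2\mid q$, where the ordinary mod-$p$ Bockstein vanishes.

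\textbf{What your route buys.} It is more self-contained. You only need cup products, one connecting map, and periodicity for a single cyclic group, and $p=2$ is handled uniformly, including $q\equiv 2 \bmod 4$. You do not need any description of the whole homology functor. Your $\theta_G$ is in fact injective, since the classes $y_i^p$ are linearly independent in $H^{2p}((\Z/q)^r;\F_p)$. So your dual map is surjective, exactly like the paper's, although the lemma only asks for nonvanishing.

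\textbf{What it costs.} The dualization steps $H_{2p}\simeq (H^{2p})^*$ and $G/pG\simeq\Hm(\Hm(G,\F_p),\F_p)$ require finite-dimensionality. This covers finitely generated free $\Z/q\Z$-modules, that is, the objects of $\langle C\rangle$, which is all the paper uses. For free modules of infinite rank you would need an extra filtered-colimit argument to see that the map lands in $G/pG$ rather than in its double dual. The paper's direct homological map needs no such argument.
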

\begin{proof}
	There is a natural surjective morphism $H_{2p}(\overline{B}\F_p[G])\to \Gamma^p({}_pG)$, where the target is the $p$-homogeneous part of the free $\F_p$-divided power algebra on the $p$-torsion subgroup $_pG$, see~\cite[V~Theorem~(6.6)]{br-94} in odd characteristic and~\cite[Corollary~4.2]{iv-za-19} in characteristic 2. Composing this map with the Verschiebung map $\Gamma^p({}_p G)\to {}_pG$ and the natural isomorphism $_pG\simeq G/pG$ of lemma~\ref{lm:tenstor}, we obtain a natural surjective map $H_{2p}(\overline{B}\F_p[G])\to  G/pG$.    
\end{proof}

In order to extend the result of lemma~\ref{lm:pfn1Fp} to $\k=\Z$ by the universal coefficient theorem, we need the following elementary non-functorial information on the cohomology of $H_*(\overline{B}\Z[G])$.
\begin{lemma}\label{lm:descriptionHBGZ}
	If $C$ is a cyclic group of cardinal $q$, then for all positive integers $i$ and $r$, $H_i(\overline{B}\Z[C^{\oplus r}])$ is a free $\Z/q\Z$-module.
\end{lemma}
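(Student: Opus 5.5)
We identify $H_i(\overline{B}\Z[C^{\oplus r}])$ with the integral group homology $H_i(C^{\oplus r};\Z)$. Then we show that this group is a finitely generated abelian group of exponent dividing $q$ whose mod-$p$ Betti numbers are the same for every prime $p$ dividing $q$; this forces freeness over $\Z/q\Z$.

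\textbf{Step 1: identification and exponent.} By~\cite[Lemma~A.15]{gu-ma-74} (used in the proof of proposition~\ref{pr:zigzagforEML}), $\overline{B}\Z[G]\simeq C_*(\overline{W}G;\Z)$. Hence $H_i(\overline{B}\Z[G])\simeq H_i(BG;\Z)$ is the group homology of $G=C^{\oplus r}$. For $i>0$ these groups are finitely generated, and they are annihilated by $|G|=q^r$. The sharper statement that they are annihilated by $q$ follows from naturality. Multiplication by $q$ on $G$ is the zero endomorphism, and it acts on $H_i(G;\Z)$ compatibly with the Künneth decomposition. I would argue more directly as follows. The Künneth formula (including $\Tor$ terms) expresses $H_i(C^{\oplus r};\Z)$ as an iterated sum of tensor products and $\Tor_1^\Z$'s of the groups $H_j(C;\Z)$. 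These are $\Z$ in degree $0$, $\Z/q$ in odd degrees, and $0$ in positive even degrees. Since $\Z/q\otimes\Z/q\simeq \Tor_1^\Z(\Z/q,\Z/q)\simeq\Z/q$, induction on $r$ shows that for $i>0$ the group $H_i(C^{\oplus r};\Z)$ is a finite direct sum of copies of $\Z/q$. This is exactly a free $\Z/q\Z$-module, so Steps 2 and 3 below are only a fallback.

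\textbf{Step 2: the induction.} Write $M_r=\bigoplus_{i}H_i(C^{\oplus r};\Z)$ as a graded group. The Künneth short exact sequence
\[0\to \bigoplus_{j+k=i}H_j(C^{\oplus r})\otimes H_k(C)\to H_i(C^{\oplus r+1})\to \bigoplus_{j+k=i-1}\Tor_1^\Z(H_j(C^{\oplus r}),H_k(C))\to 0\]
splits, non-naturally. Splitting is enough here, because the statement is non-functorial. By the induction hypothesis, every term is either $\Z$ (only when $j=k=0$, which gives degree $0$) or a sum of copies of $\Z\otimes\Z/q$, $\Z/q\otimes\Z$, $\Z/q\otimes\Z/q$ or $\Tor_1^\Z(\Z/q,\Z/q)$. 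All of these are $\Z/q$, so in each positive degree we get a finite sum of copies of $\Z/q$.

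\textbf{Step 3: where the difficulty lies.} The base case $H_*(\Z/q;\Z)$ is the standard computation from the periodic resolution. The only point needing care is that the identification $H_*(\overline{B}\Z[G])\simeq H_*(G;\Z)$ holds with the normalized reduced bar construction over $\Z$. This is supplied by the quasi-isomorphism $\overline{B}\Z[G]\simeq C_*(\overline{W}G;\Z)$ recalled above, or alternatively by observing that $\overline{B}\Z[G]$ is $\Z\otimes_{\Z[G]}$ applied to the normalized bar resolution. I expect no real obstacle beyond keeping track of the fact that the Künneth splitting is not natural. The lemma asserts nothing natural, so this is harmless.
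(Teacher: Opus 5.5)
Your direct argument (computing $H_*(\Z/q\Z;\Z)$ from the periodic resolution, then inducting on $r$ through the non-naturally split K\"unneth sequence, using $\Z/q\Z\otimes\Z/q\Z\simeq\Tor_1^\Z(\Z/q\Z,\Z/q\Z)\simeq\Z/q\Z$) is exactly the paper's proof, which quotes Brown for $r=1$ and then invokes the K\"unneth theorem. You should delete the strategy announced in your opening paragraph and the ``naturality'' remark in Step~1, because neither is valid as stated: for $q=4$ the group $\Z/2\Z$ has exponent dividing $q$ but is not free over $\Z/4\Z$, and $q\cdot\mathrm{id}_G$ does not in general induce multiplication by $q$ on $H_i(G;\Z)$; your actual K\"unneth computation does not depend on either.
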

\begin{proof}
	The result is known for $r=1$ by an explicit computation~\cite[II~(3.1)]{br-94}, and the result for arbitrary $r$ follows by the K\"unneth theorem.
\end{proof}
\begin{lemma}\label{lm:pfn1Z}
	Let $p$ be a prime dividing $q$. For all free $\Z/q\Z$-modules $G$, there is a non-zero morphism $H_{2p-1}(\overline{B}\Z[G])\to G/pG$ natural with respect to $G$.
\end{lemma}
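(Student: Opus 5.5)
We deduce the statement from lemma~\ref{lm:pfn1Fp} via the universal coefficient theorem. Since $\overline{B}\Z[G]$ is a complex of free abelian groups and $\overline{B}\F_p[G]=\overline{B}\Z[G]\otimes_\Z\F_p$, there is a short exact sequence, natural in $G$,
\[
0\to H_{2p}(\overline{B}\Z[G])\otimes_\Z\F_p\to H_{2p}(\overline{B}\F_p[G])\to \Tor_1^\Z(H_{2p-1}(\overline{B}\Z[G]),\F_p)\to 0.
\]
By lemma~\ref{lm:tenstor}-type arguments, $\Tor_1^\Z(M,\F_p)\simeq {}_pM$ naturally in $M$. Let $\phi\colon H_{2p}(\overline{B}\F_p[G])\to G/pG$ be the nonzero natural map of lemma~\ref{lm:pfn1Fp}.

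The plan has three steps. First, show that $\phi$ vanishes on the image of $H_{2p}(\overline{B}\Z[G])\otimes\F_p$, so that it factors through ${}_pH_{2p-1}(\overline{B}\Z[G])$. Second, extend the resulting nonzero map ${}_pH_{2p-1}\to G/pG$ to a map $H_{2p-1}(\overline{B}\Z[G])\to G/pG$. Third, check that the extension is nonzero.

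For the first step, the key point is that $\phi$ is the composite of the projection onto $\Gamma^p({}_pG)$ with the Verschiebung. In the Bockstein description of $H_*(\overline{B}\F_p[G])$ (Cartan, Browder), the generators $\gamma_p(x)$ of degree $2p$ with $x$ of degree $2$ are precisely the classes with nonzero Bockstein, so they do not come from integral reductions. More concretely, we would argue that the composite of $\phi$ with the reduction map is a natural transformation $H_{2p}(\overline{B}\Z[G])\otimes\F_p\to G/pG$. One would like Pirashvili's lemma to kill this composite, but the source is not obviously a tensor product of reduced functors. So we would instead use lemma~\ref{lm:descriptionHBGZ} together with the Künneth decomposition of the integral homology of $G=C^{\oplus r}$. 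This shows that $H_{2p}(\overline{B}\Z[G])$ is generated by products of classes of lower degree together with classes which reduce to decomposables or to non-$\gamma_p$ parts. Since the Verschiebung kills decomposables, $\phi$ kills these reductions. This is the step I expect to be the main obstacle; if the direct argument fails, a fallback is to compute both sides for $r=1$ explicitly using~\cite[II~(3.1)]{br-94} and conclude by additivity of $G/pG$.

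For the second and third steps, lemma~\ref{lm:descriptionHBGZ} says $M:=H_{2p-1}(\overline{B}\Z[G])$ is a free $\Z/q\Z$-module. Hence multiplication by $q/p$ gives a surjection $M\to {}_pM$ which factors as an isomorphism $M/pM\simeq {}_pM$, natural in $M$ (this is the same observation as in lemma~\ref{lm:tenstor}). Composing $M\to M/pM\simeq {}_pM$ with the map ${}_pM\to G/pG$ from the first step yields a natural morphism $H_{2p-1}(\overline{B}\Z[G])\to G/pG$. It is nonzero because the first map is surjective and $\phi$ is nonzero.
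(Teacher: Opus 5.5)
Your overall route is the paper's. You use the universal coefficient sequence in degree $2p$ and show that the map $\phi$ of lemma~\ref{lm:pfn1Fp} kills the image of $H_{2p}(\overline{B}\Z[G])\otimes_\Z\F_p$. You then precompose the induced map on ${}_pH_{2p-1}(\overline{B}\Z[G])$ with a natural surjection from $H_{2p-1}(\overline{B}\Z[G])$. Your second and third steps match what the paper does with lemmas~\ref{lm:tenstor} and~\ref{lm:descriptionHBGZ}. On a free $\Z/q\Z$-module $M$, multiplication by $q/p$ induces precisely the natural isomorphism $M\otimes_\Z\F_p\simeq {}_pM$ that lemma~\ref{lm:tenstor} provides.

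For the first step, your primary Bockstein/K\"unneth/decomposables sketch is not a proof, and its key claim is inaccurate. When $p^2\mid q$, the mod $p$ Bockstein vanishes identically on $H_*(\overline{B}\F_p[\Z/q\Z])$, yet the degree-$2p$ class there still does not lift to $\Z$. What detects non-liftable classes is the integral connecting map to ${}_pH_{2p-1}(\overline{B}\Z[G])$, not the mod $p$ Bockstein.

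You should discard that sketch and make your fallback the actual argument; it is essentially the paper's. Write $F(G)=H_{2p}(\overline{B}\Z[G])\otimes_\Z\F_p$ and let $\eta\colon F\to G/pG$ be the restriction of $\phi$. Since $H_{2p}(\overline{B}\Z[\Z/q\Z])=0$, we have $F(C)=0$. For $G=C^{\oplus r}$ with projections $\pi_i\colon G\to C$, naturality gives $(\pi_i)_*\circ\eta_G=\eta_C\circ F(\pi_i)=0$ for all $i$. The maps $(\pi_i)_*$ jointly identify $G/pG$ with $(C/pC)^{\oplus r}$, so $\eta_G=0$.

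The paper reaches the same conclusion from the simplicity of $G\mapsto G/pG$. The image of $\eta$ is a subfunctor, hence zero or everything, and it cannot be everything since the source vanishes at $C$. Additivity and simplicity work equally well here; both rest on the single computation $H_{2p}(\overline{B}\Z[\Z/q\Z])=0$.
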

\begin{proof}
	For all $i>0$, lemmas~\ref{lm:descriptionHBGZ} and~\ref{lm:tenstor} yield an isomorphism $_p H_i(\overline{B}\Z[G])\simeq H_i(\overline{B}\Z[G])\otimes_\Z\F_p\;$ natural with respect to the free $\Z/q\Z$-module $G$.
	Thus, the universal coefficient theorem yields a short exact sequence of functors
	\[ 
		0\to 	H_{2p}(\overline{B}\Z[G])\otimes_\Z\F_p\to H_{2p}(\overline{B}\F_p[G])\to H_{2p-1}(\overline{B}\Z[G])\otimes_\Z\F_p\to0.
	\]
	Lemma~\ref{lm:pfn1Fp} yields a natural map $f\colon H_{2p}(\overline{B}\F_p[G])\to G/pG$. Let $f'$ be the restriction of $f$ to $H_{2p}(\overline{B}\Z[G])\otimes_\Z\F_p$. The functor $G\mapsto G/pG$ is simple (indeed, if $x$ is a non-zero element of $G/pG$ then for all $y\in G/pG$, there is a linear map $u\colon G\to G$ such that $u(x)=y$, therefore if $x$ belongs to a  subfunctor of $G/pG$ then every $y$ also belongs to this subfunctor, hence the subfunctor is equal to $G/pG$). Hence $f'$ must be either surjective for all $G$, or zero. But $H_{2p}(\overline{B}\Z[\Z/q\Z])=0$ by~\cite[II~(3.1)]{br-94} hence $f'=0$. Thus $f$ factors through $H_{2p-1}(\overline{B}\Z[G])\otimes_\Z\F_p$, whence the result. 
\end{proof}

The next lemma proves proposition~\ref{prop:Extcomputation2} for $n=1$ and $\k$ arbitrary.
\begin{lemma}
	Let $C=\Z/q\Z$, let $p$ be a prime dividing $q$ and let $\k$ be a commutative ring having a quotient field $\F$ of characteristic $p$. 
	The $\F$-vector space $\bigoplus_{j\ge 0}\Ext^0_{\mathcal{F}(\langle C\rangle,\k)}(H_j(1,\k),A)$ has dimension greater than one.
\end{lemma}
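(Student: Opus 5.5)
We produce two linearly independent nonzero natural transformations into $A$, coming from two different homological degrees: one from $H_1(1,\k)$ and one from $H_{2p-1}(1,\k)$. Since $p\ge 2$, we have $2p-1\ge 3>1$, so these land in different summands $j$ of the direct sum. It therefore suffices to show that $\Ext^0_{\mathcal{F}(\langle C\rangle,\k)}(H_j(1,\k),A)\ne 0$ for $j=1$ and for $j=2p-1$.

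For $j=1$ we use the standard computation $H_1(\overline{B}\k[G])\simeq G\otimes_\Z\k$, natural in $G$. This is the exactness of $\k[G]^{\otimes 2}\to\k[G]\to G\otimes_\Z\k\to 0$ recalled in the proof of proposition~\ref{prop:level1formalityQ}, which holds for every abelian group $G$. We compose it with $G\otimes_\Z\k\to G\otimes_\Z\F\simeq A(G)$, induced by the quotient $\k\to\F$. This composite is surjective, and since $A(C)=C\otimes\F\simeq\F\neq0$ it is nonzero.

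For $j=2p-1$ we reduce to lemma~\ref{lm:pfn1Z}. The complex $\overline{B}\k[G]$ is $\overline{B}\Z[G]\otimes_\Z\k$, a complex of free abelian groups tensored with $\k$. The universal coefficient theorem, which is natural in $G$, therefore gives a natural surjection
\[
H_{2p-1}(\overline{B}\k[G])\twoheadrightarrow \Tor_1^\Z\bigl(H_{2p-2}(\overline{B}\Z[G]),\k\bigr)\quad\text{or a natural injection}\quad H_{2p-1}(\overline{B}\Z[G])\otimes_\Z\k\hookrightarrow H_{2p-1}(\overline{B}\k[G]).
\]
The cleanest route is to tensor with $\F$ instead. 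We would build the natural map as the composite
\[
H_{2p-1}(\overline{B}\k[G])\to H_{2p-1}(\overline{B}\F[G])\simeq H_{2p-1}(\overline{B}\F_p[G])\otimes_{\F_p}\F,
\]
where the first map is induced by $\k\to\F$ and the isomorphism holds because $\F$ is flat over $\F_p$. We then need a natural nonzero map $H_{2p-1}(\overline{B}\F_p[G])\to G/pG$. Running the universal coefficient argument of lemma~\ref{lm:pfn1Z} one degree lower, we have $H_{2p-1}(\overline{B}\Z[G])\otimes\F_p\hookrightarrow H_{2p-1}(\overline{B}\F_p[G])$, and lemma~\ref{lm:pfn1Z} supplies a nonzero natural map from the source to $G/pG$. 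The difficulty is extending it: naturally extending a map along an injection requires an $\Ext^1$ vanishing, and this is the step I expect to be the main obstacle.

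The alternative is to go through $\k$-coefficients directly: use a natural map $H_{2p-1}(\overline{B}\Z[G])\otimes_\Z\k\to H_{2p-1}(\overline{B}\k[G])$ and show that the nonzero map of lemma~\ref{lm:pfn1Z}, tensored with $\F$, extends. The cleanest version I would pursue uses the structure of $H_*(\overline{B}\Z[G])$. By lemma~\ref{lm:descriptionHBGZ} its groups are free $\Z/q$-modules, so the universal coefficient sequence
\[
0\to H_{2p-1}(\overline{B}\Z[G])\otimes\k\to H_{2p-1}(\overline{B}\k[G])\to\Tor_1^\Z\bigl(H_{2p-2}(\overline{B}\Z[G]),\k\bigr)\to0
\]
has terms that are direct sums of $\k/q$ and ${}_q\k$. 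I would then show that the first inclusion admits a natural retraction after applying $-\otimes_\k\F$, or more precisely after composing with the map to $A$. The simplicity argument of lemma~\ref{lm:pfn1Z} suggests how: the obstruction to extending lies in a natural map $\Tor\text{-part}\to$ something built from $G/pG$. Alternatively, evaluating at $G=C$ via~\cite[II~(3.1)]{br-94} shows that $H_{2p-2}(\overline{B}\Z[\Z/q])=0$, which is even degree and positive. The Künneth theorem then controls the $\Tor$ term, and I would check that the $\Tor$ term contributes nothing to the relevant natural map into the simple functor $A$. Granting this, the nonzero map $H_{2p-1}(1,\k)\to A$ exists. Together with the degree-one map, this gives dimension at least two.
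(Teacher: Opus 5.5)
There is a genuine gap in degree $2p-1$. Your degree-one map is correct and is the paper's. The problem is that you fix $j=2p-1$ in all cases. The extension you ``grant'' is not merely hard: it can be genuinely obstructed, because there are admissible $\k$ with $\Ext^0_{\mathcal{F}(\langle C\rangle,\k)}(H_{2p-1}(1,\k),A)=0$.

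Take $\k=\F=\F_p$ and $C=\Z/p\Z$, so that $A(V)=V$ on finite-dimensional $\F_p$-vector spaces.
\begin{itemize}
\item For $p$ odd, $H^*(V;\F_p)\cong\Lambda(V^\vee)\otimes S(\beta V^\vee)$ naturally, with $V^\vee$ in degree $1$ and $\beta V^\vee$ in degree $2$. Dually, $H_{2p-1}(\overline{B}\F_p[V])\cong\bigoplus_{i+2j=2p-1}\Lambda^i(V)\otimes\Gamma^j(V)$ as functors. The summands with $i,j\ge 1$ admit no nonzero map to $A$ by lemma~\ref{lm:pira}. Neither does $\Lambda^{2p-1}$, which is a quotient of $V^{\otimes(2p-1)}$.
\item For $p=2$, $H_3(\overline{B}\F_2[V])\cong\Gamma^3(V)$. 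A natural map $\Gamma^3(V)\to V$ dualizes to an additive natural map $V\to S^3(V)$. Such a map must be $v\mapsto cv^3$, hence zero.
\end{itemize}

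So in this case both of your routes fail. Route~1 is the identity followed by a map $H_{2p-1}(\overline{B}\F_p[V])\to V$, and any such map is zero. In route~2, the nonzero map of lemma~\ref{lm:pfn1Z} on the subfunctor $H_{2p-1}(\overline{B}\Z[V])\otimes\F_p$ does not extend to $H_{2p-1}(\overline{B}\F_p[V])$.

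Your evaluation at $G=C$ cannot detect this. By K\"unneth, $H_{2p-2}(\overline{B}\Z[C^{\oplus r}])\neq 0$ for $r\ge 2$. More generally, vanishing of a functor at $C$ says nothing about an $\Ext^1$ obstruction; think of $G\mapsto\Lambda^2_\Z(G)\otimes\F_p$.

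The missing idea is to move up one degree when the $\Tor$ term is present, which is what the paper does. Set $T=\Tor_1^\Z(\Z/q\Z,\k)$. Lemmas~\ref{lm:descriptionHBGZ} and~\ref{lm:tenstor} turn the universal coefficient sequence into $0\to H_j(\overline{B}\Z[G])\otimes\k\to H_j(\overline{B}\k[G])\to H_{j-1}(\overline{B}\Z[G])\otimes_\Z T\to 0$, natural in $G$.
\begin{itemize}
\item If $T=0$, your argument works verbatim: $H_{2p-1}(\overline{B}\k[G])\cong H_{2p-1}(\overline{B}\Z[G])\otimes\k\twoheadrightarrow H_{2p-1}(\overline{B}\Z[G])\otimes\F$, followed by lemma~\ref{lm:pfn1Z}.
\item If $T\neq 0$, use the surjection onto the $\Tor$ term that you yourself wrote down, but in degree $2p$. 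This gives $H_{2p}(\overline{B}\k[G])\twoheadrightarrow H_{2p-1}(\overline{B}\Z[G])\otimes_\Z T\twoheadrightarrow H_{2p-1}(\overline{B}\Z[G])\otimes\F_p\to G/pG\hookrightarrow A(G)$. The second arrow comes from a surjection $T\to\F_p$, and the third from lemma~\ref{lm:pfn1Z}. Since $2p\neq 1$, this still yields the second independent class.
\end{itemize}
A surjection $T\to\F_p$ requires the $p$-primary part of $T$ to be nonzero. If only the prime-to-$p$ part of $T$ is nonzero, first localize $\k$ at $\ker(\k\to\F)$. This kills $T$ and brings you back to the first case.
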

\begin{proof}
	The quotient map $\k\to\F$ induces a surjective map $H_1(\overline{B}\k[G])=G\otimes_\Z\k\to G\otimes_\Z\F=A(G)$, natural with respect to $G$. Hence $\Ext^0_{\mathcal{F}(\langle C\rangle,\k)}(H_1(1,\k),A)$ is non-zero. 
	
	Thus, it remains to exhibit a non-zero morphism $H_j(1,\k)\to A$ for some $j\ne 1$.
	Let 
	\[
		T:=\Tor^\Z_1(\Z/q\Z,\k). 
	\]
	If $G$ is a free $\Z/q\Z$-module, lemmas~\ref{lm:descriptionHBGZ} and~\ref{lm:tenstor} yield an isomorphism 
	\[
		H_{j-1}(\overline{B}\Z[G])\otimes_\Z T\simeq \Tor^\Z_1(H_{j-1}(\overline{B}\Z[G]),\k).
	\]
	Thus for all positive integers $i$, the universal coefficient theorem yields a short exact sequence, natural with respect to $G$:
	\begin{equation}\label{eq:uctt}
		0\to H_j(\overline{B}\Z[G])\otimes_\Z\k\to H_j(\overline{B}\k[G])\to H_{j-1}(\overline{B}\Z[G])\otimes_\Z T\to 0\;.
	\end{equation}
	
	Assume first that $T=0$, then we claim that there is a non-zero morphism $H_{2p-1}(1,\k)\to A$. Indeed $H_{2p-1}(\overline{B}\k[G])$ is isomorphic to $H_{2p-1}(\overline{B}\Z[G])\otimes_\Z\k$ by~\eqref{eq:uctt}, hence it surjects onto $H_{2p-1}(\overline{B}\Z[G])\otimes_\Z\F$, and by lemma~\ref{lm:pfn1Z} there is a non-zero natural map from the latter to $G/pG\otimes_\Z \F =A(G)$.
	
	Assume now that $T\ne 0$, then we claim that there is a nonzero morphism $H_{2p}(1,\k)\to A$. Indeed, $T$ is a non-zero $\Z/q\Z$-module, hence there is a morphism of abelian groups $T\to \F_p$. Together with the quotient map of the exact sequence~\eqref{eq:uctt}, this morphism yields a surjective natural map:
	$H_j(\overline{B}\k[G])\to H_{j-1}(\overline{B}\Z[G])\otimes_\Z\F_p$. And lemma~\ref{lm:pfn1Z} provides a nonzero natural map
	\[
		H_{2p-1}(\overline{B}\Z[G])\otimes_\Z\F_p \to G/pG\hookrightarrow G/pG\otimes_\Z\F=A(G)
	\]
	whence the result.
\end{proof}

\subsection{Proof of proposition~\ref{prop:Extcomputation2} when $n>1$ and $C$ is finite}\label{subsec:proofn} 
We recall~\cite[Section~14]{ei-ml-53},~\cite[Exp.~6]{ca-54} that for all commutative rings $\k$ and for all $n>0$, the \emph{suspension} is a morphism of graded $\k$-modules 
\[
	\sigma\colon H_*(\overline{B}^n\k[G])\to H_{*+1}(\overline{B}^{n+1}\k[G])
\]
natural with respect to $G$, which vanishes on products of classes of positive degrees (a.k.a. \emph{decomposable elements}). Moreover, the suspension induces an isomorphism in degrees $0< *< 2n$, hence the modules $H_{i+n}(\overline{B}^n\k[G])$ do not depend on $n$ if $n>i$, they are called the \emph{stable homology of $G$} and denoted by $H_i^{\mathrm{st}}(G,\k)$. 
For $\k=\F_p$, the stable homology was described as a functor in~\cite[Exp.~10,~\S6]{ca-54}, in particular for all $i\ge 0$,  $H_i^{\mathrm{st}}(G,\F_p)$ is a finite direct sum of copies of $_pG$ and $G/pG$ (with multiplicities known and independent on $G$, where $_pG$ denotes the $p$-torsion subgroup of $G$).

We also recall~\cite[Exp.~6]{ca-54} that the homology of the bar constructions $\overline{B}^n\F_p[G]$ is also equipped with further homology operations, namely the $p$-th divided power $\gamma_p$ and transpotence $\phi_p$. Suitable words (called  \emph{$p$-admissible words} in~\cite{ca-54}) in the letters $\gamma_p$, $\phi_p$ and $\sigma$ encode compositions of homology operations which yield injective morphisms 
\[
	G/pG\to H_*(\overline{B}^n\F_p[G]) \qquad\text{ and }\qquad  {}_pG\to H_*(\overline{B}^n\F_p[G])
\]
whose images generate $H_*(\overline{B}^n\F_p[G])$ as a free divided power algebra, see the `théorème fondamental' of~\cite[Exp.~9,~\S3]{ca-54} if $p$ is odd and of~\cite[Exp.~10,~\S2]{ca-54} if $p=2$.

\begin{lemma}\label{lm:stablemap}
	Let $q$ be a positive integer and let $p$ be a prime dividing $q$. For all $n\ge 2$, let $\theta_*\colon H_{*}(\overline{B}^n\Z[G])\to H_{*}(\overline{B}^n\F_p[G])$ denote the graded morphism induced by the morphism $\Z\to \F_p$. For all free $\Z/q\Z$-modules $G$, there is a morphism $f\colon H_{2p+n-1}(\overline{B}^n\F_p[G])\to G/pG$, natural with respect to $G$ such that $f\circ \theta_{2p+n-1}\ne 0$.
\end{lemma}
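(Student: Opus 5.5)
Since $G\mapsto G/pG$ is a simple functor (as in the proof of lemma~\ref{lm:pfn1Z}), we reduce the statement to showing that the composite $f\circ\theta_{2p+n-1}$ is nonzero on one well-chosen natural class. The natural strategy is to transport lemma~\ref{lm:pfn1Z} from $n=1$ to height $n$ by iterated suspension, working in the stable range, where the homology is additive and explicitly described.

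\textbf{Step 1: an integral class.} Lemma~\ref{lm:pfn1Z} gives a nonzero natural map $g\colon H_{2p-1}(\overline{B}\Z[G])\to G/pG$. For $G$ free over $\Z/q\Z$, $H_{2p-1}(\overline{B}\Z[G])$ is the group homology $H_{2p-1}(G;\Z)$. By Künneth it contains the additive summand $\bigoplus_{\text{cyclic summands}} H_{2p-1}(\Z/q;\Z)\simeq G$. The remaining summands are decomposable, namely shuffle products and Tor-terms, and $g$ kills them. I would use Pirashvili's lemma~\ref{lm:pira} here: decomposable pieces are images of tensor products of reduced functors, so they admit no nonzero map to the additive $G/pG$. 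Hence $g$ is detected on indecomposables.

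\textbf{Step 2: suspend.} Apply the suspension $\sigma^{n-1}$ to $H_{2p-1}(\overline{B}\Z[G])$, landing in $H_{2p+n-2}(\overline{B}^n\Z[G])$. This degree is off by one from the target, so the exact bookkeeping must be corrected. The degree $2p+n-1$ in the statement suggests instead starting from a class of degree $2p$ in the height-one mod $p$ homology: the $\Gamma^p({}_pG)$-type class of lemma~\ref{lm:pfn1Fp}, or equivalently the Bockstein partner of the integral class in degree $2p-1$. One then suspends $n-1$ times, giving degree $2p+n-1$.

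\textbf{Step 3: construct $f$ via Cartan.} The construction of $f$ itself uses Cartan's description. $H_*(\overline{B}^n\F_p[G])$ is a free divided power algebra generated by images of $G/pG$ and ${}_pG$ under admissible words. In degree $2p+n-1$ (with $n\ge2$), one generator is given by the word $\sigma^{n-2}\phi_p$ or $\sigma^{n-1}\gamma_p$ applied to a degree-one or degree-two class. Define $f$ by projecting onto the indecomposable summand spanned by this generator and identifying the resulting copy of ${}_pG$ or $G/pG$ with $G/pG$ via lemma~\ref{lm:tenstor}. Naturality follows because the homology operations are natural and the decomposition into indecomposables modulo decomposables is natural.

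\textbf{Step 4: nonvanishing (the main obstacle).} One must show that $f\circ\theta_{2p+n-1}\ne0$, i.e. that the chosen generator is the reduction of an integral class, rather than only a Bockstein image. I would test this on $G=\Z/q\Z$, using the universal coefficient sequence
\[
0\to H_{2p+n-1}(\overline{B}^n\Z[G])\otimes\F_p\to H_{2p+n-1}(\overline{B}^n\F_p[G])\to \Tor(H_{2p+n-2}(\overline{B}^n\Z[G]),\F_p)\to0.
\]
I would check that the image of $\theta$ (the first term) contains the chosen generator. This amounts to verifying that the higher Bockstein on that generator vanishes. If the transpotence generator turns out to support a nontrivial Bockstein, I would switch to the generator on which $\beta$ lands, since that one is integral, and redefine $f$ accordingly. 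Once a single $G$ gives a nonzero composite, simplicity of $G/pG$ yields nonvanishing as a natural transformation.
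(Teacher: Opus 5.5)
\textbf{There is a genuine gap: Step~4, which is the entire content of the lemma, is left as a plan rather than proved.}

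Your Steps~1--2 are a dead end. As you notice, the integral class of lemma~\ref{lm:pfn1Z} sits in stable degree $2p-2$, not $2p-1$. Step~3 only produces a candidate $f$. What the lemma actually requires is an integral class $u$ with $f(\theta_{2p+n-1}(u))\neq 0$. For this you offer a test to be run and a contingency plan, not an argument.

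The criterion you set up for Step~4 is also confused. The dichotomy ``reduction of an integral class, rather than only a Bockstein image'' is false. The mod $p$ Bockstein $\beta_p$ is the integral Bockstein followed by $\theta_*$, so every Bockstein image already lies in $\mathrm{im}\,\theta_*$. You also do not need the generator itself to be integral, i.e.\ to have all its higher Bocksteins vanish. You only need an integral class congruent to it modulo $\ker f$. Finally, the transpotence word $\sigma^{n-2}\phi_p$ applied to the degree-two class lands in degree $2p+n$, not $2p+n-1$. So it is not a competing generator in the degree you project from.

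\textbf{The missing ingredient} is Cartan's relation between Bockstein and transpotence (Cartan, Exp.~8, Thm.~1): $\beta_p\phi_p$ agrees with $\sigma\gamma_p$ up to sign and decomposable elements. Let $\alpha\colon{}_pG\to H_2(\overline{B}\F_p[G])$ be the operation producing the degree-two generators, and let $0\neq x\in{}_pG$. Since Bocksteins commute with suspensions, the class $y=\beta_p\sigma^{n-2}\phi_p\alpha(x)$ agrees with $\sigma^{n-1}\gamma_p\alpha(x)$ up to sign and decomposables. Moreover $y\in\mathrm{im}\,\theta_{2p+n-1}$. Hence if $f$ kills decomposables and detects $\sigma^{n-1}\gamma_p\alpha(x)$, then $f\circ\theta_{2p+n-1}\neq 0$.

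This is also where $n\ge 2$ enters. In height one, $\gamma_p\alpha(x)$ is not integral: for $G=\Z/q\Z$ one has $H_{2p}(\overline{B}\Z[G])=0$. It becomes integral modulo decomposables only after one suspension, because the transpotence $\phi_p\alpha(x)$ lives in height two.

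For $f$, the paper takes a coordinate of the iterated suspension $\sigma^N\colon H_{2p+n-1}(\overline{B}^n\F_p[G])\to H^{\mathrm{st}}_{2p-1}(G,\F_p)\simeq (G/pG)^{\oplus s}$ that does not vanish on $\sigma^{n-1}\gamma_p\alpha(x)$, which injects stably. Such an $f$ is automatically natural and kills decomposables. Your projection onto a summand of the indecomposables can be made to work, but it needs extra care with divided powers in even degrees.
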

\begin{proof}
	Let $G$ be an arbitrary abelian group.
	Let $\alpha=\phi_2$ if $p$ is odd and $\alpha=\gamma_2\sigma$ if $p=2$. Then for all $N\ge 0$, the word $\sigma^{N+n-1}\gamma_p\alpha$ is $p$-admissible, and it is associated to an injective composition of homology operations:
	\[
		_pG\xrightarrow[]{\alpha}H_2(\overline{B}\F_p[G])\xrightarrow[]{\sigma\gamma_p} H_{2p+1}(\overline{B}^2\F_p[G])\xrightarrow[]{\sigma^{N+n-2}}H_{2p+N+n-1}(\overline{B}^{n+N}\F_p[G]).
	\]
	In particular, if $N\gg 0$, this composition of operations yields an injective morphism from $_pG$ to the stable homology $H^{\mathrm{st}}_{2p-1}(G,\F_p)$.
	
	Now we restrict our attention to free $\Z/q\Z$-modules $G$, hence lemma~\ref{lm:tenstor} yields a natural isomorphism $_pG\simeq G/pG$, thus $H_{2p-1}^{\mathrm{st}}(G,\F_p)$ is naturally isomorphic to $G/pG^{\oplus s}$ for some $s>0$. We let $f\colon H_{2p+n-1}(\overline{B}\F_p[G])\to G/pG$ be a coordinate of the morphism $\sigma^N\colon H_{2p+n-1}(\overline{B}\F_p[G])\to H_{2p-1}^{\mathrm{st}}(G,\F_p)\simeq G/pG^{\oplus s}$ which does not vanish on the image of $\sigma^{n-1}\gamma_p\alpha$ in $H_{2p+n-1}(\overline{B}^{n}\F_p[G])$. 
	
	In order to prove the lemma, it remains to construct an element $u\in H_{2p+n-1}(\overline{B}^n\Z[G])$ such that $f(\theta_{2p+n-1}(u))\ne 0$. Let $x\in {}_pG$ be a nonzero element and let $y=\beta_p\sigma^{n-2}\phi_p\alpha(x)$, where $\beta_p$ denotes the Bockstein operation of $H_{2p+n-1}(\overline{B}^n\F_p[G])$. Since Bocksteins commute with suspensions,~\cite[Exp.~8, Thm.~1]{ca-54} shows that $y-\sigma^{n-1}\gamma_p\alpha(x)$ is decomposable (and it is actually zero if $p$ is odd). Since $f$ vanishes on decomposable elements, we obtain $f(y)=f(\sigma^{n-1}\gamma_p\alpha(x))\ne 0$. 
	Now the bockstein $\beta_p$ factors through $\theta_*$ (see e.g.~\cite[Exp.~8,~p.8-03]{ca-54}), hence $y=\theta_{2p+n-1}(u)$ for some element $u\in H_{2p+n-1}(\overline{B}^n\Z[G])$, which finishes the proof. 
\end{proof}
The next lemma proves proposition~\ref{prop:Extcomputation2} for $n>1$ and $\k$ arbitrary.
\begin{lemma}
	Let $C=\Z/q\Z$, let $p$ be a prime dividing $q$ and let $\k$ be a commutative ring having a quotient field $\F$ of characteristic $p$. For all $n\ge 2$,
	the $\F$-vector space $\bigoplus_{j\ge 0}\Ext^0_{\mathcal{F}(\langle C\rangle,\k)}(H_j(n,\k),A)$ has dimension greater than one.
\end{lemma}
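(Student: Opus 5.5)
Following the $n=1$ case, I would produce two linearly independent nonzero natural maps $H_j(n,\k)\to A$ for two different degrees $j$. Since $\Ext^0$ into $A$ of different summands $H_j$ contribute separately to the direct sum, two nonzero maps in distinct degrees already give dimension at least two.

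The first map lives in degree $n$. By the suspension isomorphism $H_n(\overline{B}^n\k[G])\simeq H_1(\overline{B}\k[G])= G\otimes_\Z\k$, or directly by Hurewicz, I would compose with $G\otimes_\Z\k\to G\otimes_\Z\F=A(G)$. This gives a surjective, hence nonzero, natural transformation $H_n(n,\k)\to A$.

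The second map lives in degree $j=2p+n-1$ or $j=2p+n$, according to whether $T:=\Tor^\Z_1(\Z/q\Z,\k)$ vanishes. The first step is to get a universal coefficient sequence that is natural in $G$:
\[
0\to H_j(\overline{B}^n\Z[G])\otimes_\Z\k\to H_j(\overline{B}^n\k[G])\to \Tor_1^\Z(H_{j-1}(\overline{B}^n\Z[G]),\k)\to 0 .
\]
We lack the analogue of lemma~\ref{lm:descriptionHBGZ} for $n\ge2$, so I would not try to rewrite the $\Tor$ term as a tensor product. Instead I would use two facts:
\begin{itemize}
\item[(a)] there is a natural map $H_{j}(\overline{B}^n\Z[G])\otimes_\Z\k\to H_j(\overline{B}^n\Z[G])\otimes_\Z\F_p\otimes_{\F_p}\F$;
\item[(b)] the natural morphism $\theta$ factors through $H\otimes_\Z\F_p\hookrightarrow H_*(\overline{B}^n\F_p[G])$.
\end{itemize}
Lemma~\ref{lm:stablemap} gives $f$ with $f\circ\theta_{2p+n-1}\ne0$. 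Hence $f$ restricted to $H_{2p+n-1}(\overline{B}^n\Z[G])\otimes\F_p$ gives a natural map $g\colon H_{2p+n-1}(\overline{B}^n\Z[G])\to G/pG$ that is nonzero.

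\emph{Case $T=0$.} Here $q$ is invertible up to $p$-part issues, and I would check that $\Tor_1^\Z(-,\k)$ vanishes on the $q$-torsion groups involved. Granting that, $H_{2p+n-1}(n,\k)\simeq H_{2p+n-1}(\overline{B}^n\Z[G])\otimes\k$. I then compose with $g\otimes\F$ to reach $G/pG\otimes\F=A(G)$, and the result is nonzero because $g$ is surjective: the image is a nonzero subfunctor of the simple functor $G/pG$.

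\emph{Case $T\ne0$.} I choose a surjection $T\to\F_p$. Since $H_{2p+n-1}(\overline{B}^n\Z[G])$ is $q$-torsion (every positive-degree homology of $K(G,n)$ is killed by $q$ because $G$ is), the map $\Z/q\otimes\k\to\ldots$ yields a natural map
\[
\Tor_1(H,\k)\to \Tor_1(H,\Z/q)\otimes\ldots\to H\otimes_\Z \F_p .
\]
Concretely, I would use $\Tor_1(H,\k)\to\Tor_1(H,\k/p)$, and the naturality of $\Tor_1(H,\Z/q\Z)\simeq {}_qH$.

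I expect this case to be the main obstacle: producing a nonzero natural map from $\Tor_1(H_{2p+n-1}(\overline{B}^n\Z[G]),\k)$ to $A$ without lemma~\ref{lm:descriptionHBGZ}. The fallback I would try first is to work with the mod-$p$ Bockstein element $u$ from the proof of lemma~\ref{lm:stablemap}. Since $u=\beta_p(z)$ is an image of a Bockstein, it has order $p$, so it lies in ${}_pH$ and pairs against the $T$-part. Once a nonzero natural map into $G/pG$ exists, it is automatically surjective by simplicity of $G/pG$, which gives the second nonzero element of $\Ext^0$.
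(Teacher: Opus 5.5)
Your degree-$n$ map is correct and is the same as the paper's. The gap is in the second map. You build it from the universal coefficient sequence over $\k$ and split on $T=\Tor_1^\Z(\Z/q\Z,\k)$. You leave the case $T\neq 0$ open, and the ingredients you suggest for it are false in general.

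\emph{First}, positive-degree integral homology of $K(G,n)$ is not killed by $q$. For $q=2$, $n=2$ one has $H_4(\overline{B}^2\Z[\Z/2])\cong H_4(K(\Z/2,2);\Z)\cong\Gamma(\Z/2)\cong\Z/4$, where $\Gamma$ is Whitehead's quadratic functor. What is true, for finite $G$, is that these groups are finite and their orders involve only primes dividing $q$. This is what actually rescues your case $T=0$: ${}_q\k=0$ forces ${}_{\ell^a}\k=0$ for every prime $\ell\mid q$.

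\emph{Second}, a surjection $T\to\F_p$ need not exist, and then nothing that factors through the $\Tor$ quotient can be nonzero. Take $q=6$, $p=3$, $\k=\Z/2\times\Z$ and $\F=\F_3$. Then $T\cong\Z/2$, and $\Tor_1^\Z(H,\k)\cong{}_2H$ has no nonzero additive map to the $\F_3$-vector space $A(G)$. So the dichotomy on $T$ is not the relevant one.

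The missing idea is that no extension problem arises. You never need to extend a map off the subobject $H_{2p+n-1}(\overline{B}^n\Z[G])\otimes_\Z\k$. The map $f$ of lemma~\ref{lm:stablemap} is defined on \emph{all} of $H_{2p+n-1}(\overline{B}^n\F_p[G])$, and by passing to $g=f\circ\theta$ you throw this away.

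The paper stays in degree $2p+n-1$ for every $\k$, with no case distinction and no universal coefficient sequence over $\k$:
\begin{enumerate}
\item Reduce coefficients along $\k\to\k/p\k$.
\item Since $\k/p\k$ is free over $\F_p$, $H_*(\overline{B}^n(\k/p\k)[G])\cong H_*(\overline{B}^n\F_p[G])\otimes_{\F_p}\k/p\k$.
\item Apply $f\otimes\k/p\k$, then $G/pG\otimes\k/p\k\twoheadrightarrow G/pG\otimes\F=A(G)$.
\end{enumerate}
This gives a natural map $H_{2p+n-1}(n,\k)\to A$. It is nonzero because the maps induced by $\Z\to\k\to\k/p\k$ and by $\Z\to\F_p\to\k/p\k$ agree. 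Hence the image in $H_{2p+n-1}(\overline{B}^n\k[G])$ of an integral class $u$ with $f(\theta(u))\neq0$ goes to $f(\theta(u))\otimes1\neq0$ in $G/pG\otimes_{\F_p}\F$. This is exactly why lemma~\ref{lm:stablemap} is phrased as $f\circ\theta\neq0$ with $f$ defined on the full mod-$p$ homology.
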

\begin{proof}
	The homology $H_n(\overline{B}^n\k[G])$ is naturally isomorphic to $G\otimes_\Z\k$, hence it has $G\otimes_\Z\F$ as a natural quotient, which proves that there is a non-zero morphism $H_n(n,\k)\to A$. Thus, in order to prove the lemma, it suffices to prove that there is a non-zero morphism $H_{2p+n-1}(n,\k)\to A$. 
	There is a commutative diagram of abelian groups, natural with respect to $G$, in which the unadorned arrows are induced by the ring morphisms $\Z\to \k$, $\k\to \k/p\k$, $\Z\to \F_p$ and $\F_p\to \k/p\k$, $f$ is the map provided by lemma~\ref{lm:stablemap} and $f'$ is the unique map making the right square commutative:
	\[
	\begin{tikzcd}
		H_{2p+n-1}(\overline{B}^n\k[G])\ar{r}{}&H_{2p+n-1}(\overline{B}^n\k/p\k[G])\ar[dashed]{r}{f'}&A(G)=G/pG\otimes_\Z\F\\
		H_{2p+n-1}(\overline{B}^n\Z[G])\ar{r}{}\ar{u}{}&H_{2p+n-1}(\overline{B}^n\F_p[G])\otimes_\Z\k/p\k\ar{u}{\simeq}\ar{r}{f\otimes\k/p\k}&G/pG\otimes_\Z\k/p\k\ar[two heads]{u}
	\end{tikzcd}.
	\]
	By lemma~\ref{lm:stablemap} the composite map from the bottom left corner to the top right corner is non-zero. Therefore the first row of the diagram provides a non-zero morphism $H_{2p+n-1}(n,\k)\to A$.
\end{proof}
	
\section{Applications}\label{sec:app}

We now explain how formality results apply to the study of singular (co)homology of homotopy colimits. 
Recall that if $\Mm$ is a cofibrantly generated model category (such as topological spaces or simplicial sets), the category $\Mm^\Cc$ of diagrams of shape $\Cc$ in $\Mm$ carries a model structure called the \emph{projective model structure}. The homotopy colimit of a diagram $D\colon\Cc\to \Mm$ is then defined as the colimit of a cofibrant replacement of $D$ in this projective model structure. 

Recall furthermore that the simplicial replacement of a diagram $D\colon\Cc\to \Mm$ is the simplicial object in $\Mm$ denoted by $\srep_\bullet D$ which is given in degree $n$ by 
\[
	\srep_nD=\bigsqcup_{\sigma =c_0\ot \cdots\ot c_{n}\in N_{n}(\Cc)} D(c_{n})\;,
\]
where $N_\bullet(\Cc)$ denotes the nerve of $\Cc$. If $s_i'$ and $d_i'$ denote the  degeneracies and the faces of the nerve of $\Cc$, then the degeneracy $s_i\colon\srep_{n-1}D\to \srep_{n}D$ sends the component indexed by $\sigma$ identically to the component indexed by $s_i'(\sigma)$, and the face $d_i\colon\srep_nD\to \srep_{n-1}D$ sends the component indexed by $\sigma=c_0\ot \cdots\ot c_{n}$ to the component indexed by $d_i'(\sigma)$, identically if $i<n$, or acts as $D(c_n\to c_{n-1})$ if $i=n$.
When $\Mm=\top$ or $\sset$, the homotopy colimit of a diagram can be alternatively computed as the geometric realization of its simplicial replacement, namely it follows from~\cite[Theorem~18.7.5~(1)]{hi-03} and~\cite[Corollary~A.6]{du-is-04} that there is a weak equivalence:
\[
	\hoc D \xrightarrow[]{\sim} |\srep_\bullet(D)|\;.
\]

We are interested in the singular homology of the homotopy colimit of a diagram $D$ of spaces (topological spaces or simplicial sets). Let $C_{**}(D)$ denote the first quadrant homological bicomplex of $\k$-modules whose columns are the singular chain complexes $C_{p*}(D) = C_*(\srep_pD;\k)$ and whose horizontal differentials $C_{p\,q}(D)\to C_{(p-1)\,q}(D)$ are induced by the simplicial structure of $\srep_\bullet D$.
\begin{lemma}\label{lm:abut}
	For all diagrams $D$ in $\sset$ or $\top$, there is a zig-zag of homotopy equivalences natural with respect to $D$, connecting the chain complexes $\Tot C_{**}(D)$ and $C_*(\hoc D;\k)$.
\end{lemma}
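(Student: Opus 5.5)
The plan is to reduce everything to simplicial sets and then compare two standard models for the chains on the realization of a simplicial space. First, for $D$ in $\top$, apply $\mathrm{Sing}$ objectwise. Since $\mathrm{Sing}$ commutes with coproducts, $\mathrm{Sing}\circ\srep_\bullet D=\srep_\bullet(\mathrm{Sing}\circ D)$, so $C_{**}(D)=C_{**}(\mathrm{Sing}\circ D)$. Moreover $\hoc D$ and $|\hoc(\mathrm{Sing}\circ D)|$ are connected by natural weak equivalences, because $|\cdot|$ is a left Quillen equivalence preserving homotopy colimits and the counit $|\mathrm{Sing}\, D(c)|\to D(c)$ is a natural weak equivalence. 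Singular chains turn weak equivalences into quasi-isomorphisms between complexes of free $\k$-modules, hence into homotopy equivalences; a quasi-isomorphism between bounded-below complexes of projectives is a homotopy equivalence. So it suffices to treat diagrams in $\sset$.

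For $D$ in $\sset$, the weak equivalence $\hoc D\to|\srep_\bullet D|$ recalled above is natural in $D$. Here $|\srep_\bullet D|$ is the diagonal of the bisimplicial set $X_{p,q}=(\srep_pD)_q$, and the realization of a simplicial simplicial set is naturally isomorphic to its diagonal. Applying $C_*(-;\k)$ gives a natural homotopy equivalence $C_*(\hoc D;\k)\to C_*(\mathrm{diag}\,X;\k)$, for the same reason as in the first step. Next, $\k[\mathrm{diag}\,X]=\mathrm{diag}\,\k[X]$, where $\k[X]$ is a bisimplicial $\k$-module. The generalized Eilenberg–Zilber theorem (Dold–Puppe, Cartier) gives natural chain maps, namely Alexander–Whitney and shuffle, between $C_{\mathrm{norm}}\mathrm{diag}\,\k[X]$ and $\Tot$ of the double complex obtained by normalizing $\k[X]$ in both directions. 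These maps are mutually inverse chain homotopy equivalences with natural homotopies.

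The remaining step is to compare this double normalization with $C_{**}(D)$, which is normalized only vertically: its columns are $C_*(\srep_pD;\k)$ and the horizontal differential is the alternating sum of faces, without quotienting by horizontal degeneracies. I would show that, for a simplicial object in chain complexes, quotienting by the degenerate part in the horizontal direction gives a natural chain homotopy equivalence after totalization. The reason is that the degenerate subcomplex is naturally contractible column-filtration-wise: the Dold–Kan normalization theorem provides natural contractions. The first-quadrant, bounded condition then makes the totalized map a homotopy equivalence, either by a filtration argument or by applying the natural homotopies degreewise. Equivalently, one can use Eilenberg–Zilber directly in its unnormalized-horizontal form. Composing these steps yields the required zig-zag.

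I expect the main obstacle to be keeping everything natural and at the level of homotopy equivalences rather than mere quasi-isomorphisms. The cleanest route is to observe that every complex involved is degreewise $\k$-free and bounded below, so every natural quasi-isomorphism in the zig-zag is automatically a homotopy equivalence. The homotopy inverses themselves need not be natural, and the statement only asks for natural maps in the zig-zag.
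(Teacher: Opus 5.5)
Your proposal is correct and follows essentially the same route as the paper: you reduce the topological case to simplicial sets via $\mathrm{Sing}$ and geometric realization, then combine the isomorphism $|\srep_\bullet D|\cong\mathrm{diag}\,\srep_\bullet D$ with the Eilenberg--Zilber theorem. Your direct identification $C_{**}(D)=C_{**}(\mathrm{Sing}\circ D)$ shortens the paper's detour through $|D'|$. Your explicit handling of the horizontal (non-)normalization, and of the upgrade from quasi-isomorphisms to homotopy equivalences via degreewise freeness, fills in details the paper leaves implicit.
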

\begin{proof}
	Assume first that $D$ is a diagram in $\sset$. Then~\cite[Theorem~15.11.6]{hi-03} gives an isomorphism of simplicial sets $|\srep_\bullet D|\simeq \mathrm{diag}\,\srep_\bullet D$, and the singular chains of the latter are naturally homotopy equivalent to $\Tot C_{**}(D)$ by the Eilenberg-Zilber theorem. Assume now that $D$ is a diagram in $\top$. Let $D'$ be the diagram of simplicial sets given by the singular set of $D$, i.e., $D'(c)=\mathrm{Sing}_\bullet(D(c))$. Then $D$ is weakly equivalent to the diagram $|D'|\colon c\mapsto |D'(c)|$ given by geometric realization, hence it suffices to prove the result for $D=|D'|$. Note that if $QD'\to D'$ is a cofibrant replacement of $D'$ in $\sset^\Cc$ then by applying geometric realization one obtains a cofibrant replacement $|QD'|\to |D'|$ in $\top^\Cc$. Since geometric realization preserves colimits, this gives a weak homotopy equivalence
	\[
		\hoc |D'|\simeq |\hoc D'|\;.
	\]
	Therefore, the singular chain complex $C_*(\hoc |D'|;\k)$ is 
	naturally quasi-isomorphic to\\ $C_*(|\hoc D'|;\k)$. Since for all simplicial sets $X$ the adjunction unit $X\to \mathrm{Sing}_\bullet |X|$ is a weak homotopy equivalence, $C_*(|\hoc D'|;\k)$ is naturally quasi-isomorphic to $C_*(\hoc D';\k)$. We have already shown that the latter is naturally 
	quasi-isomorphic to $\Tot C_{**}(D')$. Finally, by using the adjunction unit  $X\to \mathrm{Sing}_\bullet |X|$ again, we obtain that the latter is naturally quasi-isomorphic to $\Tot C_{**}(|D'|)$.
\end{proof}

\begin{remark}\label{rem:upzigzag}
The zigzag in lemma~\ref{lm:abut} lifts to a zigzag of quasi-isomorphisms in the category of strictly coassociative DG coalgebras. 
The standard Alexander-Whitney diagonal on singular chains is coassociative but not strictly cocommutative. 
Therefore, applying the $\Q$-dual to this zigzag yields a statement for singular cochain complexes in the category of strictly associative DG algebras, but not commutative ones. 
\end{remark}

Thus, $C_{**}(D)$ is a first quadrant homological bicomplex of $\k$-modules whose total complex is quasi-isomorphic to $C_*(\hoc D;\k)$. We consider the spectral sequence $(E_{**}^r,d^r)_{r\ge 1}$ associated to its columnwise filtration. 
If $H_d(D)\colon\Cc\to \mathrm{Mod}_\k$ denotes the functor sending each $c$ to the singular homology of $D(c)$, then the first page $(E^1_{**}(D), d^1)$ is given by:
\begin{align*}
	&E^1_{p,q}(D)= \srep_p H_q(D)\;,\\
	&d^1:E^1_{p\,q}\to E^1_{(p-1)\,q} \text{ is induced by the simplicial structure of $\srep_\bullet H_q(D)$}\;.
\end{align*}
Hence the homology of $(E^1_{*q}(D),d^1)$ is precisely the homology $H_*(\Cc,H_q(D))$ of the category $\Cc$ with coefficients in $H_q(D)$~\cite[Appendix~C.10]{lo-98}. By dualizing $C_{**}(D)$ we obtain the first quadrant cohomological bicomplex $C^{**}(D)$ whose total complex is quasi-isomorphic to $C^*( \hoc D; \k)$, hence a spectral sequence whose second page is given by cohomology of the category $\Cc$ with coefficients in $H^q(D)$. The next proposition summarizes the situation.

\begin{proposition}[{\cite[Theorem~18.3]{du}}]\label{pr:ss}
	For all diagrams~$D$ of shape $\Cc$ in $\top$ or in $\sset$ there are first quadrant spectral sequences
	\[
		E^{2}_{p,q}(D)=H_{p}(\Cc;H_{q}(D;\k))\Rightarrow H_{p+q}(\hoc D;\k),
	\]
	\[
		E_{2}^{p,q}(D)=H^{p}(\Cc;H^{q}(D;\k))\Rightarrow H^{p+q}(\hoc D;\k).
	\]    
\end{proposition}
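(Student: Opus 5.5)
The plan is to obtain both spectral sequences from the bicomplex $C_{**}(D)$ constructed just above, using its columnwise filtration, and to identify the pages using Lemma~\ref{lm:abut}.

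\emph{Convergence.} The bicomplex $C_{**}(D)$ is first quadrant. Hence the spectral sequence of its column filtration converges strongly to $H_*(\Tot C_{**}(D))$. By Lemma~\ref{lm:abut}, this is naturally isomorphic to $H_*(\hoc D;\k)$.

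\emph{First page.} Take vertical homology first. The $p$-th column is $C_*(\srep_pD;\k)$, and $\srep_pD$ is a disjoint union. Singular chains take disjoint unions to direct sums, and homology commutes with direct sums. Therefore
\[
E^1_{p,q}=\bigoplus_{c_0\ot\cdots\ot c_p}H_q(D(c_p);\k)=\srep_pH_q(D).
\]
The horizontal differential is the alternating sum of the faces. By naturality of $H_q$, the induced $d^1$ is the alternating face map of the simplicial $\k$-module $\srep_\bullet H_q(D)$. Its homology is the standard simplicial-replacement (bar) complex computing $H_p(\Cc;H_q(D))$, as cited from \cite[Appendix~C.10]{lo-98}; equivalently, $\srep_\bullet F$ computes $\mathrm{Tor}^{\Cc}_*(\k,F)$ because it is built from free resolutions indexed by the nerve. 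This identifies $E^2$ as claimed.

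\emph{Cohomological version.} Dualize: set $C^{p,q}(D)=\Hm_\k(C_{q}(\srep_pD;\k),\k)$. Its totalization is $\Hm_\k(\Tot C_{**}(D),\k)$, because in each total degree there are only finitely many $(p,q)$, so the product equals the sum. This total complex is quasi-isomorphic to $C^*(\hoc D;\k)$. Here a zig-zag of chain homotopy equivalences dualizes to chain homotopy equivalences. For the zig-zag of quasi-isomorphisms of free complexes that occurs for $\top$, use that quasi-isomorphisms between bounded-below complexes of free (projective) $\k$-modules are homotopy equivalences. The columns are cochains $C^*(\srep_pD;\k)=\prod H$-type products. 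Vertical cohomology gives $\prod_{\sigma}H^q(D(c_p);\k)$, which is the cosimplicial replacement of the contravariant functor $H^q(D)$. Its cohomology is $H^p(\Cc;H^q(D))$.

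\emph{Main obstacle.} The delicate step is ensuring the zig-zag of Lemma~\ref{lm:abut} survives dualization. Every complex in it is a complex of free $\k$-modules: singular chains, and totalizations of such. Hence each quasi-isomorphism in the zig-zag is a chain homotopy equivalence, and $\Hm_\k(-,\k)$ preserves it.
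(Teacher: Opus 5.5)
Your proposal is correct and follows essentially the same route as the paper: column filtration of the first-quadrant bicomplex $C_{**}(D)$ built from the simplicial replacement, Lemma~\ref{lm:abut} for the abutment, the identification of $E^2$ with the homology of $\Cc$ via the simplicial-replacement complex (Loday, Appendix C.10), and dualization of $C_{**}(D)$ for the cohomological version. Your added observation fills in a step the paper leaves implicit. Every complex in the zig-zag is a bounded-below complex of free $\k$-modules, so each quasi-isomorphism is a homotopy equivalence and survives $\Hm_\k(-,\k)$.
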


\begin{remark}
	The spectral sequences of proposition~\ref{pr:ss} are the simplicial version of the spectral sequence of studied by Bousfield in~\cite{bo-87}. They go back at least to the work of Segal~\cite{se-68}, and they are a particular case of Mayer-Vietoris spectral sequence of a covering (MVSS, for short).
	To see this last point, assume that $\Cc=\Pp$ is a poset. 
	Every diagram of topological spaces $D\colon\Pp\to \top$ has a unique natural transformation to the constant diagram $*\colon\Pp\to \Top$ whose objects are single points. This natural transformation induces a continuous map $p\colon \hoc D \to \hoc *=|N_\bullet(\Pp)|$. The $0$-cells of the the latter are in one to one correspondence with the objects of $\Pp$, and for all objects $c$ we let
	\[
		\st^{\circ}(p):=\lb \sigma\in N_{\bullet}(\Pp)\colon \sigma=(p_{0}> \cdots> p_{n}),\ p_{n}=p\rb,
	\]
	be the \textit{open star} of $\Pp$. 
	The union of open simplices corresponding to $\st^{\circ}(p)$ in the realization $|N_{\bullet}(\Pp)|$ of the nerve is an open contractible subset.
	The topological space $\hoc D$ admits an open cover by $\lb U(c)\rb_{p\in \Pp}$, where
	\[
		U(p):=p^{-1}(\st^{\circ}(p))=\st^{\circ}(p)\times D(p),
	\]
	and this topological space is homotopy equivalent to $D(p)$.
	Notice that
	\[
		U(p_{0})\cap\cdots \cap 	U(p_{n})\neq\varnothing
	\]
	iff the elements $p_{i}$, $i=0,\dots,n$, are pairwise comparable in $\Pp$ (and therefore form a uniquely defined simplex in $N_{\bullet}(\Pp)$).
	Therefore, if the intersection is nonempty then it is homotopy equivalent to $D(p)$, where $p$ is the minimum of the elements $p_{0},\dots,p_{n}$.
	The nerve of the cover $\lb U(p)\rb_{p\in N_{\bullet}(\Pp)}$ coincides with the nerve $N_{\bullet}(\Pp)$ of $\Pp$.
	The respective \textit{Mayer-Vietoris spectral sequence} (e.g. see~\cite[\S 8]{bo-tu-82}) is the spectral sequence of the simplicial filtration on \v{C}ech complex
	\[
		\bigoplus\limits_{p_{0}>\dots>p_{n}\in 	\Pp}C_{q}(U(p_{0})\cap\cdots \cap U(p_{n}))\cong 
		\bigoplus\limits_{p_{0}>\dots>p_{n}\in \Pp}C_{*}(D(p_{n}))\cong C_{*,*}(D).
	\]
	Hence, MVSS of $D$ is isomorphic to $(E^*_{**}(D), d^*)$.
	(The cohomological spectral sequence case relates to the homological case in a similar way to the above.)
\end{remark}

We denote by $E^1_*(D)$ the chain complex obtained by totalizing the first page of the homological spectral sequence of proposition~\ref{pr:ss}. That is, $E^1_n(D)=\bigoplus_{p+q=n}\srep_p H_q(D)$ with differential $E^1_n(D)\to E_{n-1}^1(D)$ induced by the simplicial structure of $\srep_\bullet H_q(D)$. Similarly, we denote by $E_1^*(D)$ the chain complex obtained by totalizing the first page of the cohomological spectral sequence of proposition~\ref{pr:ss}. The following result gives an application of formality for diagrams of spaces.

\begin{proposition}\label{pr:collapse}
	Let $D$ be a diagram in $\top$ or in $\sset$. 
	\begin{enumerate}
		\item If the diagram of singular chain complexes $c\mapsto C_*(D(c);\k)$ is formal in $\mathrm{DGMod}_\k$, then the homology spectral sequence of proposition~\ref{pr:ss} collapses at page $2$. Actually, the singular chain complex $C_*( \hoc D;\k)$ is quasi-isomorphic to $E^1_*(D)$.
		\item If the diagram of singular cochain complexes $c\mapsto C^*(D(c);\k)$ is formal in $\mathrm{DGMod}_\k$, then the cohomology spectral sequence of proposition~\ref{pr:ss} collapses at page $2$. Actually, the singular cochain complex $C^*( \hoc D;\k)$ is quasi-isomorphic to $E_1^*(D)$.
	\end{enumerate}
\end{proposition}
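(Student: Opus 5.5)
The plan is to work entirely at the level of the bicomplex $C_{**}(D)$ of lemma~\ref{lm:abut}, treating it as a functorial construction applied to a diagram of chain complexes. Concretely, for any functor $M\colon\Cc\to\mathrm{DGMod}_\k$, let $\srep_{**}M$ be the bicomplex whose $p$-th column is $\bigoplus_{\sigma=c_0\ot\cdots\ot c_p} M(c_p)$. The horizontal differential is the alternating sum of the simplicial faces, with the last face acting through $M(c_p\to c_{p-1})$, and the vertical differential is that of $M$. With this notation, $C_{**}(D)=\srep_{**}C_*(D(-);\k)$. Moreover, $\Tot\srep_{**}H_*(D;\k)$, where $H_*(D;\k)$ is given the zero differential, is precisely $E^1_*(D)$.

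The key step is that $M\mapsto\Tot\srep_{**}M$ sends natural quasi-isomorphisms to quasi-isomorphisms. To see this, filter by columns. A natural transformation $f\colon M\to M'$ which is an objectwise quasi-isomorphism induces a map of first quadrant spectral sequences. On the $E^1$ page this map is $\bigoplus_\sigma H_*(f_{c_p})$, which is an isomorphism. By the comparison theorem for bounded-below column filtrations, $\Tot(f)$ is a quasi-isomorphism. Applying this to each arrow of the zig-zag provided by formality, we get a zig-zag of quasi-isomorphisms between $\Tot C_{**}(D)$ and $E^1_*(D)$. Composing with lemma~\ref{lm:abut} then gives $C_*(\hoc D;\k)\simeq E^1_*(D)$.

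Collapse at page $2$ follows next. Because the zig-zag consists of morphisms of bicomplexes compatible with the column filtrations, it induces isomorphisms of the column spectral sequences from $E^2$ onward; at $E^1$ the maps are already isomorphisms, and a morphism of spectral sequences that is an isomorphism on one page is an isomorphism on all later pages. For the bicomplex $\srep_{**}H_*(D)$, the vertical differential vanishes. This bicomplex is therefore a direct sum over $q$ of its rows, so its spectral sequence has $d^r=0$ for all $r\ge2$. Hence the spectral sequence of $D$ also collapses at page $2$.

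For (2) the argument is the same, but uses the cohomological bicomplex $C^{**}(D)$. Its $p$-th column is $\prod_\sigma C^*(D(c_p);\k)$, which is the cosimplicial replacement. One must check that products preserve quasi-isomorphisms (they do, since homology commutes with products of complexes). One also needs convergence of the first quadrant filtration, which holds because in each total degree only finitely many columns contribute.

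The main obstacle I anticipate is bookkeeping rather than depth. It consists of making sure that the bicomplex obtained from the singular chains really is $\srep_{**}$ applied to the diagram $c\mapsto C_*(D(c))$, so that it is functorial in the chain-level diagram. A second point is that in case (2) the cosimplicial replacement, rather than the dual of a possibly infinite direct sum, is the right object, and that its identification is handled by the dualization already used to obtain the cohomological spectral sequence in proposition~\ref{pr:ss}.
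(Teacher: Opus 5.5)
Your proposal is correct and follows the paper's proof. You apply (co)simplicial replacement to the formality zig-zag to get columnwise quasi-isomorphisms of bicomplexes between $C_{**}(D)$ and $E^1_{**}(D)$. This gives isomorphic spectral sequences, hence collapse, since $E^1_{**}(D)$ has zero vertical differential; totalizing and invoking lemma~\ref{lm:abut} gives the quasi-isomorphism. You also spell out details the paper leaves implicit: the comparison theorem for the column filtration, why zero vertical differential forces $d^r=0$ for $r\ge 2$, and the identification of the columns of $C^{**}(D)$ with the cosimplicial replacement (products) in case (2).
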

\begin{proof}
	We prove the homological statement (the proof of the cohomological statement is similar). Formality gives a zig-zag of quasi-isomorphisms for diagrams of chain complexes connecting $H_*(D)$ with $C_*(D;\k)$. Applying simplicial replacement to this zig-zag yields a zigzag of bicomplexes:
	\begin{equation}\label{eqn:zz}
		E^1_{**}(D)\ot \dots \to C_{**}(D) 
	\end{equation}
	where $E_{**}^1(D)$ is the bicomplex with horizontal differential equal to $d^1$, and trivial vertical differential. The morphisms of bicomplexes appearing in~\eqref{eqn:zz} are columnwise quasi-isomorphisms, hence they give rise to isomorphisms of spectral sequences. Hence, collapsing of the spectral sequence of the bicomplex $C_{**}(D)$ follows from collapsing of the spectral sequence of the bicomplex $E_{**}^1(D)$. Moreover, by totalizing the morphisms of bicomplexes in~\eqref{eqn:zz} and by using lemma~\ref{lm:abut}, we obtain a zig-zag of quasi-isomorphisms of chain complexes connecting $E_*^1(D)$ with $C_*( \hoc D;\k)$.
\end{proof}

\begin{remark}\label{rk:duality}
	If $\k$ is a field, then the $\k$-linear dual of any quasi-isomorphism in $\mathrm{DGMod}_\k$ is a quasi-isomorphism. Thus if the diagram of singular chain complexes $c\mapsto C_*(D(c);\k)$ is formal in $\mathrm{DGMod}_\k$, then the diagram of singular cochain complexes  $c\mapsto C^*(D(c);\k)$ is also formal in $\mathrm{DGMod}_\k$.
\end{remark}

\begin{remark}
	By taking into account the algebraic structures discussed in remark~\ref{rem:upzigzag}, the quasi-isomorphisms of proposition~\ref{pr:collapse} can be upgraded. In the homological case, if $c\mapsto C_{*}(D(c);\k)$ is formal in $\mathrm{ACDGAlg}_{\k}$ (as is the case for abelian EML spaces over $\Q$), then the quasi-isomorphism between $C_{*}(\hoc D;\k)$ and $E_*^1(D)$ holds in $\mathrm{ACDGAlg}_{\k}$ via the Pontryagin product.
	In the cohomological case, if $c\mapsto C^{*}(D(c);\k)$ is formal in the category of strictly associative DG algebras, the quasi-isomorphism between $C^{*}(\hoc D;\k)$ and $E^{*}_1(D)$ holds in the category of strictly associative DG algebras via the Alexander-Whitney map.
	In both cases, passing to (co)homology ensures that the spectral sequence collapses and yields a strict isomorphism of graded commutative algebras between the second page of the spectral sequence and rational (co)homology of $\hoc D$.
\end{remark}

The next theorem is an application of the formality results of section~\ref{sec-formality} together with the previous considerations. Recall~\cite[II~4.3]{bo-ka-72} that a pointed connected space is called \emph{nilpotent} if its fundamental group acts nilpotently on all the homotopy groups $\pi_i$ for $i>0$. Nilpotent spaces include simply connected spaces, topological groups, as well as spaces with abelian fundamental group and contractible universal cover.

\begin{theorem}\label{thm:rcollf}
	Let $n$ be a positive integer and let $D$ be a diagram in $\top_*$ or $\sset_*$. Assume that every object $D(c)$ of the diagram is a nilpotent space such that $\pi_iD(c)\otimes\Q =0$ for all $i\ne n$ and with abelian fundamental group.
	Then the two spectral sequences of proposition~\ref{pr:ss} collapse. 
	Actually the singular chain complex $C_*(\hoc;\Q)$ is quasi-isomorphic to $E^1_*(D)$ and the singular cochain complex $C^*(\hoc;\Q)$ is quasi-isomorphic to $E_1^*(D)$.
\end{theorem}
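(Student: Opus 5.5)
The plan is to reduce to proposition~\ref{pr:collapse} by replacing the diagram $D$ with a naturally weakly equivalent diagram of rational EML spaces. This uses functorial rationalization of nilpotent spaces: there is a functor $X\mapsto X_\Q$ with a natural map $X\to X_\Q$ inducing isomorphisms on $H_*(-;\Q)$. For instance, take the Bousfield--Kan $\Q$-completion $\Q_\infty X$ of~\cite{bo-ka-72}, which for nilpotent $X$ is a rationalization.

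First I would work with simplicial sets. Since the statement for $\top_*$ reduces to $\sset_*$ via $\mathrm{Sing}$, we may assume $D$ lands in $\sset_*$ (each $D(c)$ Kan after a functorial fibrant replacement). Then I apply $\Q_\infty$ objectwise to get a natural transformation $D\to D_\Q$ that is objectwise a rational homology equivalence. Because each $D(c)$ is nilpotent, we have $\pi_i(D_\Q(c))\simeq \pi_i(D(c))\otimes\Q$. Here the abelian fundamental group hypothesis is used: for $i=1$ the rationalization of the abelian group $\pi_1$ is simply $\pi_1\otimes\Q$. Hence $D_\Q$ is a diagram of EML spaces of height $n$.

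Next I compare the homotopy colimits. A natural map that is objectwise a $\Q$-homology equivalence induces a map of bicomplexes $C_{**}(D)\to C_{**}(D_\Q)$ (with $\Q$ coefficients). This map is a columnwise quasi-isomorphism, so it gives an isomorphism of the spectral sequences of proposition~\ref{pr:ss} from page $1$ on. It also gives, via lemma~\ref{lm:abut}, a quasi-isomorphism $C_*(\hoc D;\Q)\simeq C_*(\hoc D_\Q;\Q)$, and we have $E^1_*(D)\cong E^1_*(D_\Q)$. The same holds for cochains by dualizing.

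Finally, I would show that $D_\Q$ is formal. By proposition~\ref{pr:zigzagofdiagramsofEML} and theorem~\ref{thm:formQ} (with $\k=\Q$), the diagram $c\mapsto C_*(D_\Q(c);\Q)$ is formal in $\mathrm{ACDGAlg}_\Q$ or $\mathrm{CDGHopf}_\Q$, hence in $\mathrm{DGMod}_\Q$. The zigzag of objectwise weak equivalences in proposition~\ref{pr:zigzagofdiagramsofEML} induces natural quasi-isomorphisms on chains, so formality transfers along it. Remark~\ref{rk:duality} gives formality of cochains. Proposition~\ref{pr:collapse} then yields collapse and the stated quasi-isomorphisms.

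The main obstacle is step one: having a strictly functorial rationalization that is compatible with all maps of the diagram and whose homotopy groups are computed as above. Plain localization is only functorial up to homotopy, so I would rely on $\Q_\infty$ together with the nilpotent fibre lemma of~\cite{bo-ka-72} to identify its homotopy groups.
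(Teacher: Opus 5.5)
Your proposal is correct and follows the paper's proof. Both arguments use the strictly functorial Bousfield--Kan $\Q$-localization to replace $D$ by a diagram of EML spaces of height $n$, invoke Theorem~\ref{thm:formQ} (via Proposition~\ref{pr:zigzagofdiagramsofEML}) for formality over $\Q$, and finish with Proposition~\ref{pr:collapse} and Remark~\ref{rk:duality}. The only cosmetic difference is the direction of transfer: you apply Proposition~\ref{pr:collapse} to $D_\Q$ and then compare the bicomplexes of $D$ and $D_\Q$ through the columnwise quasi-isomorphism, whereas the paper first transfers formality back to $D$ along $D\to L_\Q D$.
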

\begin{proof}
	Since every diagram of nilpotent topological spaces can be converted into a diagram of pointed simplicial spaces with the same property on rational homotopy groups, we may assume that we work with pointed simplicial sets, i.e., we consider a diagram $D:\Cc\to \sset_*$. 
	Then~\cite[V 4.2]{bo-ka-72} provides a functorial $\Q$-localization, that is an endofunctor $L_\Q$ of nilpotent simplicial sets, together with a natural transformation $\eta_X:X\to L_\Q X$ which induces 
	an isomorphism $\pi_i(X)\otimes\Q \xrightarrow[]{\simeq}\pi_iL_\Q X$. In particular $\eta$ induces a natural transformation of diagrams $D\to L_\Q D$, such that for all objects $c$ of $\Cc$, the map $\eta_c:D(c)\to L_\Q D(c)$ induces a quasi-isomorphism on the level of singular chain complexes with coefficients in $\Q$. Moreover, $L_\Q D$ is a diagram of EML-spaces of height $n$, hence it is formal in $\mathrm{DGMod}_\Q$ by theorem~\ref{thm:formQ}. Thus the functor $\Cc\mapsto \mathrm{DGMod}_\Q$ defined by $c\mapsto C_*(D(c);\Q)$ is formal. 
	Now theorem~\ref{thm:rcollf} follows from proposition~\ref{pr:collapse} and remark~\ref{rk:duality}. 
\end{proof}

\begin{example}
	A diagram~$D\colon\Cc\to \Top$ is called a \emph{toric diagram}, if~$D(p)=(S^{1})^{d(p)}$ is a compact torus (with a fixed decomposition into product of $d(p)\in \Z_{\geq 0}$ circles) for any~$p\in \Cc$ and any arrow of~$D$ is a group homomorphism~\cite{we-zi-zi-99}.
	A toric diagram is a particular case of an EML diagram of height $1$ over $\Cc$.
\end{example}

\begin{remark}
	Let~$\alpha\colon D\to \kappa T$ be a morphism of toric diagrams, where $\kappa T$ denotes the constant diagram over $\Pp$ corresponding to a compact torus $T$.
	By using similar arguments (see theorem~\ref{thm:rcollf}) one proves the collapse of spectral sequence for $D$ in category of $H^*(BT)$-modules.
	However, this proof shows trivial abutment of such spectral sequence only for $\Q$-modules.
\end{remark}

\begin{remark}
	The height $2$ EML functorial formality in the category of bialgebras over $\Q$ implies formality in the category of $H^*(BT)$-modules.
	(After choosing a quasi-isomorphism $H^{*}(BT;\Q)\to C^*(BT;\Q)$.)
	This provides an alternative proof of Eilenberg-Moore spectral sequence collapse at page $2$ (with $\Q$ coefficients)~\cite[Theorem~4.22]{li-so-22}.
\end{remark}

\section{Acknowledgements}
The first-named author is grateful to M.~Franz for communicating an argument with non-formality over $\Z$ and $\Z/2\Z$, G.~Horel for explaining the derived approach to the problem of formality for diagrams and A.~Zakharov for drawing attention to his work on relevant mathematical problems.
G.S. would like to thank Universit\'e de Lille for its hospitality during his research visit in 2024, where the current project started.
The second-named author thanks D.~Tanr\'e for various discussions, and in particular for drawing his attention to the work of Lemaire and Sigrist~\cite{le-si-81}.

Grigory Solomadin was supported by DFG Walter Benjamin Fellowship [Deutsche Forschungsgemeinschaft (DFG) - Projektnummer 561158824].
Antoine Touz\'e acknowledges the support of the CDP C2EMPI, together with the French State under the France-2030 programme, the University of Lille, the Initiative of Excellence of the University of Lille, the European Metropolis of Lille for their funding and support of the R-CDP-24-004-C2EMPI project. 


\end{document}